\documentclass[12pt,reqno]{amsart}
\usepackage[T1]{fontenc}
\usepackage[utf8]{inputenc}
\usepackage[english]{babel}
\usepackage{fullpage}
\usepackage{times}
\usepackage{colonequals}
\usepackage{amsmath,amssymb,amsthm,mathtools,url}
\usepackage{xcolor}
\usepackage{comment}
\usepackage{bbm}
\usepackage{enumerate}
\usepackage{bm}
\usepackage{graphicx}
\usepackage{mathrsfs}
\usepackage[colorlinks=true,pdfstartview=FitH,linkcolor=blue,citecolor=blue,urlcolor=blue]{hyperref}
\usepackage{lineno}

\newtheorem{thm}{Theorem}[section]
\newtheorem{lem}[thm]{Lemma}
\newtheorem{prop}[thm]{Proposition}
\newtheorem{cor}[thm]{Corollary}

\theoremstyle{definition}
\newtheorem{defn}[thm]{Definition}

\newtheorem{rem}[thm]{Remark}

\newcommand{\Nm}{\operatorname{N}}
\newcommand{\Tr}{\operatorname{Tr}}
\newcommand{\F}{\mathbb{F}}
\newcommand{\cA}{\mathcal A}
\newcommand{\cD}{\mathcal D}
\newcommand{\cF}{\mathcal F}
\newcommand{\cH}{\mathcal H}

\newcommand{\cP}{\mathcal P}

\newcommand{\cM}{\mathcal M}
\newcommand{\Span}{\operatorname{span}}

\title{Intersecting families and nonvanishing multivariate polynomials over finite fields}
\author{Shamil Asgarli}
\address{Department of Mathematics \& Computer Science \\ Santa Clara University \\ Santa Clara, CA 95053 \\ United States}
\email{sasgarli@scu.edu}

\author{Bence Csajb\'{o}k}
\thanks{The work of Bence Csajb\'{o}k was supported by the J\'{a}nos Bolyai Research Scholarship of the Hungarian Academy of Sciences and partially by the National Research, Development and Innovation Fund -- grant numbers ADVANCED 153080, EXCELLENCE 151504 and SNN 152582.}
\address{Department of Computer Science \\ E\"otv\"os Lor\'and University \\ H-1117 Budapest, P\'azm\'any P.\ stny.\ 1/C\\ Hungary}
\email{bence.csajbok@ttk.elte.hu}

\author{Chi Hoi Yip}
\address{Department of Mathematics, Hong Kong University of Science and Technology, Clear Water Bay, Hong Kong}
\email{machyip@ust.hk}
\keywords{finite fields, nonvanishing polynomials, intersecting families, norm forms}
\subjclass[2020]{Primary 11T06; Secondary 11B30, 14G15, 05D05, 05C25}
\begin{document}

\begin{abstract}
Let $\mathcal{P}_{n,d}$ be the space of polynomials in $n$ variables over
$\mathbb{F}_q$ of degree at most $d$. Two polynomials
$f,g\in\mathcal{P}_{n,d}$ \emph{intersect} if
$f(\mathbf a)=g(\mathbf a)$ for some $\mathbf a\in\mathbb{F}_q^n$. A \emph{star} consists of all polynomials $f\in\mathcal{P}_{n,d}$ satisfying $f(\mathbf a)=b$ for fixed $\mathbf a\in\mathbb{F}_q^n$ and $b\in\mathbb{F}_q$. We completely classify the maximum intersecting families in $\mathcal{P}_{n,d}$. When $n=1$ and $d\geq 2$, it was previously shown that all maximum intersecting families are stars. We prove that the same conclusion holds for all $n\geq 2$ and $d\geq 2$ when $q$ is odd. When $q$ is even, however, the situation is more subtle, and a new phenomenon emerges: for $q\geq 4$, maximum non-star examples exist precisely when $d\leq n$. Along the way, we prove two further results of independent interest. First, we determine the span of nonvanishing polynomials in $\mathcal{P}_{n,d}$. Second, we characterize all linear functionals $\Psi\colon\mathcal{P}_{n,d}\to\mathbb{F}_q$ whose kernels are disjoint from the set of nonvanishing polynomials. The first result plays a crucial role in the proof of our main result; the second is a Gleason--Kahane--\.{Z}elazko theorem for polynomials of bounded degree over finite fields.
\end{abstract}

\maketitle


\section{Introduction}

Let $q$ be a prime power and $n,d\geq 1$. Define
\[
\cP_{n,d}=\F_q[x_1,\ldots,x_n]_{\leq d} \quad \text{and} \quad \cH_{n,d}=\F_q[x_1,\ldots,x_n]_{=d}.
\]
These are the spaces of polynomials of degree at most $d$ and of homogeneous degree-$d$ polynomials in $n$ variables, respectively. Recall that $\dim_{\F_q}\cP_{n,d}=\binom{n+d}{n}$.

Two polynomials $f$ and $g$ are said to \emph{intersect} if their graphs intersect, that is, $f(\mathbf{a})=g(\mathbf{a})$ for some $\mathbf{a}\in \F_{q}^{n}$. An \emph{intersecting family} of polynomials is a subset $\mathcal{F}\subseteq \cP_{n,d}$ such that any two elements of $\mathcal{F}$ intersect. More generally, a family is \emph{$r$-wise intersecting} if any $r$ of its (not necessarily distinct) members take a common value at some point of $\F_q^n$. For $\mathbf a\in \F_q^n$ and $b\in \F_q$, the family
\[
\{f\in\cP_{n,d}:f(\mathbf a)=b\}
\]
is called a \emph{star} centered at $(\mathbf{a},b)$. Every star in $\cP_{n,d}$ has size $q^{\binom{n+d}{n}-1}$. Note that $f$ and $f+c$ cannot belong to the same intersecting family for a nonzero constant $c$. As a result, every intersecting family has size at most $q^{\binom{n+d}{n}-1}$. We say that $\cP_{n,d}$ has the \emph{strict EKR property} if every intersecting family of this maximum size is a star. The main focus of our paper is to understand the extent to which the strict EKR property holds for $\cP_{n,d}$. Here EKR stands for Erd\H{o}s--Ko--Rado.

The classical Erd\H{o}s--Ko--Rado theorem classifies the largest intersecting families of uniform subsets \cite{EKR61}. There are many variants of the EKR theorem tailored for different domains; there are versions for permutations, vector spaces, lattices, and so on. In each case, the main EKR-type question asks whether every maximum intersecting family is a star; see the monograph \cite{GM16} by Godsil and Meagher for a comprehensive treatment of this subject. We briefly mention a few references closely related to our problem. For words over an alphabet of size at least three, maximum intersecting families are stars \cite{Livingston79,M82}; over the binary alphabet, maximum intersecting families need not be stars, while maximum 3-wise families are stars \cite{AY26}. In Hamming space these are anticode problems \cite{AK98}. Evaluating $f\in\cP_{n,d}$ on $\F_q^n$ gives a word, and two polynomial graphs intersect exactly when their words agree in a coordinate. The image is a generalized Reed--Muller code \cite{DGM70}. 

For graphs of one-variable polynomials, Adriaensen \cite{A22} proved the strict EKR theorem in $\cP_{1,d}$ for $d\geq 2$ and all $q$, and stability theorems were obtained by Adriaensen \cite{A22b} and by Aguglia, Csajb\'{o}k, and Weiner \cite{ACW24}. A recent preprint of Adriaensen \cite{Adriaensen25Codes} gives a further stability result and extends the setting to homogeneous bivariate polynomials and linear codes. Related Helly-type questions for bounded-degree hypersurfaces were studied by Deza and Frankl \cite{DF87}, while other polynomial EKR problems use common factors rather than common points of graphs \cite{GMM24,ST25}.

Our main theorem gives the complete strict EKR classification for
every number of variables and every degree.

\begin{thm}\label{thm:main}
Let $q$ be a prime power and let $n,d\geq1$.
\begin{enumerate}[(1)]
\item Every maximum intersecting family in $\cP_{n,d}$ is a star if
and only if one of the following holds:
\[
  q=2\text{ and }n=1;\qquad
  q\text{ is odd and }d\geq2;\qquad
  q\geq4\text{ is even and }d>n.
\]
\item Every maximum 3-wise intersecting family in $\cP_{n,d}$ is
a star, except when $q=2$, $d=1$, and $n\geq2$. In this exceptional
case, every maximum 4-wise intersecting family is a star.
\end{enumerate}
\end{thm}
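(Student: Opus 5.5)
The plan is to translate maximum intersecting families into linear-algebraic objects and then use the two auxiliary results announced in the abstract: the span of the nonvanishing polynomials, and the Gleason--Kahane--\.{Z}elazko-type characterization of functionals. Let $\cF$ be intersecting of size $q^{\binom{n+d}{n}-1}$. Because $f$ and $f+c$ never lie together in $\cF$ for $c\neq0$, $\cF$ meets every coset of the constants in exactly one element. So $\cF=\{g+\phi(g)\}$ for a function $\phi$ on $\cP_{n,d}/\F_q$. Intersection then reads: $f-g$ is never a nonzero constant function on $\F_q^n$, and more precisely $f-g$ takes the value $\phi$-difference somewhere.

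The first step is to show that $\cF$ is the set of $f$ with $\Psi(f)=b$ for some affine condition. I would use the standard Cayley-graph/clique argument: the "nonvanishing difference" graph on $\cP_{n,d}$ is a Cayley graph with connection set $N$, the nonvanishing polynomials. A maximum clique meeting each constant coset once should come from a linear functional $\Psi$ with $\Psi(1)=1$ whose kernel avoids $N$. Concretely, I would first show that differences $f-g$ with $f,g\in\cF$ never lie in $N$, i.e.\ $\cF$ is an independent set in that Cayley graph of size $|G|/q$, achieving the clique--coclique bound against the clique $\F_q$ (the constants). Equality in the clique--coclique bound for Cayley graphs forces the ratio/eigenvalue equality. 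Here the span of $N$ result enters: if $N$ spans all of $\cP_{n,d}$ (or a known large subspace), equality forces $\cF$ to be a coset structure determined by characters orthogonal to suitable sets. This yields $\cF=\{f:\Psi(f)=b\}$ with $\ker\Psi\cap N=\emptyset$. This is the step I expect to be hardest, particularly proving that $\cF$ is affine rather than merely a union. I would try Fourier analysis: the indicator of $\cF$ has Fourier support in characters $\chi$ trivial on $\ker$ of something, deduced from $\widehat{1_\cF}(\chi)\widehat{1_N}$-type equalities, and the span of $N$ controls which characters survive.

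Next I apply the GKZ-type classification. If every functional with kernel disjoint from $N$ is evaluation $f\mapsto f(\mathbf a)$ (up to scaling), then $\cF$ is a star. I would read off from that classification exactly when extra functionals exist. For $q$ odd and $d\ge2$, only evaluations occur. For $q$ even, functionals built from norm forms appear, e.g.\ quadratic-extension trace combinations: for $d\le n$, a degree-$\le d$ form can be anisotropic-type. This gives the non-star examples for $q\ge4$, $d\le n$, and explains their absence when $d>n$. The cases $d=1$ (affine functions, where any line-avoidance gives many non-star families for $n\ge2$) and $q=2$ would be handled directly. For $q=2$, $n=1$ the statement is trivial.

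For part (2), a 3-wise intersecting maximum family is in particular intersecting, so it is a star or one of the exceptional $\Psi$-families. For each non-evaluation $\Psi$, I would exhibit three members with no common value. One way is to take $g$, $g+h_1$, $g+h_2$ with $h_1,h_2\in\ker\Psi$ whose zero sets are disjoint; for $q=2$, $d=1$ an explicit parity example shows that 4 members are needed, giving the 4-wise statement.
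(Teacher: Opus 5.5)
The gap is your first step: the claim that equality in the clique--coclique (Hoffman) bound, combined with the span of the set $N$ of nonvanishing polynomials, forces $\cF=\{f:\Psi(f)=b\}$ for a linear functional $\Psi$. This is false in general. The following families are all maximum intersecting and not affine:
the degree-one families $\{\mathbf{a}\cdot\mathbf{x}+\varphi(\mathbf{a})\}$ of Lemma~\ref{lem:degree-one} with $\varphi$ non-affine; many unions of one class $\Lambda_{\epsilon}$ from each complementary pair over $\F_2$ with $n\geq2$; and the families of Proposition~\ref{prop:even-exceptional-classification} with varying in-centers $\mathbf{s}_{\omega}$. In the last case $N$ spans the large subspace $\ker\Omega_{n,d}$, so your ``known large subspace'' variant fails too.

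What Hoffman equality actually gives is a Fourier-support condition: $\widehat{1_{\cF}}$ vanishes off the trivial character and the characters $\chi_{L}$ with $\ker L\cap N=\emptyset$. Even if every such $L$ is a multiple of an evaluation, this only says that $1_{\cF}$ is a sum of functions of single values $f(\mathbf{a})$. Showing that such a $0/1$ function of density $1/q$ is a single star is a separate rigidity problem, and it is precisely the content of the theorem. Degree one makes this vivid: every $\Psi$ on $\cP_{n,1}$ with $\Psi(1)=1$ is $f\mapsto f(\mathbf{s})$ with $s_i=\Psi(x_i)$. So your GKZ input holds trivially there, yet strict EKR fails whenever $(q,n)\neq(2,1)$. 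Connectivity alone does not supply the rigidity either. The Hamming graph $H(3,q)$, $q\geq4$, is a connected Cayley graph with scalar-closed connection set attaining the ratio bound, but its maximum cocliques are the graphs $\{(x,y,L(x,y))\}$ of arbitrary Latin squares $L$. Already for $\cP_{1,2}$ the needed rigidity is Adriaensen's Theorem~\ref{thm:univariate}.

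There is also a circularity: the paper deduces Theorem~\ref{thm:zero-detecting} \emph{from} Theorem~\ref{thm:main}, so you would need an independent proof of it. Finally, your account of the even case is backwards. Norm forms are used to \emph{construct} nonvanishing polynomials, and so they rule functionals out. The extra functionals $f(\mathbf{a})+\sum_I\mu_I[x_I]f$ for $d\leq n$ exist because of Lemma~\ref{lem:parity}, which guarantees a zero for every polynomial of degree at most $d$ in $d$ variables with nonzero $x_1\cdots x_d$-coefficient.

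The paper avoids linearization in part~(1) altogether. The exceptional ranges are settled by direct constructions: the transversal structure in degree one, the $\Lambda_{\epsilon}$-pairing over $\F_2$, and Lemma~\ref{lem:parity} for even $q\geq4$ and $2\leq d\leq n$. The star cases are proved by induction in three stages:
slice $\cF$ along cosets of $\cP_{n,d-1}$ or of $\cP_{n-1,d}$; make each slice a star by induction; then synchronize the in-centers and values of the slices using nonvanishing polynomials. The key ones are $H+R$, with $H$ a proper norm form and $R$ a lower-degree completion taking a prescribed value at a point (Lemmas~\ref{lem:nonvanishing-completion} and~\ref{lem:star-slice-synchronization}). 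The base cases are Theorem~\ref{thm:univariate}, a separate argument for $\cP_{2,2}$ with $q$ odd, and, for even $q$, the case $\cP_{n,n+1}$ via Moore determinants and the Moisio--Wan trace--norm estimate.

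Your part~(2) inherits the false dichotomy ``star or $\Psi$-family''. It therefore never confronts the non-affine maximum intersecting families; for example, it would have to show that $\{\mathbf{a}\cdot\mathbf{x}+\varphi(\mathbf{a})\}$ with non-affine $\varphi$ is not 3-wise intersecting when $q\geq3$. Also, an example showing that three members do not suffice when $q=2$, $d=1$ does not prove that maximum 4-wise families are stars. The paper proves (2) directly. For $q\geq3$, after normalizing so that $0,x_1,\ldots,x_n\in\cF$, a maximum 3-wise family is an $\F_q$-subspace (using nonzero $\alpha,\beta,\gamma$ with $\alpha+\beta+\gamma=0$). It is closed under multiplication by polynomials within degree $d$, hence contains every nonconstant monomial. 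Over $\F_2$ with $d\geq2$, a Boolean closure lemma plays the same role. For $q=2$, $d=1$, the 4-wise condition on the members indexed by $0,\mathbf{a},\mathbf{b},\mathbf{a}+\mathbf{b}$ makes the selector $\varphi$ additive.
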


We also classify maximum intersecting families in every exceptional
range. In degree one, they are parametrized by arbitrary functions on
the coefficient space; over $\F_2$, they arise by choosing one evaluation class from each complementary pair. In even characteristic with $2\leq d\leq n$, new intersecting families arise from the fact that two polynomials are always intersecting if their coefficients differ on some squarefree monomial of degree $d$. This result is achieved by a lemma in the spirit of the Chevalley--Warning theorem (see Lemma~\ref{lem:parity}).

For $n=1$, to establish the strict EKR theorem, previous works~\cite{A22, A22b, ACW24} first established the case $d=2$ using various tools from finite geometry, algebraic graph theory, and number theory. They then completed the proof via an induction on $d$ based on a simple double counting argument. However, this approach does not extend directly to $n\geq 2$. We instead use norm forms and spanning results for nonvanishing polynomials (Section~\ref{sec:span}), together with inductions on both $d$ and $n$ (Section~\ref{subsec:lifting}); the base cases depend on the parity of $q$.

A polynomial $f\in\cP_{n,d}$ is \emph{nonvanishing} if $f(\mathbf{a})\neq 0$ for all $\mathbf{a}\in \F_{q}^{n}$. A family $\mathcal{F}$ is intersecting precisely when no difference $f-g$ (with $f,g\in\mathcal F$) is nonvanishing. To prove Theorem~\ref{thm:main}, the key insight is that there are enough nonvanishing polynomials so that the structure of $\mathcal{F}$ is constrained. It is useful to consider the following subspace $W_{n,d}$ of polynomials:
\[
W_{n,d}=\operatorname{span}_{\F_q}\{f \in\cP_{n,d} : f \text{ is nonvanishing}\}.
\]
The next theorem determines $W_{n,d}$. We use the shorthand $x_{I}=\prod_{i\in I}x_i$ for any subset $I\subseteq [n]=\{1,2,\dots, n\}$, and we write $[x_I]f$ for the coefficient of the monomial $x_{I}$ in $f$.

\begin{thm}\label{thm:span}
Let $n\geq1$ and $d\geq2$. Then
\begin{enumerate}[(1)]
\item If $q=2$, then
\[
W_{n,d} =\{f\in\cP_{n,d}:f\text{ represents a constant function on }\F_2^n\}.
\]
\item If $q\geq4$ is even and $2\leq d\leq n$, then 
\[
W_{n,d}=\{f\in\cP_{n,d}: [x_I]f = 0 \text{ for all subsets } I \text{ of size $d$}\}.
\]
\item If $q$ is odd, or if $q\geq4$ is even and $d>n$, then 
\[
W_{n,d}=\cP_{n,d}.
\]
\end{enumerate}
\end{thm}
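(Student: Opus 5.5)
Each case needs two inclusions: first show that $W_{n,d}$ lies inside the claimed subspace, then show that enough nonvanishing polynomials exist to span it.

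\emph{Upper bounds.} Case (1) is immediate. Over $\F_2$ a nonvanishing polynomial takes the value $1$ everywhere, so every element of $W_{n,d}$ is a sum of polynomials representing the constant function $1$. Such a sum represents a constant function. For case (2), take $q$ even and $|I|=d\le n$. I will prove a Chevalley--Warning type parity lemma (the lemma the introduction calls Lemma~\ref{lem:parity}): for every $f\in\cP_{n,d}$ with $[x_I]f\neq 0$, $f$ has a zero. Fix $\mathbf a$ outside $I$, restrict $f$ to the coordinates in $I$, and count solutions. Use
\[
\sum_{\mathbf x}(1-f(\mathbf x)^{q-1})=N(f)\pmod p .
\]
In characteristic $2$ the power $f^{q-1}$ should expand via the Frobenius structure, $q-1=1+2+\cdots+2^{m-1}$. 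The goal is to show that the coefficient of $\prod_{i\in I}x_i^{q-1}$ in $f^{q-1}$ equals $\prod_{j}([x_I]f)^{2^j}\ne 0$ up to a harmless factor. Concretely, $f^{q-1}=\prod_{j}f^{(2^j)}(x^{2^j})$, and the only way to reach the top monomial $x_i^{q-1}$ in each variable is to take $x_I$ in every factor. The degree bound $d=|I|$ rules out any other choice. This gives $N(f)\not\equiv 0\pmod 2$, so $f$ has a zero. Hence every nonvanishing $f$ has $[x_I]f=0$, which gives $W_{n,d}$ inside the claimed subspace.

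\emph{Lower bounds.} These are the real work. For $n=1$ and $d\ge2$ I want to show $W_{1,d}=\cP_{1,d}$ for odd $q$ and for even $q\ge4$. Start from quadratics $x^2+bx+c$ with nonsquare discriminant: over odd $q$ these, together with their scalar multiples and constants, span $\cP_{1,2}$. Over even $q\ge4$, use the quadratics $x^2+bx+c$ with $b\ne0$ and $\Tr(c/b^2)=1$, which are irreducible, and vary $b$ to get the linear term. For larger $d$, multiply: if $g\in W$ then $\ell g$ is nonvanishing whenever $\ell$ is nonvanishing, and translates $g(x+t)$ are nonvanishing. For several variables, use norm forms $\Nm(x_1\omega_1+\dots+x_n\omega_n)$ from $\F_{q^n}$ (or from $\F_{q^k}$ in $k\le n$ variables). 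These are degree-$k$ forms vanishing only at $0$, so adding $1$ or other constants makes them nonvanishing. Apply affine changes of variables and take products with univariate nonvanishing polynomials.

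The plan is a double induction on $n$ and $d$. First, polynomials depending on one linear form $\ell(\mathbf x)$ lie in $W$ by the $n=1$ case, since the composition of a nonvanishing univariate polynomial with $\ell$ is nonvanishing. Powers $\ell^k$ of linear forms, for $k\le d$, span all of $\cP_{n,d}$ when $p>d$. In small characteristic they only span a subspace, and the missing monomials must be supplied by norm forms and products. For odd $q$ and $d=2$, any quadratic form is a combination of nondegenerate anisotropic-shifted forms like $Q+c$. Here $Q$ is anisotropic in $2$ variables (a norm form), and $Q+c$ is nonvanishing when $c$ is chosen so that $-c$ is not represented… but anisotropic binary forms represent every nonzero value. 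So instead use $Q(\mathbf x)+\ell(\mathbf x)+c$, completing the square, with $x_1^2-\nu x_2^2$ in an affine shift plus a constant $c$ that is a nonsquare times a specific value. I would settle this by a direct computation of the span of all degree-$\le2$ polynomials in $\le2$ variables of the form $N(\text{affine})\cdot\lambda+\mu$ that avoid $0$.

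The main obstacle is the even case with $d>n$, and in the range of case (2), showing that every monomial other than the $x_I$ with $|I|=d$ is reached. Non-squarefree monomials such as $x_1^2x_2$ must be obtained from products like $(x_1^2+x_1+c)\cdot h$ together with norm-form products. I would first handle $n=2$ exhaustively, then lift to general $n$ by substituting $x_n\mapsto$ constants and by multiplying by nonvanishing univariate polynomials in $x_n$.
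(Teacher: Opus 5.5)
The upper bounds in your proposal are fine. The gap is in the lower bounds for (2) and (3) whenever $p\le d$, which includes all of even characteristic, and the tools you list cannot close it.

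\textbf{What works.} The Frobenius factorization $f^{q-1}=\prod_j f^{2^j}$ is a tidy alternative to the paper's Kummer-theorem proof of Lemma~\ref{lem:parity}. It does need the step behind ``the degree bound rules out any other choice'': for a contributing choice $\sum_j 2^j\boldsymbol\beta_j=(q-1)\mathbf 1$, total degree forces $|\boldsymbol\beta_j|=d$ in every factor, reducing mod $2$ then forces $\boldsymbol\beta_0=\mathbf 1$, and you peel off and repeat. You never state the reverse inclusion in (1), but it is one line: if $g$ represents $0$, then $g=(1+g)+1$. Your one-linear-form argument does handle odd $q$ with $p>d$.

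\textbf{Why your tools fail when $p\le d$.}
\begin{itemize}
\item Adding a constant to a norm form does not make it nonvanishing. If $\omega_1,\dots,\omega_k$ is a basis of $\F_{q^k}$, then $\Nm(\omega_1x_1+\cdots+\omega_kx_k)$ attains every value of $\F_q$, because the norm is surjective.
\item Products cannot raise the degree by one. Nonconstant affine-linear polynomials have zeros, so every nonconstant nonvanishing factor has degree at least $2$. Already for $n=1$, $d=3$ you need irreducible cubics, not products.
\item Composition with an affine form $\ell$ only yields top forms $c\,\ell_0^k$.
\end{itemize}
Concretely, for $q=3$, $n=2$, $d=3$, every cubic you can build has top form in $\Span\{x_1^3,x_2^3\}$, since $(ax_1+bx_2)^3=a^3x_1^3+b^3x_2^3$. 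For even $q$, $n=2$, $d=4$, every quartic you can build has a square as its top form. This is because nonvanishing quadratics in two variables have no $x_1x_2$ term by your own parity lemma, and squares form an $\F_q$-subspace in characteristic $2$. So $x_1^2x_2$, respectively $x_1^3x_2$, is never reached, although $W_{2,3}=\cP_{2,3}$ and $W_{2,4}=\cP_{2,4}$. Separately, substituting constants for $x_n$ goes the wrong way for a lower bound.

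\textbf{The missing idea} is to shift \emph{inside} the norm. If $t\notin\Span_{\F_q}\{\alpha_1,\dots,\alpha_n\}$, then $\Nm(t+\alpha_1x_1+\cdots+\alpha_nx_n)$ is nonvanishing. Its top form is the proper norm form $H_{\boldsymbol\alpha}$ and the remainder lies in $\cP_{n,d-1}$ (Lemma~\ref{lem:nonvanishing-completion}). Induction on $d$ therefore reduces (3) to showing that proper norm forms span $\cH_{n,d}$ when $q$ is odd or $d>n$ (Proposition~\ref{prop:proper-norm-generation}). The ingredients are:
\begin{itemize}
\item All norm forms span $\cH_{s,d}$, because the coefficient polynomials $c_{\mathbf e}$ have disjoint monomial supports and the grid lemma applies.
\item For $d>n$, every norm form is proper.
\item For odd $q$, the term $2t^{1+q}\prod_{i\ge3}\alpha_i^{q^{i-1}}$, coming from parameters $(t,t,\alpha_3,\dots,\alpha_d)$, supplies the squarefree monomials.
\end{itemize}
For even $q$ the induction must start at $d=n+1$. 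There the norms $\Nm_{\F_{q^{n+1}}/\F_q}(\alpha_0+\alpha_1x_1+\cdots+\alpha_nx_n)$, taken over ordered bases, span $\cP_{n,n+1}$ by Schwartz--Zippel and homogenization (Lemma~\ref{lem:affine-basis-norm-span}). Case (2) then follows by applying this inside each set of $d-1$ variables. This covers everything needed, since every monomial other than $x_I$ with $|I|=d$ involves at most $d-1$ variables.
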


We remark that Theorem~\ref{thm:span} alone is not enough to prove Theorem~\ref{thm:main}. It turns out that we need a stronger version (see Section~\ref{sec:span}) that is slightly more technical. Nevertheless, for $d\geq 2$ and $q\geq 3$, the strict EKR property holds precisely when $W_{n,d}=\cP_{n,d}$.

We also classify the linear functionals on $\cP_{n,d}$ whose kernels are disjoint from the set of nonvanishing polynomials in $\cP_{n,d}$. More precisely, we call an $\F_q$-linear functional $\Psi\colon \cP_{n,d}\to \F_q$ \emph{zero-detecting} if
\[
\Psi(f)=0 \quad\Longrightarrow\quad f\text{ has a zero in }\F_q^n.
\]    
Since $1\in \cP_{n,d}$ has no zero, $\Psi(1)\neq 0$. After normalizing, we may assume that $\Psi(1)=1$. 
Note that $\Psi$ is zero-detecting with $\Psi(1)=1$ if and only if for each $f\in \cP_{n,d}$ there exists $\mathbf{a}\in \F_q^n$ such that $\Psi(f)=f(\mathbf{a})$. Indeed, if $\Psi(f)=b$, then $\Psi(f-b)=\Psi(f)-b\Psi(1)=0$, so the zero-detecting property gives $f(\mathbf a)-b=0$ for some $\mathbf a\in\F_q^n$; the converse follows by taking $b=0$.

Observe that the evaluation map $\operatorname{ev}_{\mathbf{a}}\colon \cP_{n,d}\to \F_q$ given by $\operatorname{ev}_{\mathbf{a}}(f)=f(\mathbf{a})$ is an example of a zero-detecting functional. As a consequence of Theorem~\ref{thm:main}, the next result shows that these evaluation maps turn out to be the only (normalized) examples in the case where $q$ is odd, or where $q\geq 4$ is even and $d>n$; however, there are additional functionals when $q\geq 4$ is even and $d\leq n$.

\begin{thm}\label{thm:zero-detecting}
Let $n\geq1$ and $d\geq2$, and let
$\Psi\colon \cP_{n,d}\to \F_q$ be $\F_q$-linear with $\Psi(1)=1$.
\begin{enumerate}[(1)]
\item Let $q\geq 3$. If $q$ is odd or $d>n$, then $\Psi$ is zero-detecting if and only if there is $\mathbf a\in \F_q^n$ such that
\[
  \Psi(f)=f(\mathbf a).
\]
\item If $q\geq 4$ is even and $d\leq n$, then $\Psi$ is zero-detecting
if and only if
\[
  \Psi(f)
  =f(\mathbf a)
   +\sum_{I\in\binom{[n]}{d}}\mu_I \cdot [x_I]f
\]
for some $\mathbf a\in \F_q^n$ and
$\mu_I\in \F_q$.

\item If $q=2$, then $\Psi$ is zero-detecting if and only if there is a subset $S\subseteq\F_2^n$ of odd size such that
\[
  \Psi(f)=\sum_{\mathbf a\in S}f(\mathbf a).
\]
\end{enumerate}
\end{thm}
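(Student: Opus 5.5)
The plan is to translate zero-detecting functionals into maximum intersecting families, and then read off parts (1) and (2) from Theorem~\ref{thm:main} and its accompanying classification. Part (3) is handled directly by linear algebra over $\F_2$.

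\textbf{The bridge.} Let $\Psi$ be linear with $\Psi(1)=1$ and put $H=\ker\Psi$. If $\Psi$ is zero-detecting, then for $f,g\in H$ we have $f-g\in H$, so $f-g$ has a zero. Hence $H$ is an intersecting family. Since $|H|=q^{\binom{n+d}{n}-1}$, it is a maximum one.

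\textbf{Part (1).} Here $q\ge3$, so under the hypotheses ($q$ odd and $d\ge2$, or $q\ge4$ even and $d>n$) Theorem~\ref{thm:main}(1) applies. It says $H$ is a star. Since $0\in H$, this star is centered at some $(\mathbf a,0)$, so $H=\ker \operatorname{ev}_{\mathbf a}$. Two functionals with the same kernel are proportional, and both take the value $1$ at the constant $1$, so $\Psi=\operatorname{ev}_{\mathbf a}$. The converse direction is trivial.

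\textbf{Part (2).} First I strip off the squarefree top-degree part. Put $\mu_I=\Psi(x_I)$ for $I\in\binom{[n]}{d}$ and define
\[
\Psi'(f)=\Psi(f)-\sum_I \mu_I\,[x_I]f .
\]
I claim $\Psi'$ is still zero-detecting.
\begin{itemize}
\item If some $[x_I]f\neq0$, then $f$ has a zero by the parity lemma (Lemma~\ref{lem:parity}). This is exactly the fact that such $f$ are never nonvanishing, cf.\ Theorem~\ref{thm:span}(2).
\item Otherwise $\Psi'(f)=\Psi(f)$, and zero-detection is inherited from $\Psi$.
\end{itemize}
Moreover $\Psi'(x_I)=0$ for all $I$, and $\Psi'(1)=1$.

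Next, $\ker\Psi'$ is a maximum intersecting family that contains every $x_I$. I would invoke the paper's classification of maximum intersecting families for $q\ge4$ even and $2\le d\le n$. I expect it to say that such a family is a star up to arbitrary modification of the coordinates $[x_I]f$. Applied to $\ker\Psi'$, this would give $\Psi'(f)=f(\mathbf a)+\sum_I \nu_I[x_I]f$ for some $\mathbf a$ and some $\nu_I$, and hence the claimed form for $\Psi$.

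This is the step I expect to be the main obstacle, because the classification result is only announced at this point in the paper. If it is not directly usable, I would argue on the restriction of $\Psi'$ to $U=\{f:[x_I]f=0\ \forall I\}$, using the stronger spanning statement of Section~\ref{sec:span}.

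For the converse in part (2), suppose $\Psi$ has the stated form and $\Psi(f)=0$. If all $[x_I]f=0$, then $f(\mathbf a)=0$. Otherwise $f$ has a zero by the parity lemma.

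\textbf{Part (3).} Over $\F_2$, a polynomial is nonvanishing exactly when it represents the constant function $1$. These polynomials form the coset $1+K$, where $K$ is the subspace of elements of $\cP_{n,d}$ representing the zero function. Using $\Psi(1)=1$, $\Psi$ is zero-detecting iff $\Psi(1+k)\ne0$ for all $k\in K$, iff $\Psi|_K=0$. So $\Psi$ factors through the function space $V\subseteq\F_2^{\F_2^n}$ consisting of the functions represented by $\cP_{n,d}$.

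Every linear functional on the subspace $V$ extends to a functional on all of $\F_2^{\F_2^n}$. Every such functional has the form $\phi\mapsto\sum_{\mathbf a\in S}\phi(\mathbf a)$ for some subset $S\subseteq\F_2^n$. The normalization $\Psi(1)=1$ forces $|S|$ to be odd.

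Conversely, any such sum over a set $S$ kills $K$ and takes the value $|S|\equiv1$ at the constant $1$, so it is zero-detecting.
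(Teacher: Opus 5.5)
Part (1), the converse in Part (2), and all of Part (3) are correct. Part (1) is exactly the paper's argument. In Part (3) you reach the key fact $\Lambda_0\subseteq\ker\Psi$ directly: nonvanishing polynomials over $\F_2$ form the coset $1+\Lambda_0$, so with $\Psi(1)=1$ zero-detection is equivalent to $\Psi|_{\Lambda_0}=0$. The paper instead extracts $\Lambda_0\subseteq\ker\Psi$ from Proposition~\ref{prop:F2-pairwise}. From there both arguments factor $\Psi$ through the space of represented functions in the same way, and yours is a bit more direct.

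The gap is in the forward direction of Part (2). Everything there rests on a classification you only guess, and the guess is false as a statement about maximum intersecting families. By Proposition~\ref{prop:even-exceptional-classification}, a maximum family is a union over the fibers $\Omega_{n,d}^{-1}(\omega)$ of stars whose in-centers $\mathbf s_\omega$ may vary arbitrarily with $\omega$, not only their values $c_\omega$. So in general there is no single center $\mathbf a$.

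The conclusion you want for the linear family $\ker\Psi'$ is nevertheless true, but only through the $\omega=0$ fiber. This is Proposition~\ref{prop:even-fiber-rigidity}: every maximum intersecting family in $\mathcal K_{n,d}=\ker\Omega_{n,d}$ is a star. Since $1\in\mathcal K_{n,d}$, the set $\ker(\Psi|_{\mathcal K_{n,d}})$ is such a family, and it contains $0$. Hence $\Psi|_{\mathcal K_{n,d}}=\operatorname{ev}_{\mathbf a}|_{\mathcal K_{n,d}}$ for some $\mathbf a$. Splitting $f=\bigl(f-\sum_I([x_I]f)\,x_I\bigr)+\sum_I([x_I]f)\,x_I$ then gives the formula with $\mu_I=\Psi(x_I)-x_I(\mathbf a)$. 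This is the paper's proof, and your passage to $\Psi'$ is harmless but unnecessary.

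Your fallback does not close the hole. Spanning by nonvanishing polynomials (Theorem~\ref{thm:span}(2) or its norm-form refinements) only synchronizes the in-centers of slices already known to be stars, as in Lemma~\ref{lem:star-slice-synchronization}. It cannot produce the star structure itself. The indispensable input is the strict EKR property of $\cP_{d-1,d}$ for even $q\geq4$ (Theorem~\ref{thm:even-supercritical}). That property rests on the Moore-determinant and trace--norm argument of Proposition~\ref{prop:critical-even}, and is transported to $\mathcal K_{n,d}$ through the slices $U_S\cong\cP_{d-1,d}$.
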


One motivation for the above theorem comes from the celebrated Gleason--Kahane--\.{Z}elazko (GKZ) theorem \cite{Gle67, KZ68} from functional analysis. It states that a normalized linear functional on a unital complex Banach algebra that is nonzero on every invertible element must be multiplicative. GKZ-type theorems have appeared extensively in other settings. For instance, Mashreghi, Ransford and Ransford \cite{MRR18} proved a Dirichlet space analogue of GKZ: a linear functional on a Dirichlet space that is nonzero on every nowhere-vanishing function must be a multiple of a point evaluation. Mashreghi and Ransford~\cite{MR15} established a similar result for a Hardy space where the condition is on outer functions (instead of nowhere-vanishing functions). Similarly, our Theorem~\ref{thm:zero-detecting} can be viewed as a finite field analogue of the GKZ theorem. It turns out that Theorem~\ref{thm:zero-detecting} is equivalent to classifying $\F_q$-linear maximum intersecting families; indeed, the kernel of a zero-detecting functional is precisely an $\F_q$-linear maximum intersecting family in $\cP_{n,d}$.

\medskip

We interpret our theorems within a graph-theoretic framework. Let $S$ denote the set of all nonvanishing polynomials in $\cP_{n,d}$; evidently, $S$ is closed under multiplication by nonzero scalars. Consider the Cayley graph $G=\operatorname{Cay}(\cP_{n,d}, S)$ with the vertex set $\cP_{n,d}$ such that two polynomials $f$ and $g$ are adjacent exactly when $f-g\in S$; this is equivalent to $f$ and $g$ not intersecting. An intersecting family in $\cP_{n,d}$ corresponds to an independent set in $G$. Theorem~\ref{thm:main} determines precisely when all maximum
independent sets in $G$ correspond to stars. Theorem~\ref{thm:span} determines exactly when $G$ is connected (because connectedness is equivalent to $W_{n,d}=\cP_{n,d}$). When $G$ is not connected, the strict EKR property does not hold, which can be seen by putting together maximum independent sets from each connected component (unless $q=2$ and $n=1$). Theorem~\ref{thm:zero-detecting} characterizes all linear proper colorings $\cP_{n,d}\to \F_q$ of the graph $G$. Since $G$ contains a clique of size $q$ consisting of constant polynomials, the chromatic number of $G$ is exactly $q$. 

\medskip 

The paper is organized into six sections. Section~\ref{sec:preliminaries}
contains the elementary tools used throughout. Section~\ref{sec:remaining-cases} establishes Theorem~\ref{thm:main} (2). Section~\ref{sec:span} studies norm forms, nonvanishing polynomials, and their spanning properties, ultimately proving Theorem~\ref{thm:span}. Section~\ref{sec:qge3-pairwise} determines maximum intersecting families when $q\geq 3$ and $d\geq 2$, completing the proof of Theorem~\ref{thm:main}. Finally, we prove Theorem~\ref{thm:zero-detecting} in Section~\ref{sec:GKZ}.

\medskip

\textbf{Notation.}  Throughout the paper, we adopt the following standard notation. We work with polynomials in $\F_q[x_1,\ldots, x_n]$. We write $\mathbf{x}=(x_1,\ldots, x_n)$. For a multi-index $\boldsymbol{\alpha}=(\alpha_1, \ldots, \alpha_n)$, we write $\mathbf{x}^{\boldsymbol{\alpha}}=x_1^{\alpha_1} \cdots x_n^{\alpha_n}$ and $|\boldsymbol{\alpha}|=\alpha_1+\cdots+\alpha_n$. Given a polynomial $f\in \F_q[x_1,\ldots, x_n]$, we write $[\mathbf{x}^{\boldsymbol{\alpha}}]f$ for the coefficient of the monomial $\mathbf{x}^{\boldsymbol{\alpha}}$ in $f$. We represent squarefree monomials as $x_I=\prod_{i\in I} x_i$ where $I\subseteq [n]=\{1,2,\ldots, n\}$.

Given a field extension $\F_{q^m}/\F_q$, the norm $\Nm_{\F_{q^m}/\F_q}\colon \F_{q^m}\to \F_q$ and the trace $\operatorname{Tr}_{\F_{q^m}/\F_q}\colon \F_{q^m}\to \F_q$ are defined respectively by
\begin{equation*}
\Nm_{\F_{q^m}/\F_q}(x)=\prod_{j=0}^{m-1} x^{q^j},
\qquad \operatorname{Tr}_{\F_{q^m}/\F_q} (x)=\sum_{j=0}^{m-1} x^{q^j}.
\end{equation*}

\section{Preliminaries}\label{sec:preliminaries}

In this section, we collect several preliminary lemmas used throughout the paper.

\begin{lem}\label{lem:normalization}
Let $r\geq2$, let $\mathbf a\in \F_q^n$, and let $h\in\cP_{n,d}$. The map $T\colon\cP_{n,d}\to\cP_{n,d}$ defined by
\[
f(\mathbf x) \mapsto f(\mathbf x+\mathbf a)-h(\mathbf x)
\]
is a bijection that preserves $r$-wise intersection and maps stars
to stars.
\end{lem}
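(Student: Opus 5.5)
The plan is to factor $T$ as the composition of two maps, a translation $\tau\colon f(\mathbf x)\mapsto f(\mathbf x+\mathbf a)$ followed by a shift $\sigma\colon g\mapsto g-h$, and to check each property separately. A composition of bijections preserving $r$-wise intersection and sending stars to stars has the same properties, so this suffices.

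\emph{Well-definedness and bijectivity.} Substituting $\mathbf x+\mathbf a$ for $\mathbf x$ does not raise total degree, since each monomial $\mathbf x^{\boldsymbol\alpha}$ becomes a polynomial of degree $|\boldsymbol\alpha|$. Hence $\tau$ maps $\cP_{n,d}$ into itself, and clearly $\sigma$ does too. Both maps are invertible with inverses of the same type: $\tau^{-1}$ is translation by $-\mathbf a$, and $\sigma^{-1}$ is $g\mapsto g+h$. This is a purely formal identity of polynomials, so no issue arises from the fact that distinct polynomials may induce the same function on $\F_q^n$.

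\emph{Preservation of $r$-wise intersection.} Suppose $f_1,\dots,f_r\in\cP_{n,d}$ satisfy $f_i(\mathbf c)=b$ for all $i$. Then the shifted values $T(f_i)(\mathbf c-\mathbf a)=f_i(\mathbf c)-h(\mathbf c-\mathbf a)=b-h(\mathbf c-\mathbf a)$ all agree at $\mathbf c-\mathbf a$. For the converse, apply the same argument to $T^{-1}$, which is $g\mapsto g(\mathbf x-\mathbf a)+h(\mathbf x-\mathbf a)$. This is again a map of the same shape, with $h$ replaced by $-h(\mathbf x-\mathbf a)\in\cP_{n,d}$ and $\mathbf a$ replaced by $-\mathbf a$. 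So $T$ preserves $r$-wise intersection in both directions.

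\emph{Stars go to stars.} The star at $(\mathbf c,b)$ is mapped into the star at $(\mathbf c-\mathbf a,\,b-h(\mathbf c-\mathbf a))$, by the same computation. Equality of the two sets follows either because $T$ is a bijection and all stars have the same size $q^{\binom{n+d}{n}-1}$, or directly via $T^{-1}$. There is no real obstacle here; the only point requiring care is bookkeeping the evaluation point $\mathbf c-\mathbf a$ rather than $\mathbf c+\mathbf a$.
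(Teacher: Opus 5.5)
Your proof is correct and uses the same idea as the paper's: at each point, the values $T(f_i)$ coincide exactly when the translated values of the $f_i$ coincide, and bijectivity together with equal star sizes gives the rest. Your version spells out what the paper leaves implicit, namely degree preservation, the explicit inverse, and that the star at $(\mathbf c,b)$ maps onto the star at $(\mathbf c-\mathbf a,\,b-h(\mathbf c-\mathbf a))$.
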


\begin{proof}
For any $f_1,\ldots,f_r$ and $\mathbf x\in\F_q^n$, the values
$T(f_i)(\mathbf x)$ are equal precisely when the values
$f_i(\mathbf x+\mathbf a)$ are equal. The map is bijective and sends
stars to stars.
\end{proof}

The next lemma describes some basic structure of intersecting families.

\begin{lem}\label{lem:transversal}
Let $V\leq\cP_{n,d}$ be an $\F_q$-linear subspace containing the
constant polynomials, and let $C$ be an affine translate of $V$. Every
intersecting family $\cA\subseteq C$ satisfies $|\cA|\leq |V|/q$. If equality holds, then $\cA$ contains exactly one polynomial from
each set
\[
\{f+c:c\in\F_q\},\qquad f\in C.
\]
In particular, for every $\mathbf a\in\F_q^n$ and $b\in\F_q$, the
family $\{f\in C:f(\mathbf a)=b\}$
is a maximum intersecting family in $C$. Moreover, let $U\leq V$ be a subspace containing the constant polynomials. If
$|\cA|=|V|/q$, then every coset $D$ of $U$ in $C$ satisfies $|\cA\cap D|=|U|/q$.
\end{lem}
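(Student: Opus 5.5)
The plan is to partition $C$ into the classes $\{f+c:c\in\F_q\}$. Since $V$ contains the constants and $C=g+V$ is a coset of $V$, each such class lies entirely inside $C$. Hence $C$ splits into $|C|/q=|V|/q$ disjoint classes of size $q$.

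Within one class, two distinct members $f+c$ and $f+c'$ differ by the nonzero constant $c-c'$, so they never agree at any point. Therefore an intersecting family contains at most one member of each class, which gives $|\cA|\leq |V|/q$. If equality holds, $\cA$ must meet every class, and so it meets each in exactly one element.

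For the star statement, I would check two things about $\{f\in C:f(\mathbf a)=b\}$. It is intersecting, because any two members agree at $\mathbf a$. It also picks exactly one element of each class, namely $f+(b-f(\mathbf a))$. So its size is $|V|/q$ and it is maximum.

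For the final claim, let $D=h+U$ be a coset of $U$ contained in $C$. Because $U$ contains the constants, each class $\{f+c\}$ with $f\in D$ lies inside $D$. So $D$ is a union of exactly $|U|/q$ classes. By the equality case, $\cA$ meets each of these classes exactly once, and hence $|\cA\cap D|=|U|/q$. One could instead apply the bound to $\cA\cap D$ inside $D$ and count over all cosets, but the direct class-counting argument is cleaner.

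None of these steps is a real obstacle. The only point needing care is that the classes are closed within $C$ and within $D$, and this is exactly where the hypothesis that $V$ and $U$ contain the constant polynomials is used.
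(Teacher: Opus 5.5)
Your proof is correct and matches the paper's argument: partition $C$ into cosets of the constants, note that distinct members of a class never agree, and read off the bound, the equality case, and the star example. The only difference is the last claim, where you count the constant-classes inside $D$ directly via the equality case; the paper instead bounds each $|\cA\cap D|\le |U|/q$ and sums over the $|V|/|U|$ cosets to force equality, and both routes are equally valid.
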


\begin{proof}
Partition $C$ into the $|V|/q$ cosets of the one-dimensional subspace
of constant polynomials. Distinct members of a coset differ by a
nonzero constant, so their graphs are disjoint. This proves the bound
and the first equality statement.

Fix $\mathbf a\in\F_q^n$ and $b\in\F_q$. Each coset modulo the
constants contains a unique polynomial $f$ satisfying
$f(\mathbf a)=b$. The resulting family is intersecting and attains
the bound.

For any coset $D$ of $U$ in $C$, the same argument gives $|\cA\cap D|\leq |U|/q$. There are $|V|/|U|$ such cosets, and these
upper bounds sum to $|V|/q=|\cA|$, so equality holds for every $D$.
\end{proof}

The following corollary provides a convenient normalization for intersecting families.

\begin{cor}\label{cor:standard-normalization}
Let $V\leq\cP_{n,d}$ be an $\F_q$-linear subspace containing the
constant polynomials and the coordinate polynomials $x_1,\ldots,x_n$, and assume
that $V$ is invariant under translations of the variables. Let
$\cF\subseteq V$ be an $r$-wise intersecting family of size $|V|/q$,
where $r\geq2$. After a normalization as in
Lemma~\ref{lem:normalization}, we may assume that
\[
  0,x_1,\ldots,x_n\in\cF.
\]
Moreover, if $V_0=\{f\in V:f(0)=0\}$, then there is a unique function
$\varphi\colon V_0\to\F_q$ such that
\[
\cF=\{f+\varphi(f):f\in V_0\}, \qquad \varphi(0)=\varphi(x_1)=\cdots=\varphi(x_n)=0.
\]
\end{cor}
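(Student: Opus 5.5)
The plan is to normalize in two stages, using Lemma~\ref{lem:normalization} first to place $0$ in $\cF$ and then, by subtracting a suitable linear polynomial, to place $x_1,\ldots,x_n$ in $\cF$ as well. After that, Lemma~\ref{lem:transversal} will produce $\varphi$ directly.

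First, by Lemma~\ref{lem:transversal} applied with $C=V$, the family $\cF$ contains exactly one element of each coset $\{f+c:c\in\F_q\}$. In particular $\cF$ contains some constant $c_0$, and also, for each $i$, a unique element of the form $x_i+c_i$. Any map of the form $T(f)=f(\mathbf x+\mathbf a)-h(\mathbf x)$ with $h\in V$ sends $V$ to $V$, because $V$ is translation invariant. By Lemma~\ref{lem:normalization}, $T$ also preserves $r$-wise intersection, so $T(\cF)$ is again an $r$-wise intersecting family of size $|V|/q$ in $V$.

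Second, I do not need translations at all, so I take $\mathbf a=0$ and
\[
h(\mathbf x)=c_0+\sum_{i=1}^n (c_i-c_0)\,x_i\in V.
\]
Then $T(c_0)=0$ and $T(x_i+c_i)=x_i+c_i-c_0-(c_i-c_0)x_i-\ldots$. This is wrong as written, because $h$ must shift $x_i+c_i$ to $x_i$, not rescale the linear part. The correct choice is $h=c_0+\sum_i \lambda_i x_i$ with $\lambda_i$ chosen so that $T(x_i+c_i)=x_i$. That would require $x_i+c_i-c_0-\sum_j\lambda_jx_j=x_i$, i.e.\ $c_i=c_0$, which fails in general. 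So subtracting a linear $h$ cannot adjust constants on $x_i$ independently.

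The fix is to use the translation together with $h$. Let $T(f)=f(\mathbf x+\mathbf a)-c_0$ with $a_i=c_0-c_i$. Then $T(c_0)=0$ and $T(x_i+c_i)=x_i+a_i+c_i-c_0=x_i$. Hence after normalization $0,x_1,\ldots,x_n\in\cF$.

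Finally, the equality case of Lemma~\ref{lem:transversal} says each coset $f+\F_q$ with $f\in V_0$ meets $\cF$ in exactly one element. These cosets partition $V$, since each $g\in V$ is uniquely $g_0+g(0)$ with $g_0\in V_0$. Write that element as $f+\varphi(f)$; this defines $\varphi$ uniquely, and $\cF=\{f+\varphi(f):f\in V_0\}$. Because $0$ and each $x_i$ lie in $V_0\cap\cF$, we get $\varphi(0)=\varphi(x_i)=0$.

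The only delicate point is choosing the normalization so that all $n+1$ constants vanish simultaneously, and this is handled by the translation.
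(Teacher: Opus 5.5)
Your proof is correct and is essentially the paper's argument. The paper first subtracts a member of $\cF$ to get $0\in\cF$ and then translates by the constants $a_i$ with $x_i-a_i\in\cF$; your single map $f\mapsto f(\mathbf x+\mathbf a)-c_0$ with $a_i=c_0-c_i$ does the same thing, and you then use the equality case of Lemma~\ref{lem:transversal} to define $\varphi$ exactly as the paper does. In a final write-up, delete the abandoned attempt with $\mathbf a=0$ and a linear $h$, since it plays no role in the proof.
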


\begin{proof}
Subtract a fixed member of $\cF$, so that $0\in\cF$. For each $i$,
Lemma~\ref{lem:transversal} gives a unique $a_i\in\F_q$ such that
$x_i-a_i\in\cF$. Translating the variables by
$\mathbf a=(a_1,\ldots,a_n)$ gives
$0,x_1,\ldots,x_n\in\cF$. Translation invariance ensures that the
transformed family remains in $V$.

By the equality case of Lemma~\ref{lem:transversal}, $\cF$ contains exactly one member of each coset modulo the constants. Every such coset has a unique representative $f\in V_0$, so its member of $\cF$ can be written uniquely as $f+\varphi(f)$. The inclusions $0,x_1,\ldots,x_n\in\cF$ give the stated values of $\varphi$.
\end{proof}

We can precisely compute the difference set of two stars.

\begin{lem}\label{lem:difference-of-stars}
Let $m,D\geq1$, let $\mathbf a,\mathbf b\in \F_q^m$, and let
$c_0,c_1\in \F_q$. Then
\[
 \{Q-P:P,Q\in\cP_{m,D},\ P(\mathbf a)=c_0,\ Q(\mathbf b)=c_1\}
 =
 \begin{cases}
   \cP_{m,D},&\mathbf a\neq\mathbf b,\\[1mm]
   \{R\in\cP_{m,D}:R(\mathbf a)=c_1-c_0\},
      &\mathbf a=\mathbf b.
 \end{cases}
\]
\end{lem}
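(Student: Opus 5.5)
The case $\mathbf a=\mathbf b$ is immediate, and the case $\mathbf a\neq\mathbf b$ needs one interpolating polynomial.

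\emph{Case $\mathbf a=\mathbf b$.} If $P(\mathbf a)=c_0$ and $Q(\mathbf a)=c_1$, then $(Q-P)(\mathbf a)=c_1-c_0$. Conversely, let $R\in\cP_{m,D}$ satisfy $R(\mathbf a)=c_1-c_0$. Take $P=c_0$, a constant and hence in $\cP_{m,D}$, and $Q=R+c_0$. Then $Q(\mathbf a)=c_1$ and $Q-P=R$.

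\emph{Case $\mathbf a\neq\mathbf b$.} Here we show every $R\in\cP_{m,D}$ is a difference. It suffices to find $P\in\cP_{m,D}$ with
\[
P(\mathbf a)=c_0,\qquad P(\mathbf b)=c_1-R(\mathbf b).
\]
Then $Q=R+P$ lies in $\cP_{m,D}$, satisfies $Q(\mathbf b)=c_1$, and gives $Q-P=R$.

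Such a $P$ exists because a polynomial of degree at most $1\le D$ can take two prescribed values at two distinct points. Since $\mathbf a\neq\mathbf b$, some coordinate satisfies $a_i\neq b_i$. Define
\[
\ell(\mathbf x)=\frac{x_i-a_i}{b_i-a_i},
\]
so that $\ell(\mathbf a)=0$ and $\ell(\mathbf b)=1$. Set
\[
P=c_0+\bigl(c_1-R(\mathbf b)-c_0\bigr)\ell .
\]
This $P$ has degree at most $1\le D$, so $P\in\cP_{m,D}$, and it takes the required values at $\mathbf a$ and $\mathbf b$.

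Neither case has any real obstacle. The only hypothesis used is $D\geq1$, which guarantees that $\ell$ lies in $\cP_{m,D}$ and so lets us separate the two points.
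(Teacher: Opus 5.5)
Your proposal is correct and follows essentially the same route as the paper: for $\mathbf a\neq\mathbf b$ you choose an affine-linear $P$ with $P(\mathbf a)=c_0$, $P(\mathbf b)=c_1-R(\mathbf b)$ and set $Q=P+R$. You also make the interpolating polynomial explicit via a coordinate where $\mathbf a$ and $\mathbf b$ differ, and you spell out both inclusions in the case $\mathbf a=\mathbf b$, which the paper leaves as immediate.
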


\begin{proof}
The second case is immediate. Suppose that $\mathbf a\neq\mathbf b$, and let $R\in\cP_{m,D}$. Choose an affine-linear polynomial $P$ satisfying $P(\mathbf{a})=c_0$ and  $P(\mathbf{b})=c_1-R(\mathbf b)$. Setting $Q=P+R$ yields $Q(\mathbf b)=c_1$ and $Q-P=R$.
\end{proof}

The following lemma is a special case of the Combinatorial Nullstellensatz; see \cite{Alon99}.

\begin{lem}\label{lem:grid}
Let $\mathbb{F}$ be a finite field, and let $P\in \mathbb{F}[t_1,\ldots,t_m]$. If
$\deg_{t_i}P<|\mathbb{F}|$ for every $i$ and $P$ vanishes on $\mathbb{F}^m$, then $P=0$.
\end{lem}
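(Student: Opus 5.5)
The plan is induction on the number of variables $m$. The base case $m=1$ (or even $m=0$, where $P$ is a constant vanishing at the single point, hence zero) is the classical fact that a nonzero univariate polynomial of degree less than $|\mathbb F|$ has fewer than $|\mathbb F|$ roots. A polynomial vanishing at every element of $\mathbb F$ therefore has at least $|\mathbb F|$ roots, and so must be the zero polynomial.

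For the inductive step, write
\[
P(t_1,\ldots,t_m)=\sum_{j=0}^{k} P_j(t_1,\ldots,t_{m-1})\,t_m^{j},
\]
where $k=\deg_{t_m}P<|\mathbb F|$. Each $P_j$ satisfies $\deg_{t_i}P_j\le \deg_{t_i}P<|\mathbb F|$ for $i\le m-1$. Fix an arbitrary point $(a_1,\ldots,a_{m-1})\in\mathbb F^{m-1}$. The univariate polynomial $t_m\mapsto P(a_1,\ldots,a_{m-1},t_m)$ has degree at most $k<|\mathbb F|$ and vanishes on all of $\mathbb F$, so by the base case all its coefficients vanish: $P_j(a_1,\ldots,a_{m-1})=0$ for every $j$. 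Since the point was arbitrary, each $P_j$ vanishes on $\mathbb F^{m-1}$, and the induction hypothesis gives $P_j=0$ for all $j$, hence $P=0$.

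There is no real obstacle here. The only point needing care is that the degree bounds in the first $m-1$ variables pass from $P$ to the coefficient polynomials $P_j$, which is immediate from the expansion. Alternatively, one could cite Alon's Combinatorial Nullstellensatz directly: the grid is $\mathbb F^m$ with all $|S_i|=|\mathbb F|$, and no monomial of $P$ has every exponent at least $|\mathbb F|$.
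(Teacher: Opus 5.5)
Your proof is correct. The paper gives no argument of its own and only cites Alon's Combinatorial Nullstellensatz. The relevant statement there (Alon's Lemma~2.1, with every $S_i=\mathbb F$) is proved by exactly the induction on the number of variables that you wrote, so your argument is essentially the paper's cited proof.
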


The lemma below will be applied frequently in connection with spanning sets of polynomials.

\begin{lem}\label{lem:spanning-invariance}
Let $V$ be a vector space over $\F_q$, and let $\cD\subseteq V$ span $V$ and be closed under multiplication by nonzero elements of $\F_q$. Let $X$ be any set and let $\vartheta\colon V\to X$. If $\vartheta(v+s)=\vartheta(v)$ for all $v\in V$ and $s\in\cD$, then $\vartheta$ is constant.
\end{lem}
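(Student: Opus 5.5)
The idea is to exploit the invariance to show that $\vartheta$ is constant along every element of $V$, written as a combination of elements of $\cD$. Fix $v\in V$. Since $\cD$ spans $V$, we can write $v=\sum_{i=1}^k \lambda_i s_i$ with $s_i\in\cD$ and $\lambda_i\in\F_q$. Discarding the terms with $\lambda_i=0$, every remaining $\lambda_i$ is a nonzero scalar. Because $\cD$ is closed under multiplication by nonzero elements of $\F_q$, each $t_i:=\lambda_i s_i$ lies in $\cD$, so $v=t_1+\cdots+t_k$ with all $t_i\in\cD$. This is the one place where the scalar-closure hypothesis is needed: over a general field a span would involve arbitrary scalars, but here every linear combination collapses to a plain sum of elements of $\cD$.

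We then induct along the partial sums. Set $w_0=0$ and $w_j=t_1+\cdots+t_j$. Applying the hypothesis with the vector $w_{j-1}$ and $s=t_j\in\cD$ gives $\vartheta(w_j)=\vartheta(w_{j-1}+t_j)=\vartheta(w_{j-1})$. Chaining these equalities yields $\vartheta(v)=\vartheta(w_k)=\vartheta(w_0)=\vartheta(0)$. If $v=0$, or if all the coefficients vanish, then the sum is empty and the claim is trivial. Since $v$ was arbitrary, $\vartheta$ takes the constant value $\vartheta(0)$.

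None of these steps is a real obstacle. The only point needing care is the absorption of the coefficients $\lambda_i$ into $\cD$ in the first step, which uses exactly the closure of $\cD$ under nonzero scalars. Equivalently, one can say that the set $\{v:\vartheta(v)=\vartheta(0)\}$ contains $0$ and is closed under adding elements of $\cD$. Hence it contains the additive semigroup generated by $\cD$. Because $\cD$ is closed under nonzero scalars, and every element of $\F_q$ is a sum of copies of $1$, that semigroup is the full span $V$.
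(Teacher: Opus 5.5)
Your proof is correct and is essentially the paper's: the paper writes $u-v$ as a plain sum $s_1+\cdots+s_m$ of elements of $\cD$ (using the closure under nonzero scalars to absorb the coefficients) and applies the invariance once per summand, which is your chain of partial sums except that you compare every $v$ with $0$ rather than comparing two arbitrary vectors directly. One caution: the remark in your last paragraph that every element of $\F_q$ is a sum of copies of $1$ is false unless $q$ is prime, but you do not need it, since closure of $\cD$ under nonzero scalars already puts each $\lambda_i s_i$ in $\cD$.
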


\begin{proof}
Given $u,v\in V$, write $u-v=s_1+\cdots+s_m$ with $s_j\in\cD$. This is possible because $\cD$ spans $V$ and is closed under multiplication by nonzero scalars. Applying the hypothesis successively gives $\vartheta(u)=\vartheta(v)$.
\end{proof}

We present a lemma that plays a key role in the even characteristic case. It can be viewed as a boundary case of the Chevalley--Warning theorem. It is also related to the Hasse--Witt point-count congruence in characteristic $2$; see, for example, Katz \cite[Section 2.3.7]{Kat72} and Adolphson--Sperber \cite{AS17}. Although it follows from \cite[Theorem 1.4]{AS17}, we include a self-contained proof below.

\begin{lem}\label{lem:parity}
Let $q=2^r$, write $\mathbf x=(x_1,\ldots,x_d)$, and let $f\in \F_q[x_1,\ldots,x_d]$ with $\deg f\leq d$. Put $a=[x_1x_2\cdots x_d]f$ and let
\[
  Z(f)=\bigl|\{\mathbf x\in \F_q^d:f(\mathbf x)=0\}\bigr|.
\]
Then $Z(f)\equiv a^{q-1}\pmod2$. In particular, if $a\neq0$, then $f$ has a zero in $\F_q^d$.
\end{lem}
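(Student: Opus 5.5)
We compute $Z(f)$ modulo $2$ by means of the indicator $1-f(\mathbf x)^{q-1}$, which equals $1$ when $f(\mathbf x)=0$ and $0$ otherwise. Summing over $\F_q^d$ gives
\[
Z(f)\cdot 1_{\F_q}=\sum_{\mathbf x\in\F_q^d}\bigl(1-f(\mathbf x)^{q-1}\bigr)=-\sum_{\mathbf x\in\F_q^d}f(\mathbf x)^{q-1},
\]
because $q^d\equiv 0$ in characteristic $2$. Since we work in characteristic $2$, the left-hand side records $Z(f)\bmod 2$, viewed as an element of $\F_2\subseteq\F_q$. It therefore suffices to show that $\sum_{\mathbf x}f(\mathbf x)^{q-1}=a^{q-1}$ in $\F_q$. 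The right-hand side lies in $\{0,1\}$, so this identity gives exactly $Z(f)\equiv a^{q-1}\pmod 2$.

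\textbf{Step 1: character sums of monomials.} We use the standard fact that $\sum_{x\in\F_q}x^k=0$ unless $k\geq1$ and $(q-1)\mid k$, in which case the sum is $-1=1$. For a monomial $\mathbf x^{\boldsymbol\beta}$, the sum over $\F_q^d$ factors as a product of one-variable sums. It is therefore nonzero only if every exponent $\beta_i$ is a positive multiple of $q-1$. In that case $|\boldsymbol\beta|\geq d(q-1)$.

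\textbf{Step 2: expanding $f^{q-1}$.} Because $\deg f^{q-1}\leq d(q-1)$, the only monomials that can contribute are those with $\beta_i=q-1$ for all $i$. Hence the whole sum equals $(-1)^d$ times the coefficient of $(x_1\cdots x_d)^{q-1}$ in $f^{q-1}$, which is just that coefficient since we are in characteristic $2$. Only the top homogeneous part $f_d$ of $f$ matters here. So we need the coefficient of $(x_1\cdots x_d)^{q-1}$ in $f_d^{q-1}$.

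\textbf{Step 3: computing the coefficient.} This is the main obstacle. We write $q-1=\sum_{j=0}^{r-1}2^j$, so that $f_d^{q-1}=\prod_{j}f_d^{(2^j)}(\mathbf x^{2^j})$, where $f_d^{(2^j)}$ denotes $f_d$ with its coefficients raised to the power $2^j$. This product expresses the Frobenius twists in characteristic $2$. We then pick one monomial $\mathbf x^{\boldsymbol\gamma_j}$ of degree $d$ from each factor, so that the exponent vector of the product is $\sum_j 2^j\boldsymbol\gamma_j$. We need $\sum_j 2^j\boldsymbol\gamma_j=(q-1,\dots,q-1)$, and we argue by induction on $j$ using binary digits. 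The equation modulo $2$ forces every coordinate of $\boldsymbol\gamma_0$ to be odd. Together with $|\boldsymbol\gamma_0|=d$, this forces $\boldsymbol\gamma_0=(1,\dots,1)$. Subtracting, dividing by $2$, and repeating gives $\boldsymbol\gamma_j=(1,\dots,1)$ for every $j$; the carries vanish at each stage because the parity argument applies again to the reduced vector. Thus exactly one choice contributes, and its coefficient is $\prod_j a^{2^j}=a^{q-1}$.

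Combining the three steps, $\sum_{\mathbf x}f(\mathbf x)^{q-1}=a^{q-1}$, and hence $Z(f)\equiv a^{q-1}\pmod 2$. If $a\neq 0$, then $Z(f)$ is odd, so in particular $Z(f)\geq 1$ and $f$ has a zero in $\F_q^d$.
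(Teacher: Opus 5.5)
Your proof is correct and follows the paper's route: the same indicator/power-sum reduction to the coefficient of $(x_1\cdots x_d)^{q-1}$ in $f^{q-1}$, and the same binary-digit parity induction that forces every contributing monomial to be $x_1\cdots x_d$. The only difference is in Step 3, where you use the Frobenius factorization $f_d^{q-1}=\prod_j f_d^{(2^j)}(\mathbf x^{2^j})$ instead of the paper's multinomial expansion with Kummer's theorem; this avoids tracking which multinomial coefficients are odd and makes the uniqueness of the contributing term immediate.
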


\begin{proof}
Let $\overline{Z}(f)$ be the reduction of $Z(f)$ mod $2$. We view $\overline{Z}(f)$ as an element of $\F_2\subseteq\F_q$. Observe that
\[
\overline{Z}(f)=\sum_{\mathbf x\in\mathbb F_q^d}\bigl(1-f(\mathbf x)^{q-1}\bigr).
\]
Moreover,
\[
\sum_{t\in\mathbb F_q}t^m=
\begin{cases}
1,& m>0\ \text{and}\ q-1\mid m,\\
0,&\text{otherwise}.
\end{cases}
\]
Consequently, $\sum_{\mathbf{x}\in\F_q^{d}} \mathbf{x}^{\boldsymbol{\alpha}}=0$ if $\alpha_i<q-1$ for some $i$. As $\deg f^{q-1}\le d(q-1)$, the only monomial of $f^{q-1}$ that
survives after summing over $\mathbb F_q^d$ is
$x_1^{q-1}\cdots x_d^{q-1}$. We obtain
\begin{equation}\label{eq:Z(f)}
\overline{Z}(f)=[x_1^{q-1}\cdots x_d^{q-1}]f^{q-1}.
\end{equation}
It remains to compute this coefficient. Write $f=\sum_{\boldsymbol{\beta}} c_{\boldsymbol{\beta}}\mathbf{x}^{\boldsymbol{\beta}}$ so that $c_{\mathbf 1}=a$, where $\mathbf{1}=(1,\ldots,1)$. A term in the multinomial expansion that
contributes to $\mathbf x^{(q-1)\mathbf1}$ is specified by
nonnegative integers $m_{\boldsymbol\beta}$ satisfying
\[
\sum_{\boldsymbol{\beta}} m_{\boldsymbol{\beta}}=q-1,
\qquad
\sum_{\boldsymbol{\beta}} m_{\boldsymbol{\beta}}\boldsymbol{\beta}=(q-1)\mathbf 1.
\]
Taking total degrees shows that every $\boldsymbol{\beta}$ with $m_{\boldsymbol{\beta}}>0$ satisfies $|\boldsymbol{\beta}|=d$.

Suppose the corresponding multinomial coefficient $\binom{q-1}{(m_{\boldsymbol{\beta}})_{\boldsymbol{\beta}}}$ is odd. By Kummer's theorem, the 
addition $\sum_{\boldsymbol{\beta}}m_{\boldsymbol{\beta}}=q-1$ is carry-free in base $2$. Since \(q-1\) is odd,
exactly one \(m_{\boldsymbol{\beta}}\) is odd. Reducing the second equality modulo \(2\)
shows that its exponent vector satisfies
$\boldsymbol{\beta}\equiv\mathbf 1\pmod 2$. 
But \(|\boldsymbol{\beta}|=d\), so its \(d\) coordinates are positive odd integers with
sum \(d\). Hence \(\boldsymbol{\beta}=\mathbf 1\).

Replace $m_{\mathbf 1}$ by $m_{\mathbf 1}-1$, and then divide all the $m_{\boldsymbol{\beta}}$ values by $2$. The same argument then applies with $q-1$ replaced by $(q-2)/2=2^{r-1}-1$. Repeating this for all binary digits of $q-1$ shows that
\[
m_{\mathbf 1}=q-1, \qquad \text{and}
\qquad
m_{\boldsymbol{\beta}}=0 \quad \text{for all} \quad \boldsymbol{\beta}\ne\mathbf 1.
\]
Therefore, every other multinomial coefficient is even, and
\begin{equation}\label{eq:coeff-c1}
[x_1^{q-1}\cdots x_d^{q-1}]f^{q-1}
=c_{\mathbf 1}^{q-1}=a^{q-1}.
\end{equation}
Combining equations~\eqref{eq:Z(f)} and \eqref{eq:coeff-c1} yields $\overline{Z}(f)=a^{q-1}$, 
as required.
\end{proof}

In Section~\ref{sec:qge3-pairwise}, we will find it helpful to prescribe simultaneously the value of the norm and the value of an $\F_q$-linear functional at a single element of $\F_{q^m}$.

\begin{lem}\label{lem:prescribed-norm-linear}
Let $q\geq4$ and $m\geq3$, and let
$T\colon\F_{q^m}\to\F_q$ be a nonzero $\F_q$-linear map. For every
$a,\tau\in\F_q^\times$, there exists $\gamma\in\F_{q^m}$ such that
\[
  \Nm_{\F_{q^m}/\F_q}(\gamma)=a,
  \qquad
  T(\gamma)=\tau.
\]
\end{lem}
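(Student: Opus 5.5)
We count the elements of $N_a=\{\gamma\in\F_{q^m}:\Nm_{\F_{q^m}/\F_q}(\gamma)=a\}$ that lie in the affine hyperplane $H_\tau=\{\gamma:T(\gamma)=\tau\}$, and show the count is positive using character sums. The norm map $\F_{q^m}^\times\to\F_q^\times$ is surjective, so $|N_a|=(q^m-1)/(q-1)$. Every $\F_q$-linear functional has the form $T(\gamma)=\Tr_{\F_{q^m}/\F_q}(\beta\gamma)$ for a unique $\beta\in\F_{q^m}^\times$. Replacing $\gamma$ by $\beta\gamma$ multiplies the norm by $\Nm(\beta)$, and replacing $a$ by $a\Nm(\beta)$ absorbs this. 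So we may assume $T=\Tr$. Similarly, scaling $\gamma$ by $c\in\F_q^\times$ multiplies $\Tr$ by $c$ and $\Nm$ by $c^m$, but we will not need this.

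Let $\psi$ be a nontrivial additive character of $\F_q$. Then
\[
\#\{\gamma\in N_a:\Tr(\gamma)=\tau\}=\frac{1}{q}\sum_{\gamma\in N_a}\sum_{u\in\F_q}\psi\bigl(u(\Tr(\gamma)-\tau)\bigr)
=\frac{|N_a|}{q}+\frac1q\sum_{u\neq0}\psi(-u\tau)\sum_{\gamma\in N_a}\psi(\Tr(u\gamma)).
\]
Next we expand the indicator of $N_a$ using multiplicative characters $\chi$ of $\F_q^\times$, via $\mathbf 1[\Nm\gamma=a]=\frac1{q-1}\sum_\chi\chi(\Nm\gamma)\overline{\chi}(a)$. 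The inner sums then become Gauss sums over $\F_{q^m}$ for the lifted characters $\chi\circ\Nm$ and the additive character $\psi\circ\Tr$. For $\chi$ nontrivial these have absolute value $q^{m/2}$. For the trivial $\chi$, the sum over $\gamma\neq0$ of $\psi(\Tr(u\gamma))$ equals $-1$. Summing over $u$ gives the error bound
\[
\Bigl|\#-\frac{q^m-1}{q(q-1)}\Bigr|\le \frac{1}{q}\cdot (q-1)\cdot\Bigl(\frac{q-2}{q-1}q^{m/2}+\frac{1}{q-1}\Bigr)\le q^{m/2}.
\]
By Davenport--Hasse the lifted Gauss sums are even explicitly $\pm G(\chi)^m$, but the absolute bound suffices. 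The sum $\sum_{u\neq0}\psi(-u\tau)$ in the trivial-character term equals $-1$ because $\tau\neq0$, which slightly improves the main term.

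It remains to check that $\frac{q^m-1}{q(q-1)}>q^{m/2}$ for $q\ge4$ and $m\ge3$. This is the main obstacle: the inequality is tight for small parameters. At $m=3$ the left side is $(q^2+q+1)/q\approx q+1$, while the right side is $q^{3/2}$, so the crude bound fails. I therefore expect to treat $m=3$ and $m=4$ (and possibly $m=5$ with $q=4$) separately. For $m\ge 5$, or $m=4$ with $q$ large, the estimate $q^{m-2}\gtrsim q^{m/2}$ works once $m\ge 5$, with a little care at the boundary.

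For the small cases I would argue geometrically. With $T=\Tr$ and $m=3$, the set $\{\Tr=\tau\}$ is an affine plane in $\F_q^3$, and $\Nm=a$ is a cubic surface (a norm form). Their intersection is a plane cubic curve $\Nm(\gamma)=a$ restricted to $\Tr(\gamma)=\tau$. Over $\overline{\F_q}$ the norm form factors into three conjugate linear forms. The intersection is therefore the affine part of a triangle-type cubic $\ell_1\ell_2\ell_3=a\,z^3$ (after homogenizing with the hyperplane at infinity), which is a smooth genus-one curve, or a nodal-type curve that can be counted exactly. The Hasse--Weil bound $\ge q+1-2\sqrt q$ minus at most $3$ points at infinity is positive for $q\ge4$ in the smooth case; note that $q=4$ gives $5-4-3<0$, so an exact count may be needed there. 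Alternatively, the torsor structure under the norm-one torus $\{\Nm=1\}$ gives exact counts in terms of Gauss sums, $|\#-\text{main}|\le 2q^{1/2}$-ish after Davenport--Hasse cancellation, and these can be checked directly. For $m=4$ the same character-sum count with the refined constant should suffice. Any leftover tiny cases (such as $q=4,m=3$) I would verify by direct computation.
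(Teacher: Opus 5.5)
There is a genuine gap at $m=3$. Your reduction to $T=\Tr$ and your character-sum expansion are correct. Your crude bound already settles every $m\geq4$, not only $m\geq5$: the main term $\frac{q^m-1}{q(q-1)}=\frac{1+q+\cdots+q^{m-1}}{q}$ exceeds $q^{m-2}\geq q^{m/2}$, so $m=4$ needs no separate treatment. For $m=3$, the case the paper needs for $n=2$ in Proposition~\ref{prop:critical-even}, the proposal does not prove anything.

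Your Hasse--Weil fallback subtracts up to three points at infinity. The resulting bound $q+1-2\sqrt q-3$ is negative for $q=4$, $5$ and $7$, not just $q=4$, so your claim that it is positive for $q\geq4$ is false. The singular case is left as \emph{can be counted exactly}, and the leftover cases are deferred to a computation you did not carry out. Two observations would repair this route:
\begin{itemize}
\item No point at infinity is $\F_q$-rational, since such a point would be a nonzero $v\in\F_{q^3}$ with $\Nm(v)=0$. Hence the affine count equals $\#C(\F_q)\geq q+1-2\sqrt q>0$.
\item Writing the three conjugate linear forms as $X$, $Y$, $\tau Z-X-Y$, the projective curve is $XY(\tau Z-X-Y)=aZ^3$. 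It is singular only when $27a=\tau^3$ (and never in characteristic $3$). In that case $\gamma=\tau/3\in\F_q$ is itself a solution.
\end{itemize}

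A simpler repair stays within your character-sum method. The loss comes from bounding the $u$-sum trivially. Put $G_m(\chi)=\sum_{\gamma\neq0}\chi(\Nm\gamma)\psi(\Tr\gamma)$, so $|G_m(\chi)|=q^{m/2}$ for $\chi\neq1$. Replacing $\gamma$ by $u^{-1}\gamma$ shows that for $\chi\neq1$ the inner sum equals $\bar\chi(u)^m G_m(\chi)$. The sum over $u\in\F_q^\times$ is then a Gauss sum of $\bar\chi^m$ over $\F_q$, of absolute value at most $\sqrt q$. Combined with the exact trivial-character contribution $q^{m-1}/(q-1)$, this gives
\[
\#\{\gamma\in\F_{q^m}:\Nm(\gamma)=a,\ \Tr(\gamma)=\tau\}\ \geq\ \frac{q^{m-1}-(q-2)\,q^{(m-1)/2}}{q-1}.
\]
This is positive for every $m\geq3$; for $m=3$ it equals $2q/(q-1)$.

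The paper instead quotes the Moisio--Wan trace--norm estimate, with error $(m-1)q^{(m-2)/2}$, under which $m=3$ reduces to $q+1>2\sqrt q$. The refined Gauss-sum count would make the lemma self-contained.
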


\begin{proof}
By nondegeneracy of the trace pairing, there is
$b\in\F_{q^m}^\times$ such that $T(\gamma)=\Tr_{\F_{q^m}/\F_q}(b\gamma)$. It is therefore enough to find $y\in\F_{q^m}$ satisfying
\[
  \Nm_{\F_{q^m}/\F_q}(y)
  =a\Nm_{\F_{q^m}/\F_q}(b), \qquad  \Tr_{\F_{q^m}/\F_q}(y)=\tau,
\]
and then take $\gamma=b^{-1}y$.

For $u,w\in\F_q^\times$, let
\[
  \nu_m(u,w)
  =\bigl|\{y\in\F_{q^m}:
      \Tr_{\F_{q^m}/\F_q}(y)=u,\ 
      \Nm_{\F_{q^m}/\F_q}(y)=w\}\bigr|.
\]
By the trace--norm estimate of Moisio and Wan
\cite[Theorem~1.2]{MW10},
\[
  \left|
    \nu_m(u,w)-\frac{q^{m-1}-1}{q-1}
  \right|
  \leq (m-1)q^{(m-2)/2}.
\]
We show the main term is larger than the error term. If $m=3$, then
\[
  \frac{q^2-1}{q-1}=q+1>2\sqrt q.
\]
If $m\geq4$, then
\[
  \frac{q^{m-1}-1}{q-1}
  >q^{m-2}
  >(m-1)q^{(m-2)/2},
\]
because $q^{(m-2)/2}\geq2^{m-2}>m-1$. Hence $\nu_m(u,w)>0$, and the required element $y$ exists.
\end{proof}

\section{3-wise EKR and special cases}
\label{sec:remaining-cases}

In this section, we settle the special cases of our main theorem and prove a 3-wise Erd\H{o}s--Ko--Rado theorem for polynomials over finite fields.

\subsection{Degree one}
\label{subsec:degree-one}

\begin{lem}\label{lem:degree-one}
The maximum intersecting families in $\cP_{n,1}$ are precisely the
families
\[
  \cF_\varphi
  =\{\mathbf a\cdot\mathbf x+\varphi(\mathbf a):
       \mathbf a\in\F_q^n\},
\]
where $\varphi\colon\F_q^n\to\F_q$ is arbitrary. Moreover,
$\cF_\varphi$ is a star if and only if there are
$\mathbf s\in\F_q^n$ and $b\in\F_q$ such that $\varphi(\mathbf a)=b-\mathbf a\cdot\mathbf s$ for all $\mathbf a\in\F_q^n$.
\end{lem}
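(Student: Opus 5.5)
The plan is to show that in degree one any two polynomials with different linear parts already intersect. Then only the constraint from Lemma~\ref{lem:transversal} remains.

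\emph{Key observation.} Let $f=\mathbf a\cdot\mathbf x+c$ and $g=\mathbf a'\cdot\mathbf x+c'$ with $\mathbf a\neq\mathbf a'$. Then $f-g=(\mathbf a-\mathbf a')\cdot\mathbf x+(c-c')$ is a nonconstant affine-linear polynomial, so it takes every value in $\F_q$, in particular $0$. Hence $f$ and $g$ intersect. On the other hand, if $\mathbf a=\mathbf a'$, then $f$ and $g$ intersect exactly when $c=c'$.

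\emph{Classification.} Take $V=C=\cP_{n,1}$ in Lemma~\ref{lem:transversal}. A family of size $q^{n}=|\cP_{n,1}|/q$ that is intersecting must contain exactly one member of each class $\{\mathbf a\cdot\mathbf x+c:c\in\F_q\}$. Such a family is therefore $\cF_\varphi$ for a unique function $\varphi\colon\F_q^n\to\F_q$. Conversely, every $\cF_\varphi$ has size $q^n$ and is intersecting by the key observation. So the maximum intersecting families are exactly the $\cF_\varphi$ with $\varphi$ arbitrary.

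\emph{Stars.} The star centered at $(\mathbf s,b)$ consists of the polynomials $\mathbf a\cdot\mathbf x+c$ with $\mathbf a\cdot\mathbf s+c=b$. It therefore equals $\cF_\varphi$ with $\varphi(\mathbf a)=b-\mathbf a\cdot\mathbf s$. Since the family determines $\varphi$ uniquely, $\cF_\varphi$ is a star if and only if $\varphi$ has this form for some $\mathbf s$ and $b$.

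There is no real obstacle. The only point needing care is the uniqueness of $\varphi$, which ensures that the star criterion is an equivalence rather than just a sufficient condition.
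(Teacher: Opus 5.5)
Your proposal is correct and follows essentially the same route as the paper: Lemma~\ref{lem:transversal} forces exactly one member per linear part, and nonconstant affine-linear differences always have a zero. Your star criterion, $\mathbf a\cdot\mathbf s+\varphi(\mathbf a)=b$ for all $\mathbf a$, is the same computation, and your explicit remark that $\varphi$ is determined by the family is a small point the paper leaves implicit.
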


\begin{proof}
By Lemma~\ref{lem:transversal}, a maximum intersecting family contains
exactly one polynomial for each linear part. This allows us to write 
the family as $\cF_\varphi$ for some function $\varphi\colon\F_q^n\to\F_q$. Conversely, every $\cF_\varphi$ is intersecting: the difference of two distinct members is a nonconstant
affine-linear polynomial, which has a zero. As $\cF_\varphi$ has $q^n$ members, it is a maximum family.

The family is the star $\{f \in \mathcal{P}_{n,1}: f(\mathbf s)=b\}$ precisely when $\mathbf a\cdot\mathbf s+\varphi(\mathbf a)=b$ for all $\mathbf a\in\F_q^n$, or equivalently, $\varphi(\mathbf a)=b-\mathbf a\cdot\mathbf s$ for every $\mathbf a$. 
\end{proof}

\begin{prop}\label{prop:degree-one}
The space $\cP_{n,1}$ has the strict EKR property if and only if
$q=2$ and $n=1$.
\end{prop}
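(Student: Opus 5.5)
By Lemma~\ref{lem:degree-one}, the maximum intersecting families in $\cP_{n,1}$ are exactly the families $\cF_\varphi$, one for each function $\varphi\colon\F_q^n\to\F_q$. So the proposition reduces to counting: $\cP_{n,1}$ has the strict EKR property precisely when every function $\varphi$ has the affine form $\varphi(\mathbf a)=b-\mathbf a\cdot\mathbf s$.

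The set of such affine-type functions has size at most $q^{n}\cdot q=q^{n+1}$, since it is parametrized by the pair $(\mathbf s,b)$. All functions $\F_q^n\to\F_q$ number $q^{q^n}$. Hence the strict EKR property holds if and only if every function $\F_q^n\to\F_q$ is affine, and this requires $q^{q^n}\le q^{n+1}$, that is, $q^n\le n+1$.

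Since $q\ge2$, we have $q^n\ge 2^n\ge n+1$, with equality only when $q=2$ and $n=1$. To be explicit: if $q\ge3$ then $q^n\ge 3^n>n+1$, and if $q=2$ then $2^n>n+1$ for $n\ge2$. So for $(q,n)\neq(2,1)$ there is a non-affine $\varphi$, and the corresponding $\cF_\varphi$ is a maximum intersecting family that is not a star.

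Conversely, suppose $q=2$ and $n=1$. Every function $\varphi\colon\F_2\to\F_2$ is affine: take $b=\varphi(0)$ and $s=\varphi(0)-\varphi(1)$, which gives $b-s=\varphi(1)$. Equivalently, the map $(s,b)\mapsto\varphi$ is a bijection between two sets of size $4$. So every $\cF_\varphi$ is a star, and the strict EKR property holds in this case. No step here is a real obstacle; the only care needed is the elementary inequality $q^n>n+1$ away from $(q,n)=(2,1)$.
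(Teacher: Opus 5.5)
Your proof is correct and follows the paper's route. Both arguments reduce, via Lemma~\ref{lem:degree-one}, to deciding whether every $\varphi\colon\F_q^n\to\F_q$ has the form $b-\mathbf a\cdot\mathbf s$, and both check this directly for $(q,n)=(2,1)$. The only difference is how you obtain a non-affine $\varphi$ in the other cases: you compare $q^{q^n}$ with $q^{n+1}$, whereas the paper exhibits explicit witnesses, namely $\varphi(a)=a^2$ for $n=1$, $q\geq3$, and $\varphi(\mathbf a)=a_1a_2$ for $n\geq2$.
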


\begin{proof}
If $q=2$ and $n=1$, every function $\F_2\to\F_2$ is affine, so
Lemma~\ref{lem:degree-one} shows that every maximum intersecting
family is a star. In every other case, the lemma gives a maximum
family that is not a star: take $\varphi(a)=a^2$ when $n=1$ and
$q\geq3$, and $\varphi(\mathbf a)=a_1a_2$ for
$\mathbf a=(a_1,\ldots,a_n)$ when $n\geq2$.
\end{proof}

\subsection{Intersecting families over \texorpdfstring{$\F_2$}{F2}}
\label{subsec:F2-pairwise}

For each function $\epsilon\colon\F_2^n\to\F_2$ represented by
a polynomial in $\cP_{n,d}$, let
\[
\Lambda_{\epsilon}=\{f\in\cP_{n,d}:f(\mathbf x)=\epsilon(\mathbf x)
\text{ for all }\mathbf x\in\F_2^n\}.
\]
Write $1+\epsilon$ for the complementary function.

\begin{prop}
\label{prop:F2-pairwise}
The maximum intersecting families in $\cP_{n,d}$ over $\F_2$ are
precisely the unions of one set from each complementary pair
$\{\Lambda_{\epsilon},
\Lambda_{1+\epsilon}\}$.
\end{prop}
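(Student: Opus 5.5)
Over $\F_2$, two polynomials $f,g\in\cP_{n,d}$ intersect exactly when the functions they represent on $\F_2^n$ are not complementary. Indeed, $f(\mathbf x)\neq g(\mathbf x)$ for all $\mathbf x$ means $f(\mathbf x)=g(\mathbf x)+1$ everywhere. So whether $f$ and $g$ intersect depends only on the classes $\Lambda_\epsilon\ni f$ and $\Lambda_{\epsilon'}\ni g$: they intersect if and only if $\epsilon'\neq 1+\epsilon$. Since $1\in\cP_{n,d}$, the function $1+\epsilon$ is represented whenever $\epsilon$ is, so the nonempty sets $\Lambda_\epsilon$ come in complementary pairs. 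These pairs partition $\cP_{n,d}$, and $1+\epsilon\ne\epsilon$ always holds.

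The proof then has three steps.

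\begin{enumerate}[(1)]
\item \textbf{Sizes.} The map $f\mapsto f+1$ is a bijection $\Lambda_\epsilon\to\Lambda_{1+\epsilon}$, so $|\Lambda_\epsilon|=|\Lambda_{1+\epsilon}|$ for every represented $\epsilon$.
\item \textbf{Unions are maximum intersecting families.} Choose one member of each complementary pair and let $\cF$ be the union. Any two members of $\cF$ lie in classes $\Lambda_\epsilon,\Lambda_{\epsilon'}$ that are either equal or belong to different pairs, so $\epsilon'\neq 1+\epsilon$ and the two members intersect. By (1), $\cF$ contains exactly half of each pair, so $|\cF|=|\cP_{n,d}|/2=2^{\binom{n+d}{n}-1}$. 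This is the maximum possible size.
\item \textbf{Every maximum intersecting family has this form.} Let $\cF$ be a maximum intersecting family. By Lemma~\ref{lem:transversal} with $V=C=\cP_{n,d}$, $\cF$ contains exactly one of $f$ and $f+1$ for every $f$.
\begin{itemize}
\item Suppose $\cF$ meets $\Lambda_\epsilon$ at some $f$. Then $\cF$ contains no member of $\Lambda_{1+\epsilon}$, since such a member would be disjoint from $f$.
\item Now take any $g\in\Lambda_\epsilon$. Then $g+1\in\Lambda_{1+\epsilon}$, so $g+1\notin\cF$, and the transversal property forces $g\in\cF$. Hence $\Lambda_\epsilon\subseteq\cF$.
\item Finally, for each pair the transversal property applied to any $f\in\Lambda_\epsilon$ shows that $\cF$ contains $f$ or $f+1$. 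So $\cF$ meets, and therefore contains, exactly one class of each pair.
\end{itemize}
\end{enumerate}

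No step is a real obstacle. The only points that need care are confirming that $1+\epsilon$ is represented whenever $\epsilon$ is, and using the transversal property to pass from "meets $\Lambda_\epsilon$" to "contains all of $\Lambda_\epsilon$."
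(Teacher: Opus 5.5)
Your proof is correct and follows essentially the same route as the paper. Both arguments use the same ingredients: non-intersection over $\F_2$ means complementary value functions, $f\mapsto f+1$ pairs $\Lambda_\epsilon$ with $\Lambda_{1+\epsilon}$ in equal sizes, and a size count forces a maximum family to contain an entire class from each pair. The only cosmetic difference is how you pass from ``meets $\Lambda_\epsilon$'' to ``contains $\Lambda_\epsilon$'': you use the transversal property from Lemma~\ref{lem:transversal}, whereas the paper combines ``meets at most one class per pair'' directly with $|\cF|=|\cP_{n,d}|/2$.
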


\begin{proof}
The sets $\Lambda_{\epsilon}$ partition $\cP_{n,d}$, and
$f\mapsto f+1$ pairs them into sets of equal size. No member of one
set intersects a member of its partner, so an intersecting family can meet at most one set from each pair. Choosing one set from each pair accounts for exactly half of $\cP_{n,d}$; a maximum family must therefore contain one entire set from every pair. Conversely, if two polynomials in such a union failed to intersect, their represented functions would be complementary, placing them in the two sets of the same pair, a contradiction.
\end{proof}

\begin{thm}
\label{thm:F2-pairwise}
The space $\cP_{n,d}$ over $\F_2$ has the strict EKR
property if and only if $n=1$.
\end{thm}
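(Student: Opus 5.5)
We will read off the answer from Proposition~\ref{prop:F2-pairwise}. A star centered at $(\mathbf a,b)$ is itself a union of classes $\Lambda_\epsilon$, namely those with $\epsilon(\mathbf a)=b$, because membership depends only on the represented function. So the strict EKR property holds exactly when every choice of one class from each complementary pair $\{\Lambda_\epsilon,\Lambda_{1+\epsilon}\}$ is of this form. Let $E$ be the set of functions $\F_2^n\to\F_2$ represented by polynomials in $\cP_{n,d}$. This set is closed under $\epsilon\mapsto1+\epsilon$ and contains the constants. The question therefore becomes: is every ``complement-selecting'' subset $S\subseteq E$ (exactly one of $\epsilon,1+\epsilon$ in $S$ for each $\epsilon$) of the form $\{\epsilon:\epsilon(\mathbf a)=b\}$?

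\textbf{Case $n=1$.} Every function on $\F_2$ is affine, so $E$ is the set of four functions $0,1,x,1+x$, and there are two complementary pairs, $\{0,1\}$ and $\{x,1+x\}$. This gives four choices of $S$. Each choice is realized by a star: the choice $\{b, x+c\}$ equals the star at $(\mathbf a,b)$ with $\mathbf a=b+c$, since then $\mathbf a+c=b$. Hence every maximum family is a star. Stars at distinct centers are distinct, and the four stars match the four choices, which confirms the count.

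\textbf{Case $n\geq2$.} We exhibit a selection that is not a star. When $d\geq1$, the set $E$ contains all affine functions. Choose $S$ to contain $0$, $x_1$, and $x_2$, and to contain $1+x_1+x_2$ (rather than $x_1+x_2$). The remaining pairs can be chosen arbitrarily. Suppose this were the star at $(\mathbf a,b)$. From $0\in S$ we get $b=0$, from $x_1,x_2\in S$ we get $a_1=a_2=0$, and then $1+x_1+x_2$ evaluates to $1\neq0$ at $\mathbf a$, a contradiction.

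The only subtlety, rather than a real obstacle, is checking that $x_1+x_2$ and its complement form a pair distinct from the pairs of $0$, $x_1$ and $x_2$. This holds because these are distinct functions on $\F_2^n$ for $n\geq2$. Alternatively, the $n\geq2$, $d=1$ case is already covered by Proposition~\ref{prop:degree-one}, and the argument above handles all $d$ uniformly.
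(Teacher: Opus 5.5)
Your proof is correct and follows the paper's route. Both arguments reduce via Proposition~\ref{prop:F2-pairwise} to choosing one class from each complementary pair, match the four choices with the four stars when $n=1$, and use the pairs of $0$, $x_1$, $x_2$, $x_1+x_2$ when $n\geq2$. The only difference is in the case $n\geq2$: the paper counts ($n+2$ distinct pairs give at least $2^{n+2}$ maximum families versus $2^{n+1}$ stars), whereas you exhibit an explicit non-star selection containing $0,x_1,x_2,1+x_1+x_2$.
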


\begin{proof}
There are $2^{n+1}$ distinct stars. If $n=1$, the $4$ functions
$\F_2\to\F_2$ are affine-linear and form two complementary pairs.
Proposition~\ref{prop:F2-pairwise} gives $4$ maximum intersecting families, coinciding with the $4$ stars.

Suppose that $n\geq2$. The pairs $\{0,1\}$,
$\{x_i,1+x_i\}$ for $1\leq i\leq n$, and
$\{x_1+x_2,1+x_1+x_2\}$ give $n+2$ distinct complementary pairs.
Proposition~\ref{prop:F2-pairwise} gives at least $2^{n+2}$ maximum
intersecting families, whereas there are only $2^{n+1}$ stars.
\end{proof}

\subsection{\texorpdfstring{$3$}{3}-wise EKR for
  \texorpdfstring{$q\geq 3$}{q >= 3}}
\label{subsec:threewise-qge3}

The maximum size of a 3-wise intersecting family is again $q^{\binom{n+d}{n}-1}$: every such family is intersecting, and every star is 3-wise intersecting.

\begin{prop}\label{prop:threewise-linear}
Let $q\geq3$, and let $\cF\subseteq\cP_{n,d}$ be a maximum
3-wise intersecting family with $0\in\cF$. Then $\cF$ is an
$\F_q$-linear subspace.
\end{prop}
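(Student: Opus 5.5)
The plan is to use the transversal structure from Lemma~\ref{lem:transversal} and then use suitable triples of members of $\cF$ to force $\cF$ to be closed under scalar multiplication and under addition. Since $\cF$ is 3-wise intersecting, it is in particular intersecting, and it has the maximum size $q^{\binom{n+d}{n}-1}=|\cP_{n,d}|/q$. So Lemma~\ref{lem:transversal}, applied with $V=C=\cP_{n,d}$, shows that $\cF$ contains exactly one polynomial from each coset $\{f+c:c\in\F_q\}$. Hence, for every $f\in\cP_{n,d}$ there is a unique $c\in\F_q$ with $f+c\in\cF$. To show that a given $f$ lies in $\cF$, it suffices to show that this constant is $0$.

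\emph{Scalar multiples.} Let $f\in\cF$ and $\lambda\in\F_q^\times$; the case $\lambda=1$ is trivial. Let $h=\lambda f+c$ be the member of $\cF$ in the coset of $\lambda f$. The triple $0,f,h$ has a common value at some $\mathbf a$. Then $f(\mathbf a)=0$, and so $0=h(\mathbf a)=\lambda f(\mathbf a)+c=c$. Thus $\lambda f\in\cF$.

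\emph{Sums.} This is the step where a good choice of triple is needed, and where $q\geq3$ enters. Let $f,g\in\cF$, and let $h=f+g+c\in\cF$ be the member of $\cF$ in the coset of $f+g$. Since $q\geq3$, we can choose $s\in\F_q\setminus\{0,1\}$. Put $\alpha=s^{-1}$ and $\beta=(1-s)^{-1}$, so that $\alpha^{-1}+\beta^{-1}=1$. By the previous step, $\alpha f,\beta g\in\cF$. Apply 3-wise intersection to the triple $\alpha f,\beta g,h$. This gives a point $\mathbf a$ and a value $t$ with $\alpha f(\mathbf a)=\beta g(\mathbf a)=h(\mathbf a)=t$. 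Then $f(\mathbf a)=t/\alpha$ and $g(\mathbf a)=t/\beta$, so
\[
t=h(\mathbf a)=\frac{t}{\alpha}+\frac{t}{\beta}+c=t+c,
\]
which forces $c=0$. Hence $f+g\in\cF$.

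Finally, $0\in\cF$ by hypothesis, and $\cF$ is closed under addition and under multiplication by all scalars (the scalar $0$ is covered because $0\in\cF$). Therefore $\cF$ is an $\F_q$-linear subspace. The only delicate point in this plan is finding the coefficients $\alpha,\beta$ in the sum step; it fails exactly when $q=2$, because then no $s\in\F_q\setminus\{0,1\}$ exists.
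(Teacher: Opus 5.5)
Your proof is correct and follows essentially the same route as the paper. Scalar closure comes from triples containing $0$ and $f$. Additive closure comes from a rescaled triple whose scaling factors (your $\alpha^{-1},\beta^{-1},-1$) are nonzero and sum to zero, which is exactly where $q\geq3$ is needed. The only difference is cosmetic: you decide membership via the unique constant shift in each coset given by Lemma~\ref{lem:transversal}, whereas the paper argues by maximality (adjoining $\lambda f$ keeps the family intersecting, and $g-h\notin\cF$ would produce a nonvanishing $g-h-f$).
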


\begin{proof}
We first show that $\mathcal{F}$ is closed under scalar multiplication. Fix $f\in\cF$ and $\lambda\in\F_q$. For every $g\in\cF$, the polynomials $0,f,g$ agree at some input $\mathbf{a}\in\F_{q}^{n}$, and their common value is zero, so $f(\mathbf{a})=g(\mathbf{a})=0$; then $\lambda f$ and $g$ also agree at $\mathbf a$. Hence $\cF\cup\{\lambda f\}$ is intersecting; since $\cF$ has maximum size, we must have $\lambda f\in\cF$.

We next prove closure under differences. Suppose that $g-h\notin\cF$ for some $g,h\in\cF$. The family $\cF\cup\{g-h\}$ is not intersecting because $\cF$ already has
maximum size. Since $\cF$ itself is intersecting, some $f\in\cF$
does not intersect $g-h$; equivalently, $g-h-f$ is nonvanishing.

Since $q>2$, we may choose $\alpha,\beta\in \F_q^{\times}$ with $\alpha+\beta\neq0$. Let $\gamma=-(\alpha+\beta)$, so that
$\alpha,\beta,\gamma$ are nonzero and $\alpha+\beta+\gamma=0$. By closure under scalar multiplication, the three polynomials
\[
  \alpha^{-1}g,\qquad -\beta^{-1}h,\qquad -\gamma^{-1}f
\]
belong to $\cF$. Being three members of a 3-wise intersecting family, they agree at some
$\mathbf u\in\F_q^n$, say $\alpha^{-1}g(\mathbf u)=-\beta^{-1}h(\mathbf u)
=-\gamma^{-1}f(\mathbf u)=c$. Then
$(g-h-f)(\mathbf u)=(\alpha+\beta+\gamma)c=0$, contradicting the
nonvanishing of $g-h-f$.
\end{proof}

\begin{thm}
\label{thm:threewise-qge3}
Let $q\geq3$ and let $n,d\geq1$. Every maximum 3-wise intersecting
family in $\cP_{n,d}$ is a star.
\end{thm}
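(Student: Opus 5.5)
The plan is to reduce to a linear-algebra statement via Proposition~\ref{prop:threewise-linear}, and then to show that the resulting linear functional is an evaluation, by induction on $n$.

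\emph{Step 1 (reduction to a functional).} Let $\cF$ be a maximum 3-wise intersecting family. After subtracting a fixed member, as in Lemma~\ref{lem:normalization}, we may assume $0\in\cF$. Proposition~\ref{prop:threewise-linear} then says that $\cF$ is an $\F_q$-subspace of $\cP_{n,d}$ of size $q^{\binom{n+d}{n}-1}$, so it has codimension one. It contains no nonzero constant, hence $\cF=\ker\Psi$ for a linear functional $\Psi$ normalized by $\Psi(1)=1$. As in Corollary~\ref{cor:standard-normalization}, we may translate the variables so that $x_1,\ldots,x_n\in\cF$, i.e.\ $\Psi(x_i)=0$ for all $i$. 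The goal becomes $\Psi=\operatorname{ev}_{\mathbf 0}$, i.e.\ every $f\in\cF$ satisfies $f(\mathbf 0)=0$. Note that linearity is used here only to replace 3-wise intersection of $\cF$ by a statement about common zeros of elements of a subspace, and that this reduction does not use Theorem~\ref{thm:zero-detecting}, which is proved later.

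\emph{Step 2 (common zeros from the 3-wise property).} Since $\cF$ is a subspace, for any $f,g\in\cF$ and scalars $\alpha,\beta,\gamma$ the triple $\alpha f,\beta g,\gamma(f+g)$ lies in $\cF$. It therefore agrees at some point $\mathbf u$. Choosing the scalars so that the only consistent common value is $0$, which is possible because $q\geq3$ as in the proof of Proposition~\ref{prop:threewise-linear}, gives a common zero of $f$ and $g$. Applying this with $g=x_1$ shows that every $f\in\cF$ vanishes somewhere on the hyperplane $x_1=0$.

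\emph{Step 3 (induction on $n$).} Define a functional $\Psi'$ on $\cP_{n-1,d}$ in the variables $x_2,\ldots,x_n$ by $\Psi'(h)=\Psi(h)$, viewing $h$ as an element of $\cP_{n,d}$. I want to show that $\ker\Psi'$ is 3-wise intersecting in $\cP_{n-1,d}$, i.e.\ that any three $h_1,h_2,h_3\in\ker\Psi'$ agree at a point with $x_1=0$. The idea is to absorb the extra condition $x_1=0$ into a triple of elements of $\cF$ by adding multiples of $x_1\in\cF$, e.g.\ the triple $h_1+\lambda x_1,\ h_2+\mu x_1,\ h_3+\nu x_1$, or $h_i$ combined with $x_1$ through linear combinations. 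If this works, then $\ker\Psi'$ is a maximum 3-wise intersecting family, and induction gives $\Psi'=\operatorname{ev}_{\mathbf 0}$ on $\cP_{n-1,d}$. The base case $n=1$ should be handled directly: $\Psi$ on $\cP_{1,d}$ with $\Psi(x)=0$, together with pairwise common zeros of all $x^j-\Psi(x^j)$, forces $\Psi(x^j)=0$.

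\emph{Step 4 (monomials involving $x_1$).} Finally, I would show $\Psi(x_1 m)=0$ for every monomial $m$ of degree at most $d-1$. To do this, apply Step 2 to the pair $f=x_1m-\Psi(x_1m)$ and $g$ chosen to cut out $\{x_1=0\}$ together with the previous conclusion. At a common zero $\mathbf u$ of this pair we have $u_1=0$, so $f(\mathbf u)=-\Psi(x_1m)$, and this forces $\Psi(x_1m)=0$. Together with Step 3 this gives $\Psi(f)=f(\mathbf 0)$ for all $f$, so $\cF$ is a star.

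The main obstacle is Step 3. The 3-wise hypothesis only gives triples that agree at a common value, not necessarily at a common zero. Adding multiples of $x_1$ produces relations such as $\ell(\mathbf u)=-c$ rather than $\ell(\mathbf u)=0$, so one must select the scalars carefully to force both the common value and $x_1(\mathbf u)$ to vanish. The first thing I would try is to mimic the $\alpha+\beta+\gamma=0$ trick from Proposition~\ref{prop:threewise-linear} with triples of the form $\alpha h_1,\ \beta h_2+x_1,\ \gamma(h_1+h_2)+\delta x_1$, solving the resulting linear conditions over $\F_q$ with $q\geq3$.
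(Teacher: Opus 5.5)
Your Step 3 is a genuine gap, and the route you sketch cannot close it. Because $\cF=\ker\Psi$ is a linear space, three members $p_1,p_2,p_3$ agree at $\mathbf u$ exactly when the two members $p_2-p_1$ and $p_3-p_1$ of $\cF$ vanish at $\mathbf u$. So any single application of the 3-wise hypothesis, whatever scalars and multiples of $x_1$ you put into the triple, only gives a common zero of two elements of $\cF$. To show that $\ker\Psi'$ is 3-wise intersecting, you need, for $h_1,h_2,h_3\in\ker\Psi'$, a common zero of the three generally independent members $h_2-h_1$, $h_3-h_1$, $x_1$ of $\cF$. That is a 4-wise-type conclusion, and no choice of $\alpha,\beta,\gamma,\delta$ in one triple delivers it. Likewise, Step 2 needs no scalar trick: since $0\in\cF$, the triple $0,f,g$ already forces the common value to be $0$.

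The induction is unnecessary, because your Step 4 (and your base case) already contain the whole proof. Take $g=x_1$ itself. Then $f=x_1m-\Psi(x_1m)$ and $x_1$ both lie in $\cF$, so $0,f,x_1$ agree at some $\mathbf u$ with common value $0$. This gives $u_1=0$ and $0=f(\mathbf u)=-\Psi(x_1m)$, without using Step 3 at all. The normalization puts every $x_i$ in $\cF$, so the same argument gives $\Psi(x_im)=0$ for every $i$ and every monomial $m$ with $\deg(x_im)\le d$. Every nonconstant monomial of degree at most $d$ has this form, so $\Psi$ kills all nonconstant monomials. Together with $\Psi(1)=1$, this gives $\Psi=\operatorname{ev}_{\mathbf 0}$, so $\cF$ is the star $\{f:f(\mathbf 0)=0\}$.

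This is exactly the paper's proof. It shows that $f\in\cF$ and $\deg(fg)\le d$ imply $fg\in\cF$, using the triple $0,f,fg+c$ to force $c=0$. It then applies this with $f=x_i$ and uses linearity. So drop Step 3 and run Step 4 for each variable $x_i$.
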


\begin{proof}
Let $\cF$ be a maximum 3-wise intersecting family. After applying
Corollary~\ref{cor:standard-normalization}, we may assume that
$0,x_1,\ldots,x_n\in\cF$. By Proposition~\ref{prop:threewise-linear}, $\cF$ is an
$\F_q$-linear subspace.

We claim that $\cF$ is closed under multiplication by an arbitrary
polynomial $g$, provided the product has degree at most $d$:
\begin{equation}\label{eq:truncated-ideal}
  f\in\cF,\quad \deg(fg)\leq d
  \quad\Longrightarrow\quad fg\in\cF.
\end{equation}
To prove this, let $f\in\cF$ and let $g$ satisfy $\deg(fg)\leq d$.
Lemma~\ref{lem:transversal} gives a unique $c\in\F_q$ such that
$fg+c\in\cF$. The three polynomials $0,f,fg+c$ agree at some
$\mathbf u\in\F_q^n$. Their common value is zero:
$f(\mathbf u)=0$ and $f(\mathbf u)g(\mathbf u)+c=0$. These equalities
force $c=0$, proving \eqref{eq:truncated-ideal}.

Every nonconstant monomial $\mu$ of degree at most $d$ is divisible
by some $x_i$. Since $x_i\in\cF$, \eqref{eq:truncated-ideal} gives
$\mu\in\cF$. Since $\cF$ is linear, it contains the star $\{f\in\cP_{n,d}:f(0)=0\}$,
which has the same size as $\cF$. The two families are equal, and $\cF$ is a star.
\end{proof}

\subsection{3-wise EKR over \texorpdfstring{$\F_2$}{F2} in degree at least two}
\label{subsec:threewise-F2}

The proof of Proposition~\ref{prop:threewise-linear} uses three nonzero scalars whose sum is zero; no such scalars exist over $\F_2$. We use the following closure properties instead.

\begin{lem}\label{lem:F2-boolean-closure}
Let $\cF\subseteq\cP_{n,d}$ be a maximum 3-wise intersecting family
over $\F_2$.
\begin{enumerate}[(1)]
\item If $0,f,g\in\cF$ and $\deg f+\deg g\leq d$, then
\[
  \Span_{\F_2}\{f,g,fg\}\subseteq\cF.
\]
\item If $f,f+h\in\cF$ and $\deg g+\deg h\leq d$, then
\[
  f+gh\in\cF.
\]
\end{enumerate}
\end{lem}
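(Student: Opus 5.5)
The plan is to argue entirely through the equality case of Lemma~\ref{lem:transversal}. Over $\F_2$ the cosets modulo the constants are the pairs $\{p,p+1\}$, and a maximum 3-wise intersecting family is in particular a maximum intersecting family. So for every $p\in\cP_{n,d}$ exactly one of $p$, $p+1$ lies in $\cF$. To show $p\in\cF$ it therefore suffices to assume $p+1\in\cF$ and exhibit three (not necessarily distinct) members of $\cF$ that take no common value at any point of $\F_2^n$. The degree hypotheses only guarantee that every polynomial produced below lies in $\cP_{n,d}$.

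\textbf{Part (2).} I would prove this first since it is immediate. Suppose $f+gh+1\in\cF$, and consider the triple $f$, $f+h$, $f+gh+1$. At a common point $\mathbf u$, equality of the first two forces $h(\mathbf u)=0$. Then the third takes the value $f(\mathbf u)+1\neq f(\mathbf u)$, a contradiction. Hence $f+gh\in\cF$.

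\textbf{Part (1).} I would first collect products. Apply (2) with base polynomials $0$ and $0+f$, i.e.\ $h=f$, and multiplier $g$, $1+g$ respectively. This gives $fg\in\cF$ and $f(1+g)=f+fg\in\cF$. Symmetrically, base $0$ and $g$ gives $g(1+f)=g+fg\in\cF$. The remaining elements of $\Span_{\F_2}\{f,g,fg\}$ are $f+g$ and $f+g+fg$, and they require a genuine 3-wise argument. The bookkeeping is simplest by recording each polynomial as its value vector on the four point types $(f(\mathbf x),g(\mathbf x))\in\{00,01,10,11\}$. In this notation $f=0011$, $g=0101$, $f(1+g)=0010$ and $g(1+f)=0100$. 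We need a triple whose column on each type is nonconstant, regardless of which types actually occur.

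For $f+g$, suppose $f+g+1=1001\in\cF$. The triple $f+g+1$, $f(1+g)$, $g(1+f)$ has columns $(1,0,0)$, $(0,0,1)$, $(0,1,0)$, $(1,0,0)$ on the four types. No point gives a common value, a contradiction, so $f+g\in\cF$.

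For $f+g+fg$, suppose its complement $(1+f)(1+g)=1000\in\cF$. The triple $(1+f)(1+g)$, $f$, $g$ has columns $(1,0,0)$, $(0,0,1)$, $(0,1,0)$, $(0,1,1)$, again never constant. Hence $f+g+fg\in\cF$, and with $0,f,g\in\cF$ this exhausts the span.

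The main obstacle is choosing these triples. Naive choices involving $0$ fail, because $0$ and $f+g+1$ agree wherever $f=g$. I expect the fix to be establishing $f+fg$ and $g+fg$ first, so that suitable third and second members become available.
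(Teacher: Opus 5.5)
Your proof is correct and essentially matches the paper's. It uses the same complementary-pair consequence of Lemma~\ref{lem:transversal} and the same witnessing triples: $0,f,fg+1$ and $0,f,f+fg+1$ for the products, $f+fg,\ g+fg,\ f+g+1$ for $f+g$, and $f,g,f+g+fg+1$ for $f+g+fg$. The differences are only presentational: you prove (2) first and obtain $fg$, $f+fg$, $g+fg$ from it, and you verify the triples with value vectors on the four $(f,g)$-types rather than by direct evaluation.
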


\begin{proof}
By Lemma~\ref{lem:transversal}, exactly one of $u$ and $u+1$ belongs
to $\cF$ for every $u\in\cP_{n,d}$.

For part~(1), suppose first that $fg\notin\cF$. Then $fg+1\in\cF$, so the three polynomials $0,f,fg+1$ agree at some input $\mathbf{a}\in\F_{2}^{n}$. Their common value is zero, but $f(\mathbf a)=0$
gives $(fg+1)(\mathbf a)=1$, a contradiction. This proves that $fg\in\cF$. If $fg+f\notin\cF$, then $fg+f+1\in\cF$. Now, the three polynomials $0,f,fg+f+1$ agree at some input $\mathbf{b}\in\F_{2}^{n}$; we have $f(\mathbf b)=0$ but $(fg+f+1)(\mathbf b)=1$, another contradiction. We have proved that $fg,fg+f\in\cF$; symmetry gives $fg+g\in\cF$.

If $f+g+fg\notin\cF$, then $f,g,f+g+fg+1$ agree at some input $\mathbf{a}\in\F_{2}^{n}$. Writing $c=f(\mathbf a)=g(\mathbf a)$, the third polynomial takes the value $c+c+c^2+1=c+1$ there, a contradiction. It remains to prove that $f+g\in\cF$. Otherwise, $f+g+1\in\cF$, and $f+fg$, $g+fg$, and $f+g+1$ agree at some input $\mathbf{b}\in\F_{2}^{n}$. Equality of the first two values gives $f(\mathbf b)=g(\mathbf b)$. The first polynomial then has value $f(\mathbf b)+f(\mathbf b)^2=0$, whereas the third has value $1$, a contradiction. This proves part~(1).

For part~(2), suppose that $f+gh\notin\cF$. Then $f+gh+1\in\cF$. Then $f,f+h,f+gh+1$ agree at some input $\mathbf{a}\in\F_{2}^{n}$. Subtracting the value of $f(\mathbf{a})$ gives simultaneously $h(\mathbf{a})=0$ and $g(\mathbf{a})h(\mathbf{a})+1=0$, which is impossible.
\end{proof}

\begin{thm}\label{thm:threewise-F2}
Let $n\geq 1$ and $d\geq2$. Every maximum 3-wise intersecting family
in $\cP_{n,d}$ over $\F_2$ is a star.
\end{thm}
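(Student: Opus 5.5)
Normalize first with Corollary~\ref{cor:standard-normalization} (take $V=\cP_{n,d}$), so that $0,x_1,\ldots,x_n\in\cF$. The aim is to show that $\cF$ contains every monomial $\mathbf x^{\boldsymbol\alpha}$ with $1\le|\boldsymbol\alpha|\le d$, and in fact that $\cF$ is closed under sums. It then contains the star $\{f:f(0)=0\}$, which has the same size as $\cF$, so the two coincide.

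The first step is to build up monomials. By Lemma~\ref{lem:F2-boolean-closure}(1) with $f=x_i$ and $g=x_j$ (degrees $1+1\le d$, using $d\ge2$), the family contains $x_ix_j$, $x_i+x_j$, and $x_i+x_j+x_ix_j$; taking $i=j$ also gives $x_i^2$. Next, Lemma~\ref{lem:F2-boolean-closure}(2) with $f=0$ and $h=x_i$ gives $x_ig\in\cF$ for every $g$ of degree at most $d-1$. Every nonconstant monomial of degree at most $d$ has this form, so all such monomials lie in $\cF$.

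The second step is to get closure under addition for polynomials vanishing at $0$, that is, to show $\cF\supseteq V_0=\{f:f(0)=0\}$. The tool is part~(2) in the general form: if $f\in\cF$ and $f+x_i\in\cF$, then $f+x_ig\in\cF$ for all $g$ with $\deg g\le d-1$. I would argue by induction on the number of monomials in $f\in V_0$, showing that $f\in\cF$ and that $f+x_i\in\cF$ for every $i$. Given $f\in\cF$ with $f+x_i\in\cF$, adding a monomial $x_i\mu$ keeps us in $\cF$. To continue the induction we then need $f+x_i\mu+x_j\in\cF$ for each $j$. I would try to obtain this by applying part~(2) with base point $f+x_j$ (already in $\cF$), which requires $f+x_j+x_i\in\cF$.

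So the hypothesis to carry through the induction is: for $f$ in the current set, $f+\ell\in\cF$ for every linear form $\ell$ with zero constant term. For $f=0$ this hypothesis comes from part~(1) iterated: starting from $x_i+x_j$ and applying part~(1) to $f=x_i+x_j$ and $g=x_k$ produces sums of more variables. Part~(2), with $h$ the difference of two linear forms each added to $f$, then propagates the hypothesis. Applying part~(2) to $f+\ell$ and $f+\ell+x_i$ yields $f+\ell+x_i\mu$ for all $\ell$, which is exactly the hypothesis again for $f+x_i\mu$.

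The main obstacle is this bookkeeping. The crux is getting all linear sums $\ell\in\cF$ and then running the induction cleanly. If part~(1) iteration fails to give sums directly, I would fall back on the 3-wise property: for $\ell,\ell'\in\cF$, if $\ell+\ell'+1\in\cF$ then $\ell,\ell',\ell+\ell'+1$ take a common value $c$ at some point, which forces $c=c+c+1$, that is, $c=1$. This is not immediately a contradiction, so I would combine it with $0\in\cF$ through the triple $0,\ell,\ell+\ell'$ instead.
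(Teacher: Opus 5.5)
Your argument is correct and is essentially the paper's proof: you iterate Lemma~\ref{lem:F2-boolean-closure}(1) to put every linear form into $\cF$, then carry the invariant ``$f+\ell\in\cF$ for every linear form $\ell$'' through repeated applications of part~(2) with $h=x_i$; this is exactly the paper's induction on $w+\sum_i s_ix_i$, except that you add one monomial at a time rather than one variable at a time. The separate monomial-building step and the 3-wise fallback in your last paragraph are unnecessary, since the main line already closes the proof.
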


\begin{proof}
Let $\cF$ be a maximum 3-wise intersecting family. After applying
Corollary~\ref{cor:standard-normalization}, we may assume that
$0,x_1,\ldots,x_n\in\cF$. Repeatedly applying
Lemma~\ref{lem:F2-boolean-closure}\textup{(1)} to the coordinate
polynomials shows that every linear form belongs to $\cF$.

We claim that, for $0\leq r\leq n$, every linear form $w$ and all
polynomials $s_1,\ldots,s_r$ of degree at most $d-1$ satisfy
\begin{equation}\label{eq:F2-sum-products}
  w+\sum_{i=1}^r s_ix_i\in\cF.
\end{equation}
We argue by induction on $r$. The case $r=0$ was just proved. Assume
the claim for $r-1$, and put
\[
  p=w+\sum_{i=1}^{r-1}s_ix_i.
\]
The induction hypothesis, applied once with $w$ and once with
$w+x_r$, gives $p,p+x_r\in\cF$. Applying
Lemma~\ref{lem:F2-boolean-closure}\textup{(2)} with $f=p$, $h=x_r$,
and $g=s_r$ gives $p+s_rx_r\in\cF$, proving
\eqref{eq:F2-sum-products}.

Let $f\in\cP_{n,d}$ have zero constant term and linear part $w$. For each monomial term of $f$ of degree at least two, choose a variable $x_i$ dividing it. Collecting the monomials by their chosen variable and factoring $x_i$ out of the $i$th group gives
\[
  f=w+\sum_{i=1}^n s_ix_i,
\]
where $\deg s_i\leq d-1$. Equation~\eqref{eq:F2-sum-products} gives $f\in\cF$. The polynomials with zero constant term form a star of the same size as $\cF$, so the two families are equal.
\end{proof}

\subsection{4-wise EKR for linear polynomials over \texorpdfstring{$\F_2$}{F2}}
\label{subsec:affine-F2}

Theorems~\ref{thm:threewise-qge3} and~\ref{thm:threewise-F2} leave open only the case $d=1$ over $\F_2$, which is exceptional.

\begin{thm}\label{thm:affine-F2}
Let $n\geq1$. The following statements hold for families in $\cP_{n,1}$ over $\F_2$.
\begin{enumerate}[(1)]
\item Pairwise intersection implies 3-wise intersection.
\item Every maximum 3-wise intersecting family is a star if and only
if $n=1$.
\item Every maximum 4-wise intersecting family is a star.
\end{enumerate}
\end{thm}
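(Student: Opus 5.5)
Over $\F_2$, a polynomial in $\cP_{n,1}$ has the form $f=\mathbf a\cdot\mathbf x+c$. It is nonvanishing exactly when $\mathbf a=\mathbf 0$ and $c=1$: a nonconstant affine polynomial over $\F_2$ takes both values. The whole argument will rest on this single fact, combined with Lemma~\ref{lem:degree-one}. That lemma says a maximum intersecting family is $\cF_\varphi=\{\mathbf a\cdot\mathbf x+\varphi(\mathbf a)\}$ for an arbitrary $\varphi\colon\F_2^n\to\F_2$.

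\textbf{Part (1).} Let $f_i=\mathbf a_i\cdot\mathbf x+c_i$ ($i=1,2,3$) be pairwise intersecting, so $f_i$ and $f_j$ have distinct linear parts unless they are equal. The three agree at some point if and only if the system $(\mathbf a_1+\mathbf a_2)\cdot\mathbf x=c_1+c_2$, $(\mathbf a_1+\mathbf a_3)\cdot\mathbf x=c_1+c_3$ is solvable. If $\mathbf a_1+\mathbf a_2$ and $\mathbf a_1+\mathbf a_3$ are linearly independent, the system is always solvable. If they are dependent over $\F_2$, then two of the $\mathbf a_i$ coincide (using that each vector is nonzero, or else equal to zero), so two of the $f_i$ are equal, and the system reduces to pairwise intersection. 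I will check these small cases directly.

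\textbf{Part (2).} Since $3$-wise intersecting is then the same as intersecting, this reduces to Proposition~\ref{prop:degree-one} with $q=2$: stars are the only maximum families precisely when $n=1$.

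\textbf{Part (3).} Take a maximum $4$-wise intersecting family $\cF_\varphi$. Normalize as in Corollary~\ref{cor:standard-normalization} so that $\varphi(\mathbf 0)=0$ and $\varphi(e_i)=0$ for all $i$. The goal is to show $\varphi$ is linear, which gives the star condition of Lemma~\ref{lem:degree-one} (with $\mathbf s$ determined by $\varphi$ on the basis). Take the four members with linear parts $\mathbf 0$, $\mathbf a$, $\mathbf b$, $\mathbf a+\mathbf b$, where $\mathbf a,\mathbf b$ are distinct and nonzero. A common point $\mathbf x$ gives
\[
\varphi(\mathbf 0)=\mathbf a\cdot\mathbf x+\varphi(\mathbf a)=\mathbf b\cdot\mathbf x+\varphi(\mathbf b)=(\mathbf a+\mathbf b)\cdot\mathbf x+\varphi(\mathbf a+\mathbf b).
\]
Adding the first three expressions and comparing with the fourth yields $\varphi(\mathbf a+\mathbf b)=\varphi(\mathbf a)+\varphi(\mathbf b)+\varphi(\mathbf 0)$. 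The degenerate cases $\mathbf a=\mathbf b$ or $\mathbf 0\in\{\mathbf a,\mathbf b\}$ hold trivially. So $\varphi+\varphi(\mathbf 0)$ is additive, hence $\F_2$-linear, and $\cF_\varphi$ is a star.

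The main obstacle is the careful case analysis in part (1), namely the degenerate configurations where linear parts coincide. Part (3) is essentially a one-line computation once the four-element configuration is chosen.
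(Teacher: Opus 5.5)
Your proposal is correct and essentially matches the paper's proof. Part (2) is the same reduction to Proposition~\ref{prop:degree-one}, and part (3) is the same additivity computation on the members indexed by $\mathbf 0,\mathbf a,\mathbf b,\mathbf a+\mathbf b$; the paper also spells out that a maximum 4-wise family has size $2^n$ and so has the form $\cF_\varphi$, which you use tacitly. In part (1) you argue via the rank of $\mathbf a_1+\mathbf a_2,\mathbf a_1+\mathbf a_3$ (the fact you need is that two vectors over $\F_2$ are dependent only if one is zero or they coincide), whereas the paper shifts to $0,g,h$ and notes that an inconsistent system $g=h=0$ would force one of $g,h,g+h$ to be the constant $1$; these are two sides of the same linear-algebra fact.
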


\begin{proof}

(1) Let $\cF$ be an intersecting family, and choose
$f_1,f_2,f_3\in\cF$. Put $g=f_2-f_1$ and $h=f_3-f_1$. By
Lemma~\ref{lem:normalization}, the family $\cF-f_1$ is intersecting
and contains $0,g,h$. It is enough to show that $g$ and $h$ have a
common zero.

Pairwise intersection gives a zero of each of $g$, $h$, and $g+h$,
so none is the constant polynomial $1$. If the equations $g=0$ and
$h=0$ were inconsistent, then $1$ would lie in their
$\F_2$-linear span, forcing one of $g,h,g+h$ to equal $1$, a
contradiction.

(2) By part~\textup{(1)}, pairwise and 3-wise intersection are equivalent
for families in $\cP_{n,1}$ over $\F_2$. The assertion follows from
Proposition~\ref{prop:degree-one}.

(3) Let $\cF$ be a maximum 4-wise intersecting family. A star is 4-wise
intersecting and has size $2^n$, while Lemma~\ref{lem:transversal} bounds every
intersecting family by $2^n$. Maximality gives $|\cF|=2^n$, and equality in the
lemma gives exactly one member with each linear part:
$\cF=\{\mathbf a\cdot\mathbf x+\varphi(\mathbf a):\mathbf a\in\F_2^n\}$ for some
function $\varphi\colon\F_2^n\to\F_2$. After subtracting $\varphi(0)$ from every
member, we may assume that $\varphi(0)=0$.

For any $\mathbf a,\mathbf b\in\F_2^n$, the $4$ members indexed by $0,\mathbf a,\mathbf b,\mathbf a+\mathbf b$ agree at some $\mathbf u\in\F_2^n$, and the common value is zero because the member indexed by $0$ is the zero polynomial. Adding $\mathbf a\cdot\mathbf u+\varphi(\mathbf a)=0$ and $\mathbf b\cdot\mathbf u+\varphi(\mathbf b)=0$, and comparing with $(\mathbf a+\mathbf b)\cdot\mathbf u+\varphi(\mathbf a+\mathbf b)=0$, gives $\varphi(\mathbf a+\mathbf b)=\varphi(\mathbf a)+\varphi(\mathbf b)$. So $\varphi$ is linear, say $\varphi(\mathbf a)=\mathbf a\cdot\mathbf s$, and the normalized family is $\cF=\{\mathbf a\cdot(\mathbf x+\mathbf s):\mathbf a\in\F_2^n\}$, the star centered at $(\mathbf s,0)$. Undoing the constant translation shows that the original family is also a star.
\end{proof}

Theorem~\ref{thm:main}(2) summarizes the results in this section.

\begin{proof}[Proof of Theorem~\ref{thm:main}(2)]
For 3-wise intersection, Theorem~\ref{thm:threewise-qge3} covers $q\geq3$, and Theorem~\ref{thm:threewise-F2} covers $q=2$ and $d\geq 2$. The remaining case, $q=2$ and $d=1$, is described by Theorem~\ref{thm:affine-F2}, which also gives the 4-wise assertion. 
\end{proof}

\section{Norm forms and nonvanishing polynomials}\label{sec:span}

This section studies spanning properties of norm forms and nonvanishing polynomials in $\cP_{n,d}$. Besides proving Theorem~\ref{thm:span}, we develop additional technical tools used in Section~\ref{sec:qge3-pairwise}.

For an integer $d\geq 2$, write $\Nm=\Nm_{\F_{q^d}/\F_q}\colon \F_{q^d}\to \F_q$ for the norm map. Its restriction $\Nm\colon \F_{q^d}^\times\to\F_q^\times$ is
surjective, and every fiber has size $(q^d-1)/(q-1)=1+q+\cdots+q^{d-1}$. For
$\boldsymbol{\alpha}=(\alpha_1,\ldots,\alpha_n)\in \F_{q^d}^n$, define the homogeneous degree-$d$ form
\begin{equation}\label{eq:norm-form}
  H_{\boldsymbol{\alpha}}(x_1,\ldots,x_n)
  =\Nm(\alpha_1x_1+\cdots+\alpha_nx_n)
  =\prod_{j=0}^{d-1}
     (\alpha_1^{q^j}x_1+\cdots+\alpha_n^{q^j}x_n).
\end{equation}
Observe that  $H_{\boldsymbol{\alpha}}(x_1,\ldots,x_n)\in\cH_{n,d}$. Indeed, the $q$-power Frobenius map permutes the factors of the product cyclically and so fixes each coefficient of $H_{\boldsymbol{\alpha}}$.

\begin{defn}
We call $H_{\boldsymbol{\alpha}}\in \cH_{n,d}$ in \eqref{eq:norm-form} a \emph{proper norm form} if $\Span_{\F_q}\{\alpha_1,\ldots,\alpha_n\}$ is a proper subspace of $\F_{q^d}$.
\end{defn}

First, we prove that every proper norm form can be turned into a nonvanishing polynomial after adding a lower-degree term whose value at a prescribed point can be specified in advance.

\begin{lem} \label{lem:nonvanishing-completion}
Let $H$ be a proper norm form of degree $d$. For every $\mathbf b\in\F_q^n$ and every $\delta\in\F_q$ with $\delta\neq-H(\mathbf b)$, there is $R\in\cP_{n,d-1}$ such that $H+R$ is nonvanishing and $R(\mathbf b)=\delta$.
\end{lem}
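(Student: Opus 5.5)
The plan is to take $R$ to be the lower-degree part of a shifted norm form. Write $H=H_{\boldsymbol\alpha}=\Nm(L)$ with $L(\mathbf x)=\alpha_1x_1+\cdots+\alpha_nx_n$, and put $U=\Span_{\F_q}\{\alpha_1,\ldots,\alpha_n\}$, a proper $\F_q$-subspace of $\F_{q^d}$. For $\mathbf x\in\F_q^n$ we have $L(\mathbf x)\in U$. Hence for any $\gamma\in\F_{q^d}\setminus U$ the element $L(\mathbf x)+\gamma$ is nonzero for every $\mathbf x\in\F_q^n$. Since the norm of a nonzero element is nonzero, the polynomial
\[
G_\gamma(\mathbf x)=\Nm\bigl(L(\mathbf x)+\gamma\bigr)=\prod_{j=0}^{d-1}\bigl(\alpha_1^{q^j}x_1+\cdots+\alpha_n^{q^j}x_n+\gamma^{q^j}\bigr)
\]
is nonvanishing on $\F_q^n$.

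First I check that $G_\gamma$ has coefficients in $\F_q$ and top part $H$. As for \eqref{eq:norm-form}, the $q$-power Frobenius permutes the factors cyclically, so $G_\gamma\in\F_q[\mathbf x]$. Its degree-$d$ homogeneous part is exactly $H$. So $R_\gamma:=G_\gamma-H\in\cP_{n,d-1}$ and $H+R_\gamma=G_\gamma$ is nonvanishing.

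Next I prescribe the value at $\mathbf b$. We have $R_\gamma(\mathbf b)=\Nm(L(\mathbf b)+\gamma)-H(\mathbf b)$. So the condition $R_\gamma(\mathbf b)=\delta$ becomes $\Nm(\beta)=c$, where $\beta=L(\mathbf b)+\gamma$ and $c=H(\mathbf b)+\delta$. The hypothesis $\delta\ne -H(\mathbf b)$ says exactly that $c\in\F_q^\times$. Because $L(\mathbf b)\in U$, the map $\gamma\mapsto L(\mathbf b)+\gamma$ is a bijection of $\F_{q^d}\setminus U$ onto itself. It therefore suffices to find $\beta\notin U$ with $\Nm(\beta)=c$, and then set $\gamma=\beta-L(\mathbf b)$.

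The only real step is showing that such a $\beta$ exists, which I do by counting. The fiber $\Nm^{-1}(c)$ has $1+q+\cdots+q^{d-1}>q^{d-1}$ elements. Since $U$ is a proper subspace, $|U|\le q^{d-1}$, so the fiber cannot be contained in $U$. Choosing $\beta$ in the fiber but outside $U$ completes the construction.
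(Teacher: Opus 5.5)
Your proof is correct and is essentially the paper's own argument. The paper also picks $t\notin U$ with $\Nm(t)=\delta+\Nm(u)$, where $u=L(\mathbf b)$ (your $\beta$), and sets $R=\Nm(t-u+L(\mathbf x))-H(\mathbf x)$ (your choice $\gamma=\beta-L(\mathbf b)$), using the same count $(q^d-1)/(q-1)>q^{d-1}\geq|U|$.
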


\begin{proof}
Let $H(\mathbf{x})=\Nm(\alpha_1x_1+\cdots+\alpha_nx_n)$. Put $U=\Span_{\F_q}\{\alpha_1,\ldots,\alpha_n\}$ and $u= b_1\alpha_1+\cdots+b_n\alpha_n$. Since $U$ is a proper subspace, each nonzero norm fiber has size $\frac{q^{d}-1}{q-1}>q^{d-1}\geq |U|$. Using $\delta+\Nm(u)\neq 0$, we choose $t\notin U$ with
$\Nm(t)=\delta+\Nm(u)$. Define
\[
  R(\mathbf x)
  =\Nm\!\left(t-u+\alpha_1x_1+\cdots+\alpha_nx_n\right)
   -H(\mathbf x).
\]
The leading homogeneous parts cancel, so $R\in\cP_{n,d-1}$. Moreover, $t-u\notin U$, so $H+R$ is nonvanishing, and $R(\mathbf b)=\Nm(t)-\Nm(u)=\delta$.
\end{proof}

The next three results concern spanning properties of norm forms.

\begin{lem}\label{lem:norm-coefficient-separation}
Let $s\geq1$, let $\mathbf y=(y_1,\ldots,y_s)$, and let
$\boldsymbol{\beta}=(\beta_1,\ldots,\beta_s)\in \F_{q^d}^s$. As $\boldsymbol{\beta}$ ranges over $\F_{q^d}^s$, the forms $\Nm(\beta_1y_1+\cdots+\beta_sy_s)$ span $\cH_{s,d}$.
\end{lem}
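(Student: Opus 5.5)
The plan is a duality argument. Let $V\subseteq\cH_{s,d}$ be the span of the forms $\Nm(\beta_1y_1+\cdots+\beta_sy_s)$. Suppose $V\neq\cH_{s,d}$. Then there is a nonzero $\F_q$-linear functional $\lambda\colon\cH_{s,d}\to\F_q$ vanishing on $V$. We may write $\lambda(F)=\sum_{|\boldsymbol{\gamma}|=d}c_{\boldsymbol{\gamma}}[\mathbf y^{\boldsymbol{\gamma}}]F$ with not all $c_{\boldsymbol{\gamma}}$ zero. The goal is to show that $\beta\mapsto\lambda(\Nm(\beta\cdot\mathbf y))$, viewed as a function on $\F_{q^d}^s$, is given by a nonzero polynomial of low enough degree in each variable. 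Then Lemma~\ref{lem:grid} produces a contradiction.

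\textbf{Expanding the norm form.} Expand
\[
\Nm(\beta\cdot\mathbf y)=\prod_{j=0}^{d-1}\Bigl(\sum_{i=1}^s\beta_i^{q^j}y_i\Bigr)=\sum_{\iota\colon\{0,\ldots,d-1\}\to[s]}\ \prod_{j=0}^{d-1}\beta_{\iota(j)}^{q^j}\,y_{\iota(j)}.
\]
So the coefficient of $\mathbf y^{\boldsymbol{\gamma}}$ is the sum over maps $\iota$ with fiber sizes $\gamma_i$ of the monomial $\prod_i\beta_i^{e_i(\iota)}$, where $e_i(\iota)=\sum_{j\in\iota^{-1}(i)}q^j$.

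\textbf{Distinct maps give distinct monomials.} Distinct maps $\iota$ give distinct exponent vectors $(e_1,\ldots,e_s)$, since the base-$q$ digits of $e_i$ record exactly the set $\iota^{-1}(i)$. Each $e_i\leq q^0+\cdots+q^{d-1}<q^d$. Hence, as polynomials in $\F_{q^d}[\beta_1,\ldots,\beta_s]$,
\[
P(\beta):=\lambda(\Nm(\beta\cdot\mathbf y))=\sum_{\iota}c_{\boldsymbol{\gamma}(\iota)}\prod_i\beta_i^{e_i(\iota)}
\]
has distinct monomials for distinct $\iota$, with coefficients $c_{\boldsymbol{\gamma}(\iota)}$ in the subfield $\F_q\subseteq\F_{q^d}$. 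Some $c_{\boldsymbol{\gamma}}\neq0$ and some $\iota$ has fiber type $\boldsymbol{\gamma}$, so $P$ is a nonzero polynomial with $\deg_{\beta_i}P<q^d$.

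\textbf{Contradiction.} $P$ vanishes on all of $\F_{q^d}^s$ because $\lambda$ kills $V$. Lemma~\ref{lem:grid}, applied over $\mathbb F=\F_{q^d}$, then forces $P=0$, a contradiction.

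\textbf{Main obstacle.} The only delicate point is justifying that the formal polynomial identity holds with $\lambda$ applied coefficientwise. For this, $\lambda$ is extended $\F_{q^d}$-linearly via the same formula. This is harmless, since the coefficients of the form $\Nm(\beta\cdot\mathbf y)$ lie in $\F_q$ anyway. The digit-uniqueness observation is what guarantees that no cancellation occurs.
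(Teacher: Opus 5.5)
Your proposal is correct and is essentially the paper's argument. The paper shows that the coefficient functions $c_{\mathbf e}$ are linearly independent, since distinct $\mathbf e$ share no monomial, each has degree below $q^d$ in every variable, and Lemma~\ref{lem:grid} applies; it then concludes that the evaluation matrix has full column rank, which is just the dual of your statement that no nonzero functional $\lambda$ annihilates all the norm forms.
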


\begin{proof}
The coefficients of a norm form $\Nm(\beta_1y_1+\cdots+\beta_sy_s)$ can be computed explicitly. For a multi-index $\mathbf e=(e_1,\ldots,e_s)$ with
$|\mathbf e|=d$, the coefficient of
$\mathbf y^{\mathbf e}$ is
\begin{equation}\label{eq:coefficient-polynomial}
 c_{\mathbf e}(\boldsymbol{\beta})=\sum_{\substack{I_1\sqcup\cdots\sqcup I_s=\{0,\ldots,d-1\}\\
                   |I_i|=e_i}}
    \prod_{i=1}^s
    \beta_i^{\sum_{j\in I_i}q^j}.
\end{equation}
We view $c_{\mathbf{e}}$ as a polynomial in variables $\beta_1, \beta_2, \ldots, \beta_s$. The base-$q$ expansion of the exponent
$\sum_{j\in I_i}q^j$ recovers the set $I_i$. If $\mathbf e\neq\mathbf f$, then $c_{\mathbf e}$ and $c_{\mathbf f}$ do not share any common monomial in $\beta_1, \ldots, \beta_{s}$. It follows that the polynomials $c_{\mathbf{e}}$ are $\F_q$-linearly independent. Moreover, the degree of $c_{\mathbf{e}}$ in each variable is at most $\sum_{j=0}^{d-1}q^j=\frac{q^d-1}{q-1}<q^d$. If an $\F_q$-linear combination of them vanishes on $\F_{q^d}^s$, then this combination is the zero polynomial by  Lemma~\ref{lem:grid}. Thus, the functions $c_{\mathbf{e}}\colon\F_{q^d}^s\to\F_q$ with $|\mathbf{e}|=d$ are linearly independent. The matrix $\bigl(c_{\mathbf{e}}(\boldsymbol\beta)\bigr)_{\boldsymbol\beta\in \F_{q^d}^s,\,|\mathbf{e}|=d}$ has full column rank, which coincides with $\dim_{\F_q}\cH_{s,d}$. Its row indexed by $\boldsymbol\beta$ is the coefficient vector of $\Nm(\beta_1y_1+\cdots+\beta_sy_s)$, so these norm forms span $\cH_{s,d}$.
\end{proof}

\begin{prop}
\label{prop:proper-norm-generation}
If $q$ is odd or $d>n$, then the proper norm forms span $\cH_{n,d}$ and are closed under multiplication by elements of $\F_q^\times$.
\end{prop}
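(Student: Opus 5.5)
For closure under scalars, take $c\in\F_q^\times$ and a proper norm form $H_{\boldsymbol\alpha}(\mathbf x)=\Nm(\alpha_1x_1+\cdots+\alpha_nx_n)$. By surjectivity of $\Nm\colon\F_{q^d}^\times\to\F_q^\times$ there is $\gamma\in\F_{q^d}^\times$ with $\Nm(\gamma)=c$. Then multiplicativity of the norm gives
\[
cH_{\boldsymbol\alpha}=\Nm(\gamma\alpha_1x_1+\cdots+\gamma\alpha_nx_n)=H_{\gamma\boldsymbol\alpha}.
\]
Moreover $\Span_{\F_q}\{\gamma\alpha_i\}=\gamma\,\Span_{\F_q}\{\alpha_i\}$ is still a proper subspace of $\F_{q^d}$. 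So $cH_{\boldsymbol\alpha}$ is again a proper norm form.

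For spanning, I will use substitution into few variables. Let $s\le d-1$ and let $\ell_1,\ldots,\ell_s$ be linear forms over $\F_q$ in $x_1,\ldots,x_n$. For $\boldsymbol\beta\in\F_{q^d}^s$, the form $\Nm(\beta_1\ell_1(\mathbf x)+\cdots+\beta_s\ell_s(\mathbf x))$ is a norm form $H_{\boldsymbol\alpha}$. Each $\alpha_j$ lies in $\Span_{\F_q}\{\beta_1,\ldots,\beta_s\}$, which has dimension at most $s<d$, so $H_{\boldsymbol\alpha}$ is proper. By Lemma~\ref{lem:norm-coefficient-separation} (applied with $s$ variables $y_i$), these forms span every polynomial $h(\ell_1,\ldots,\ell_s)$ with $h\in\cH_{s,d}$. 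Hence the span of the proper norm forms contains every degree-$d$ form that can be written as a form in at most $d-1$ linear forms.

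The case $d>n$ is immediate from this. Take $s=n$ and $\ell_i=x_i$: every norm form is proper since $n<d$, and Lemma~\ref{lem:norm-coefficient-separation} gives all of $\cH_{n,d}$.

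Now suppose $q$ is odd and $d\le n$. It suffices to produce every monomial $\mathbf x^{\mathbf e}$ with $|\mathbf e|=d$.
\begin{enumerate}[(a)]
\item If $\mathbf x^{\mathbf e}$ involves at most $d-1$ distinct variables, it is a form in those coordinate variables, so it is in the span.
\item Otherwise $\mathbf x^{\mathbf e}=x_I$ with $|I|=d$. Relabel so that $I=\{1,\ldots,d\}$.
\end{enumerate}
In case (b), use the $d-1$ linear forms $\ell_1=x_1+x_2$ and $\ell_i=x_{i+1}$ for $2\le i\le d-1$. Then
\[
(x_1+x_2)^2x_3\cdots x_d=x_1^2x_3\cdots x_d+2x_1x_2\cdots x_d+x_2^2x_3\cdots x_d .
\]
The left side lies in the span, and the two outer monomials each involve only $d-1$ variables, so they lie in the span by (a). Since $2$ is invertible for $q$ odd, $x_1\cdots x_d$ lies in the span. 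The same identity works for $d=2$, where it reads $(x_1+x_2)^2=x_1^2+2x_1x_2+x_2^2$.

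The main obstacle is the squarefree degree-$d$ monomials. The polarization step above is exactly where the coefficient $2$ appears, and this is consistent with Theorem~\ref{thm:span}(2), where these monomials are genuinely missing in even characteristic.
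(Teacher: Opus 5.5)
Your proof is correct. The scalar-closure argument, the case $d>n$, and the treatment of monomials supported on at most $d-1$ variables are essentially the paper's. The difference lies in how you obtain the squarefree monomial $x_1\cdots x_d$ when $q$ is odd and $d\le n$.

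The paper takes the single family $\Nm(tx_1+tx_2+\alpha_3x_3+\cdots+\alpha_dx_d)$. It computes that the $x_1\cdots x_d$-coefficient $\Gamma(t,\alpha_3,\ldots,\alpha_d)$ contains the term $2t^{1+q}\prod_{i\ge 3}\alpha_i^{q^{i-1}}$. It then invokes Lemma~\ref{lem:grid} a second time to choose parameters with $\Gamma\neq 0$, and subtracts the non-squarefree monomials.

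You instead observe a general principle: pulling back along $\F_q$-linear forms $\ell_1,\ldots,\ell_s$ with $s\le d-1$ commutes with the norm, because the $\ell_i$ have coefficients in $\F_q$, and it keeps the parameters inside a space of dimension at most $s<d$. Hence Lemma~\ref{lem:norm-coefficient-separation} in $s$ variables gives every $h(\ell_1,\ldots,\ell_s)$. The polarization identity for $(x_1+x_2)^2x_3\cdots x_d$ then finishes the argument.

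The paper's family is exactly the pullback along your choice $\ell_1=x_1+x_2$, $\ell_i=x_{i+1}$, so the two arguments are close relatives. Your version replaces the explicit coefficient computation and the existence argument with a reusable structural statement plus a one-line identity: proper norm forms span all degree-$d$ forms expressible in at most $d-1$ linear forms. The paper's version instead produces a concrete proper norm form with nonzero squarefree coefficient. In both, the factor $2$ is the only obstruction, which matches Theorem~\ref{thm:span}(2).
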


\begin{proof}
If $d>n$, then every norm form is proper, since its parameters span a subspace of $\F_{q^d}$ of dimension at most $n<d=[\F_{q^d}:\F_q]$. The spanning assertion follows from Lemma~\ref{lem:norm-coefficient-separation}.

Assume from now on that $q$ is odd and $d\leq n$. Let $W_{\mathrm{prop}}$ be the span of the proper norm forms. Every degree-$d$ monomial supported on fewer than $d$ variables belongs to $W_{\mathrm{prop}}$. Indeed, let $I\subseteq[n]$ be its support. The norm forms in the variables $(x_i)_{i\in I}$ are proper, since $|I|<d$, and by Lemma~\ref{lem:norm-coefficient-separation} they span all degree-$d$ forms in those variables.

It remains to demonstrate that each squarefree degree-$d$ monomial is in $W_{\mathrm{prop}}$. After
relabeling, it suffices to show $x_1\cdots x_d \in W_{\mathrm{prop}}$. For
$t,\alpha_3,\ldots,\alpha_d\in \F_{q^d}$, the norm form
\[
H=\Nm(tx_1+tx_2+\alpha_3x_3+\cdots+\alpha_dx_d)
\]
is proper, because its parameters span a space of dimension at most $d-1$. View $t,\alpha_3,\ldots,\alpha_d$ as variables. Note that $\Gamma(t,\alpha_3,\ldots,\alpha_d):=[x_1\cdots x_d] H$ is a polynomial whose degree in each variable is smaller than $q^d$, and it contains the term $2t^{1+q}\prod_{i=3}^d\alpha_i^{q^{i-1}}$. Since $q$ is odd, this coefficient does not vanish. Thus, $\Gamma$ is a nonzero polynomial and Lemma~\ref{lem:grid} gives a choice of parameters for which $\Gamma\neq0$. Every monomial in $H$ other than $x_1\cdots x_d$ is not squarefree and belongs to $W_{\mathrm{prop}}$ by the previous paragraph. Hence $x_1\cdots x_d\in W_{\mathrm{prop}}$. 

Finally, if $\lambda\in\F_q^\times$ and
$H=\Nm(\alpha_1x_1+\cdots+\alpha_nx_n)$ is proper, choose
$u\in \F_{q^d}^\times$ with $\Nm(u)=\lambda$. Then $\lambda H=\Nm(u\alpha_1x_1+\cdots+u\alpha_nx_n)$ and the norm form on the right is again proper.
\end{proof}

\begin{lem}\label{lem:affine-basis-norm-span}
Let $q\geq 3$. As $(\alpha_0,\ldots,\alpha_n)$ ranges over the ordered
$\F_q$-bases of $\F_{q^{n+1}}$, the nonvanishing polynomials $\Nm_{\F_{q^{n+1}}/\F_q}(\alpha_0+\alpha_1x_1+\cdots+\alpha_nx_n)$ span $\cP_{n,n+1}$ and are closed under multiplication by elements of
$\F_q^\times$.
\end{lem}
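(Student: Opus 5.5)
The plan is to homogenize, show that the norm forms coming from bases span the full space of degree-$(n+1)$ forms in $n+1$ variables, and then dehomogenize. The non-spanning parts of the statement are quick, so I will dispose of them first.

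\emph{Nonvanishing.} If $(\alpha_0,\ldots,\alpha_n)$ is a basis, then $\alpha_0+\alpha_1a_1+\cdots+\alpha_na_n\neq0$ for every $\mathbf a\in\F_q^n$, since the coefficient of $\alpha_0$ is $1$. The norm of a nonzero element is nonzero, so the polynomial is nonvanishing.

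\emph{Closure under $\F_q^\times$.} Given $\lambda\in\F_q^\times$, pick $u\in\F_{q^{n+1}}^\times$ with $\Nm(u)=\lambda$. Then $(u\alpha_0,\ldots,u\alpha_n)$ is again a basis, and its norm polynomial is $\lambda$ times the original one.

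\emph{Reduction to forms.} Introduce a new variable $x_0$. Dehomogenization $x_0\mapsto1$ maps $\cH_{n+1,n+1}$ (forms in $x_0,\ldots,x_n$) onto $\cP_{n,n+1}$. It sends $\Nm(\alpha_0x_0+\alpha_1x_1+\cdots+\alpha_nx_n)$ to our polynomial. So it suffices to show that the forms $\Nm(\beta_0x_0+\cdots+\beta_nx_n)$, with $\boldsymbol\beta=(\beta_0,\ldots,\beta_n)$ ranging over bases of $\F_{q^{n+1}}$, span $\cH_{n+1,n+1}$.

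\emph{Spanning via a Moore determinant.} I will refine the proof of Lemma~\ref{lem:norm-coefficient-separation} by inserting a Moore determinant. Suppose a linear functional kills all basis norm forms; write it as $(\lambda_{\mathbf e})_{|\mathbf e|=n+1}$ with $\lambda_{\mathbf e}\in\F_q$. Using the coefficient polynomials $c_{\mathbf e}$ of \eqref{eq:coefficient-polynomial} (with $s=d=n+1$), set
\[
F(\boldsymbol\beta)=\sum_{\mathbf e}\lambda_{\mathbf e}c_{\mathbf e}(\boldsymbol\beta).
\]
Then $F(\boldsymbol\beta)=0$ whenever $\boldsymbol\beta$ is a basis. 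Let
\[
\Delta(\boldsymbol\beta)=\det\bigl(\beta_i^{q^j}\bigr)_{0\le i,j\le n}
\]
be the Moore determinant. It vanishes on $\F_{q^{n+1}}^{n+1}$ exactly at the non-bases. It is a nonzero polynomial, since its diagonal monomial $\prod_i\beta_i^{q^i}$ appears with coefficient $1$ and no cancellation occurs. Hence $F\Delta$ vanishes on all of $\F_{q^{n+1}}^{n+1}$.

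\emph{Degree count (the key step).} This is where $q\geq3$ is used. In each variable, $\deg c_{\mathbf e}\le (q^{n+1}-1)/(q-1)<q^{n+1}/2$ and $\deg\Delta\le q^n\le q^{n+1}/3$. So $F\Delta$ has degree less than $q^{n+1}$ in every variable. Lemma~\ref{lem:grid} then gives $F\Delta=0$ as a polynomial. Since the polynomial ring is a domain and $\Delta\neq0$, we get $F=0$ as a polynomial.

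\emph{Conclusion.} As in Lemma~\ref{lem:norm-coefficient-separation}, distinct $c_{\mathbf e}$ share no monomials, so every $\lambda_{\mathbf e}=0$. Thus the basis norm forms span $\cH_{n+1,n+1}$, and dehomogenizing finishes the proof.

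The main point to check carefully is this degree bound. It fails for $q=2$, where $(2^{n+1}-1)+2^n\ge2^{n+1}$, which is presumably why the lemma assumes $q\ge3$.
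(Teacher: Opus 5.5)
Your proof is correct and shares the paper's skeleton: dualize, view the functional applied to the norm polynomial as a polynomial $F$ in the parameters that vanishes at every ordered basis, show $F$ is the zero polynomial, and pass between $\cP_{n,n+1}$ and $\cH_{n+1,n+1}$ by (de)homogenization. The genuine difference is the key step from vanishing at all bases to being the zero polynomial.

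The paper counts ordered bases. Using a Bernoulli-type estimate, it shows there are more than $Dq^{n(n+1)}$ of them when $q\geq 3$, where $D=(q^{n+1}-1)/(q-1)$. It then applies the Schwartz--Zippel lemma with respect to the total degree $D$.

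You instead multiply by the Moore determinant $\Delta$, which vanishes at every non-basis, so $F\Delta$ vanishes on the whole grid $\F_{q^{n+1}}^{n+1}$. The individual-degree bound $D+q^n<q^{n+1}$ lets you apply Lemma~\ref{lem:grid}; this bound is exactly where $q\geq3$ enters, and it fails at $q=2$, as you note. You then cancel $\Delta\neq 0$ in the integral domain. You only need the easy direction of the Moore criterion: $\F_q$-linear dependence forces $\Delta=0$ because Frobenius is $\F_q$-linear. Together with your no-cancellation argument that $\Delta\neq 0$ as a polynomial, this makes the step fully elementary.

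What each route buys: yours replaces the counting estimate and the external Schwartz--Zippel lemma by a one-line degree check, using only tools already in the paper (Lemma~\ref{lem:grid} and the Moore determinant from Section~\ref{sec:qge3-pairwise}). The paper's density argument works for any sufficiently large parameter set, without needing an explicit low-degree polynomial that cuts out the complement.

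Finally, you read off $\lambda_{\mathbf e}=0$ directly from the disjoint monomial supports of the $c_{\mathbf e}$. The paper instead first deduces that the functional kills all affine norm polynomials and then cites Lemma~\ref{lem:norm-coefficient-separation}. The two endings are equivalent.
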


\begin{proof}
Each polynomial $\Nm_{\F_{q^{n+1}}/\F_q}(\alpha_0+\alpha_1x_1+\cdots+\alpha_nx_n)$ is nonvanishing, because an $\F_{q}$-zero of such a polynomial implies that $\alpha_0, \ldots, \alpha_n$ are $\F_q$-linearly dependent. To prove spanning, suppose that an $\F_q$-linear functional $L$ on $\cP_{n,n+1}$ annihilates all these polynomials as $(\alpha_0,\ldots,\alpha_n)$ ranges over ordered
$\F_q$-bases of $\F_{q^{n+1}}$. Set
\[
  P(\alpha_0,\ldots,\alpha_n)
  =L\left(\Nm_{\F_{q^{n+1}}/\F_q}
     (\alpha_0+\alpha_1x_1+\cdots+\alpha_nx_n)\right).
\]
Write
\[
D=1+q+\cdots+q^n=\frac{q^{n+1}-1}{q-1}.
\]
View $P$ as a polynomial over $\F_{q^{n+1}}$; it has degree at most $D$ by the same reasoning as in \eqref{eq:coefficient-polynomial}. By hypothesis on $L$, the polynomial $P$ vanishes at every ordered $\F_q$-basis of
$\F_{q^{n+1}}$, and the number of such bases is
\[
\begin{aligned}
  \prod_{i=0}^{n}(q^{n+1}-q^i)
  &=q^{(n+1)^2}\prod_{j=1}^{n+1}(1-q^{-j})\geq
    q^{(n+1)^2}\left(1-\sum_{j=1}^{n+1}q^{-j}\right)\\
  &=q^{(n+1)^2}\frac{q-2+q^{-(n+1)}}{q-1}>q^{(n+1)^2}\frac{1-q^{-(n+1)}}{q-1}
   =Dq^{n(n+1)}.
\end{aligned}
\]
Here we use Bernoulli's inequality, and the strict inequality uses the assumption $q\geq3$. Thus the Schwartz--Zippel lemma \cite{S80} implies that $P$ is the zero polynomial.

It follows that $L$ annihilates every affine norm polynomial
\begin{equation}\label{eq:affine-norm-form}
\Nm_{\F_{q^{n+1}}/\F_q}
(\alpha_0+\alpha_1x_1+\cdots+\alpha_nx_n)
\end{equation}
for every choice of $\alpha_0,\ldots,\alpha_n\in\F_{q^{n+1}}$ (not just for ordered $\F_q$-bases of $\F_{q^{n+1}}$).

Homogenization gives a linear isomorphism
\[
  \cP_{n,n+1}\to\cH_{n+1,n+1},
  \qquad
  f(\mathbf x)\mapsto
  y_0^{n+1}f\!\left(\frac{y_1}{y_0},\ldots,
                     \frac{y_n}{y_0}\right).
\]
Under this isomorphism, the norm polynomial in \eqref{eq:affine-norm-form} corresponds to
\[
  \Nm_{\F_{q^{n+1}}/\F_q}  (\alpha_0y_0+\alpha_1y_1+\cdots+\alpha_ny_n).
\]
These homogeneous norm forms span $\cH_{n+1,n+1}$ by
Lemma~\ref{lem:norm-coefficient-separation}. As their
dehomogenizations span $\cP_{n,n+1}$, we obtain $L=0$. As $L$ is arbitrary, the claim about spanning is complete. 

Finally, if $\lambda\in\F_q^\times$, choose
$t\in\F_{q^{n+1}}^\times$ with
$\Nm_{\F_{q^{n+1}}/\F_q}(t)=\lambda$. Scaling every $\alpha_i$ by
$t$ preserves the basis condition and multiplies the norm polynomial
by $\lambda$.
\end{proof}

Now we are ready to present the proof of Theorem~\ref{thm:span}.

\begin{proof}[Proof of Theorem~\ref{thm:span}]
(1) Suppose $q=2$. Every nonvanishing polynomial takes the value $1$ at every point of $\F_2^n$, so $W_{n,d}$ consists only of polynomials representing constant functions. Conversely, let $g\in\cP_{n,d}$ represent a constant function on $\F_2^n$. If $g$ takes the value $1$, then $g\in W_{n,d}$ by definition. If $g$ takes the value $0$, then both $1$ and $1+g$ are nonvanishing, and therefore $g=(1+g)+1$ belongs to $W_{n,d}$. 

(2) Suppose that $q\geq 4$ is even and $2\leq d\leq n$. Put
\[
K=\{f\in\cP_{n,d}: [x_I]f = 0 \text{ for all subsets } I \text{ of size $d$}\}.
\]
If $f$ is nonvanishing and $[x_I]f\neq0$ for some $I$ of size $d$, then setting $x_j=0$ for all $j\notin I$ leaves a polynomial of degree at most $d$ in the $d$ variables indexed by $I$ with nonzero $x_I$-coefficient, and Lemma~\ref{lem:parity} produces a zero of it, hence a zero of $f$. So every nonvanishing polynomial lies in the subspace $K$, whence $W_{n,d}\subseteq K$. Conversely, let $f\in K$, so that every monomial occurring in $f$ involves at most $d-1$ variables. For each $S\in\binom{[n]}{d-1}$, the polynomials of degree at most $d$ in the variables indexed by $S$ form a copy of $\cP_{d-1,d}$, which is spanned by nonvanishing polynomials by Lemma~\ref{lem:affine-basis-norm-span}; these remain nonvanishing when regarded as polynomials in all $n$ variables. As every monomial of $f$ lies in $W_{n,d}$, we conclude that $K=W_{n,d}$.

(3) Suppose first that $q$ is odd. We start with $d=2$. Nonzero constants are nonvanishing, and so they belong to $W_{n,2}$. Fix a nonsquare
$\eta\in\mathbb F_q$. For each $i$ and $j$, the three polynomials $x_i^2-\eta$, $x_j^2-\eta$, and $(x_i+x_j)^2-\eta$ are nonvanishing; as a result, $x_i^2, x_j^2, (x_i+x_j)^2\in W_{n,2}$. Together with the identity $2x_ix_j = (x_i+x_j)^2-x_i^2-x_j^2$, we get $x_ix_j\in W_{n,2}$ (here we crucially used $2\neq 0$). Considering $(x_i+1)^2-\eta$, we can similarly show that $(x_i+1)^2\in W_{n,2}$, and so $x_i\in W_{n,2}$. We deduce that $W_{n,2}=\mathcal P_{n,2}$.

Now proceed by induction on $d$. In the even-characteristic case
$d>n$, start instead at $d=n+1$, where the assertion is Lemma~\ref{lem:affine-basis-norm-span}.
Thus, in either case, at the inductive step we may assume that
$\mathcal P_{n,d-1}\subseteq W_{n,d-1}\subseteq W_{n,d}$.
Let $H$ be a proper norm form of degree $d$. By Lemma~\ref{lem:nonvanishing-completion}, there is
$R\in\mathcal P_{n,d-1}\subseteq W_{n,d}$ such that $H+R$ is nonvanishing. Thus, $H+R\in W_{n,d}$, and since $R\in W_{n,d}$, we also have $H\in W_{n,d}$. By Proposition~\ref{prop:proper-norm-generation}, the proper norm forms span $\mathcal H_{n,d}$. We have shown that $\mathcal H_{n,d}\subseteq W_{n,d}$. Together with $\mathcal P_{n,d-1}\subseteq W_{n,d}$, this gives
\[
\cP_{n,d}
 =\cP_{n,d-1}+\cH_{n,d}
 \subseteq W_{n,d},
\]
and we conclude that $W_{n,d}=\cP_{n,d}$.
\end{proof}

\begin{rem}
Using Theorem~\ref{thm:span}, one can compute the exact codimension of $W_{n,d}$ in $\cP_{n, d}$. Clearly, $\operatorname{codim}  W_{n,d}=0$ for $q$ odd, or $q\ge 4$ even and $d>n$. Similarly, $\operatorname{codim} W_{n,d}=\binom{n}{d}$ for $q\geq 4$ even and $2\leq d\leq n$. Finally, for $q=2$,
\[
\operatorname{codim} W_{n,d} = \displaystyle \sum_{i=1}^{\min(d,n)} \binom{n}{i}.
\]
Indeed, over $\mathbb F_2$, every polynomial $f\in\cP_{n,d}$ has a unique squarefree reduction $\bar f$, and $f$ is nonvanishing if and only if $\bar f=1$. Hence the quotient space $\cP_{n,d}/W_{n,d}$ has a basis formed by nonconstant squarefree monomials $x_I$ with $1\le |I|\le \min(d,n)$. 
\end{rem}

\section{EKR over fields of order at least three}
\label{sec:qge3-pairwise}

In this section, we complete the proof of Theorem~\ref{thm:main}. 

In the introduction, we mentioned stars in $\cP_{n,d}$. It will be convenient to generalize the definition as follows. If $C$ is an affine translate of a subspace of $\cP_{n,d}$ containing the constant polynomials, we call
\[
  \{f\in C:f(\mathbf a)=b\}
\]
a \emph{star} in $C$, and call $\mathbf a$ its \emph{input center}
(\emph{in-center}, for short).

\subsection{Degree and variable lifting} \label{subsec:lifting}

We develop two ways to lift the star classification to $\cP_{n,d}$:
from $\cP_{n,d-1}$ by increasing the degree and from $\cP_{n-1,d}$ by increasing the number of variables. In both arguments, a maximum intersecting family is decomposed into slices along the cosets of a smaller space $U$. The star classification in $U$ makes each slice a star, while the spanning properties of nonvanishing polynomials force their in-centers to agree.

\begin{lem}
\label{lem:star-slice-synchronization}
Let $U\leq V$ be $\F_q$-linear subspaces of $\cP_{n,d}$. Suppose $U$ contains the constant polynomials, and $U$ can be identified with $\cP_{m,D}$ after choosing $m$ of the $n$ variables (where $m\geq 1$ and $D\geq 1$). Suppose that every maximum intersecting family in $U$ is a star. Assume that $V$ is the $\F_q$-span of $U$ and $\mathcal{Z}$ for some collection $\mathcal Z\subseteq V$ of nonvanishing polynomials. Then for every maximum intersecting family $\cA\subseteq V$, all the intersections of $\cA$ with the $U$-cosets in $V$ are stars with the same in-center.

Moreover, suppose that this common in-center is the origin. Put
$V_0=\{f\in V:f(0)=0\}$ and write
$\cA=\{f+\varphi(f):f\in V_0\}$ for some function $\varphi\colon V_0\to\F_q$.
Then
\[
\varphi(f+u)=\varphi(f),
\qquad \forall f\in V_0,\ u\in U\cap V_0.
\]
\end{lem}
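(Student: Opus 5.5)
The proof has two stages. First, every slice is a star, and the in-centers of slices whose cosets differ by an element of $\mathcal Z$ coincide. Then the spanning hypothesis propagates this equality to all cosets, using Lemma~\ref{lem:spanning-invariance}.

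\emph{Slices are stars.} Let $\cA\subseteq V$ be a maximum intersecting family, so $|\cA|=|V|/q$. By the last part of Lemma~\ref{lem:transversal}, applied with the subspace $U$, every coset $D=g+U$ satisfies $|\cA\cap D|=|U|/q$. Translating by $-g$ (Lemma~\ref{lem:normalization} with $h=g$ and no shift of variables) turns $\cA\cap D$ into a maximum intersecting family in $U\cong\cP_{m,D}$. By hypothesis this family is a star $\{u\in U:u(\mathbf a)=b\}$. Here $\mathbf a\in\F_q^m$ lives in the chosen $m$ coordinates, and the remaining coordinates are irrelevant since $U$ only involves those $m$ variables. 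So $\cA\cap D=\{f\in D:f(\mathbf a)=b'\}$, where $\mathbf a$ is read as a point whose other coordinates can be set to $0$, say. Define $\vartheta(D)=\mathbf a\in\F_q^m$. To make this well defined, I note that for $m\ge1$ a star in $\cP_{m,D}$ determines its center $(\mathbf a,b)$: distinct centers give distinct stars, because linear polynomials separate points. We then regard $\vartheta$ as a function on $V$ constant on $U$-cosets.

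\emph{Synchronization (main step).} Fix $z\in\mathcal Z$ and a coset $D$, and consider $D'=D+z$. Write $\cA\cap D=\{f\in D: f(\mathbf a)=b\}$ and $\cA\cap D'=\{f\in D':f(\mathbf a')=b'\}$. Choose $g\in D$ and identify both cosets with $U$ via $u\mapsto g+u$ and $u\mapsto g+z+u$. The elements are then $g+P$ with $P(\mathbf a)=b-g(\mathbf a)$, and $g+z+Q$ with $Q(\mathbf a')=b'-(g+z)(\mathbf a')$. Their differences are $z+(Q-P)$. Suppose $\mathbf a\neq\mathbf a'$. By Lemma~\ref{lem:difference-of-stars}, $Q-P$ ranges over all of $U$, so some difference equals $z+(-z\text{-part})$. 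More precisely, I want a difference equal to $z$ itself, which requires $Q-P=0\in U$; this is available. Then two members of $\cA$ differ by the nonvanishing polynomial $z$, a contradiction. Hence $\vartheta(D+z)=\vartheta(D)$. The same holds for $\lambda z$ with $\lambda\in\F_q^\times$, since nonvanishing is scale invariant. Also $\vartheta(D+u)=\vartheta(D)$ for $u\in U$ trivially. Thus $\vartheta$ is invariant under the set $\mathcal D=U\cup\F_q^\times\mathcal Z$, which spans $V$ and is closed under nonzero scalars. Lemma~\ref{lem:spanning-invariance} gives that $\vartheta$ is constant. The subtle point is checking that the difference-set lemma applies with base polynomial $g$ subtracted consistently, and that in-centers are unique.

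\emph{Second claim.} Suppose the common in-center is $0$. Let $f\in V_0$ and $u\in U\cap V_0$. Then $f+\varphi(f)$ and $f+u+\varphi(f+u)$ lie in the same coset $f+U$. Since $\cA\cap(f+U)$ is a star centered at $0$, both have the same value at $0$, namely $\varphi(f)=\varphi(f+u)$, because $f(0)=u(0)=0$.
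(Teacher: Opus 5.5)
Your proof is correct and follows essentially the same route as the paper. Slices are stars via Lemma~\ref{lem:transversal}. Two slices whose cosets differ by a nonvanishing $z$ must share an in-center, since otherwise Lemma~\ref{lem:difference-of-stars} makes the two stars share an element, giving two members of $\cA$ that differ by $z$. Lemma~\ref{lem:spanning-invariance} then propagates this to all cosets. The only cosmetic difference is that you apply Lemma~\ref{lem:spanning-invariance} on $V$ with $\cD=U\cup\F_q^\times\mathcal Z$, whereas the paper works on $V/U$ with the images of $\mathcal Z$; the two are equivalent.
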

\begin{proof}
By Lemma~\ref{lem:transversal}, $\cA$ meets every $U$-coset in a maximum intersecting family. This family must be a star, because subtracting a coset representative allows us to move the family into $U$ where we can apply the hypothesis. A star in a coset of $U$ determines its in-center uniquely as a point of $\F_q^m$; we can view this point in $\F_q^n$ by setting the remaining coordinates equal to $0$.

For a coset $C$ of $U$, let $\boldsymbol\sigma(C)$ denote the in-center of
$\mathcal{A}\cap C$. Fix $H\in\mathcal Z$ and write $C=g+U$ where $g\in V$.
Suppose, to the contrary, that $\boldsymbol\sigma(C)\neq \boldsymbol\sigma(C+H)$.
Write 
\begin{align*}
\cA\cap C & =g+\{u\in U:u(\boldsymbol\sigma(C))=c_1\},  \\
\cA\cap(C+H)&=g+H+\{u\in U:u(\boldsymbol\sigma(C+H))=c_2\}
\end{align*}
with $c_1,c_2\in\F_q$. As the two in-centers differ,
Lemma~\ref{lem:difference-of-stars}, applied inside $U\cong\cP_{m,D}$, shows
that the difference set of the stars $\{u\in U:u(\boldsymbol\sigma(C))=c_1\}$ and $\{u\in U:u(\boldsymbol\sigma(C+H))=c_2\}$ is all of $U$. In particular, the element $0\in U$ is in the difference set; we deduce that the two stars share an element $u$. Then $P\colonequals g+u$, $Q \colonequals g+H+u$ satisfy $P\in\cA\cap C$, $Q\in\cA\cap(C+H)$, and $Q-P=H$.
As $H$ is nonvanishing, this contradicts the intersecting property of $\cA$. Thus, $\boldsymbol\sigma(C+H)=\boldsymbol\sigma(C)$.

Since $\lambda H$ is nonvanishing for every $\lambda\in\F_q^\times$,
the same argument gives
$\boldsymbol\sigma(C+\lambda H)=\boldsymbol\sigma(C)$. As the images of the elements of $\mathcal Z$ span $V/U$, it follows from Lemma~\ref{lem:spanning-invariance} that $\boldsymbol\sigma$ is a constant.

For the final assertion, let $f\in V_0$ and $u\in U\cap V_0$.
Since $U$ contains the constants, $f+\varphi(f)$ and $(f+u)+\varphi(f+u)$ are in the same $U$-coset; they take the same value at the in-center $0$. Using $u(0)=0$, this leads to $\varphi(f+u)=\varphi(f)$.
\end{proof}

\begin{prop}[Degree lifting]\label{prop:degree-lifting}
Let $q$ be any prime power, let $n\geq1$ and $d\geq2$, and assume that
either $q$ is odd or $d>n$. If every maximum intersecting family in
$\cP_{n,d-1}$ is a star, then every maximum intersecting family in
$\cP_{n,d}$ is a star.
\end{prop}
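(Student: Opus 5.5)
We apply Lemma~\ref{lem:star-slice-synchronization} with $U=\cP_{n,d-1}$ (so $m=n$, $D=d-1$) and $V=\cP_{n,d}$. By hypothesis every maximum intersecting family in $U$ is a star. For $\mathcal Z$ we take the nonvanishing polynomials $H+R$ produced by Lemma~\ref{lem:nonvanishing-completion}, as $H$ ranges over proper norm forms of degree $d$ and $R\in\cP_{n,d-1}$. Their images in $V/U\cong\cH_{n,d}$ are the proper norm forms $H$. These span $\cH_{n,d}$ by Proposition~\ref{prop:proper-norm-generation}, precisely because $q$ is odd or $d>n$. Hence $V=\Span(U\cup\mathcal Z)$. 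The lemma then shows that every maximum intersecting family $\cA\subseteq\cP_{n,d}$ meets each $U$-coset in a star, and all these stars share one in-center. After translating variables (Lemma~\ref{lem:normalization}) we may assume this in-center is $0$. Writing $\cA=\{f+\varphi(f):f\in V_0\}$, the lemma gives $\varphi(f+u)=\varphi(f)$ for all $u\in U\cap V_0$. So $\varphi$ factors through $V_0/(U\cap V_0)\cong\cH_{n,d}$, say $\varphi(f)=\psi(f_d)$, where $f_d$ is the degree-$d$ homogeneous part of $f$.

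It remains to show that $\psi$ is constant on $\cH_{n,d}$. Then $\cA=\{f: f(0)=c\}$ with $c=\psi(0)$, which is a star. By Lemma~\ref{lem:spanning-invariance} applied to $\cH_{n,d}$ with $\cD$ the proper norm forms, it suffices to prove $\psi(h+H)=\psi(h)$ for every $h\in\cH_{n,d}$ and every proper norm form $H$. Fix $h$ and $H$. Let $\delta\neq -H(\mathbf b)$ be the value supplied by Lemma~\ref{lem:nonvanishing-completion}, with $\mathbf b=0$ and $H(0)=0$, so any $\delta\neq0$ is allowed. The lemma gives $R\in\cP_{n,d-1}$ with $H+R$ nonvanishing and $R(0)=\delta$. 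Consider the two members
\[
P=h+\psi(h),\qquad Q=h+H+(R-\delta)+\psi(h+H)
\]
of $\cA$; note that $R-\delta\in U\cap V_0$. Their difference is
\[
Q-P=(H+R)+\bigl(\psi(h+H)-\psi(h)-\delta\bigr).
\]
This difference must have a zero, since $\cA$ is intersecting. If $\psi(h+H)-\psi(h)=\epsilon\neq0$, choose $\delta=\epsilon$. Then $Q-P=H+R$, which is nonvanishing, a contradiction. Hence $\psi(h+H)=\psi(h)$.

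The one point needing care is the choice $\delta=\epsilon$. It is permitted because the only restriction in Lemma~\ref{lem:nonvanishing-completion} is $\delta\neq -H(0)=0$, and $\epsilon\neq 0$. No other obstacle arises. Closure of proper norm forms under $\F_q^\times$, which Lemma~\ref{lem:spanning-invariance} requires, is part of Proposition~\ref{prop:proper-norm-generation}.
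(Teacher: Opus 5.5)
Your proposal is correct and follows essentially the same route as the paper. Both arguments apply Lemma~\ref{lem:star-slice-synchronization} with $U=\cP_{n,d-1}$, use Lemma~\ref{lem:nonvanishing-completion} at the common in-center to force invariance under proper norm forms, and conclude via Proposition~\ref{prop:proper-norm-generation} and Lemma~\ref{lem:spanning-invariance}. The only difference is cosmetic: you translate the in-center to the origin and build the two offending members directly from the selector $\varphi$, whereas the paper keeps a general in-center $\mathbf s$, tracks the slice values $b_A$, and invokes Lemma~\ref{lem:difference-of-stars}. Your $\psi(A)$ is exactly the paper's $\xi(A)=b_A+A(\mathbf s)$ once $\mathbf s=0$.
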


\begin{proof}
Let $\cF\subseteq\cP_{n,d}$ be a maximum intersecting family, and let
$\mathcal Z\subseteq\cP_{n,d}$ be the following subset:
\[
  \mathcal Z
  =\{H+R:H \text{ is a proper norm form in } \cH_{n,d},\ R\in\cP_{n,d-1},
       \ H+R\text{ is nonvanishing}\}.
\]
By combining Lemma~\ref{lem:nonvanishing-completion} and Proposition~\ref{prop:proper-norm-generation}, we have
\[
  \cP_{n,d}
  =\cP_{n,d-1}+\Span_{\F_q}\mathcal Z.
\]
For $A\in\cH_{n,d}$, write $\cF_A=\{P\in\cP_{n,d-1}:A+P\in\cF\}$. Lemma~\ref{lem:star-slice-synchronization}, applied with $U=\cP_{n,d-1}$, shows that there exists a common in-center $\mathbf s\in\F_q^n$ and a value $b_A\in\F_q$ for each $A$ satisfying
\[
\cF_A=\{P\in\cP_{n,d-1}:P(\mathbf s)=b_A\}.
\]

Fix $A\in\cH_{n,d}$ and a proper norm form $H\in\cH_{n,d}$.
The intersecting property says that $H+Q-P$ has a zero whenever
$P\in\cF_A$ and $Q\in\cF_{A+H}$. By Lemma~\ref{lem:difference-of-stars},
the differences $Q-P$ run through all $R\in\cP_{n,d-1}$ with $R(\mathbf s)=b_{A+H}-b_A$. If $b_{A+H}-b_A\neq-H(\mathbf s)$, then
Lemma~\ref{lem:nonvanishing-completion}, applied at $\mathbf s$ with
$\delta=b_{A+H}-b_A$, would produce such an $R$ for which $H+R$ is
nonvanishing, a contradiction. Hence $b_{A+H}-b_A=-H(\mathbf s)$. 

Define $\xi(A)=b_A+A(\mathbf s)$ for each $A\in\cH_{n,d}$. Then $\xi(A+H)=\xi(A)$ for every proper norm form $H$. By Proposition~\ref{prop:proper-norm-generation}, the proper norm forms
span $\cH_{n,d}$ and are closed under multiplication by nonzero
scalars. Lemma~\ref{lem:spanning-invariance} applied to $\xi(A+H)=\xi(A)$ implies that $\xi$ is constant. Every member of $\cF$ can be expressed as $A+P$ for some $P\in\cF_{A}$; we compute
\[
  (A+P)(\mathbf s)=A(\mathbf s)+b_A=\xi(A)=\xi(0).
\]
This shows $\cF\subseteq \{f\in\cP_{n,d}:f(\mathbf s)=\xi(0)\}$. By comparing cardinalities, we conclude that $\cF=\{f\in\cP_{n,d}:f(\mathbf s)=\xi(0)\}$ is a star.
\end{proof}

\begin{prop}[Variable lifting]
\label{prop:variable-lifting}
Let $n\geq2$ and $d\geq1$. Suppose that every maximum intersecting
family in $\cP_{n-1,d}$ is a star and that $\cP_{n,d}$ is spanned by
nonvanishing polynomials. Let $\cF\subseteq\cP_{n,d}$ be a maximum intersecting family. After
the normalization in Corollary~\ref{cor:standard-normalization}, write
\[
  \cF=\{f+\varphi(f):f\in V_0\},
  \qquad
  V_0=\{f\in\cP_{n,d}:f(0)=0\}.
\]
Then $\varphi(f)$ depends only on the coefficients of the monomials
in $f$ involving all $n$ variables. 
In particular, if $d<n$, then $\cF$ is a star.
\end{prop}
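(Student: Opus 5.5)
We apply Lemma~\ref{lem:star-slice-synchronization} once for each variable. Fix $i\in[n]$ and let $U_i\leq\cP_{n,d}$ be the polynomials of degree at most $d$ in the $n-1$ variables other than $x_i$. Then $U_i$ contains the constants and is identified with $\cP_{n-1,d}$, so by hypothesis every maximum intersecting family in $U_i$ is a star. Take $V=\cP_{n,d}$ and let $\mathcal Z$ be the set of all nonvanishing polynomials, which spans $V$ by assumption. The lemma then says that the slices of $\cF$ along the $U_i$-cosets are stars with a common in-center $\mathbf s^{(i)}$. This point has $i$th coordinate $0$, since we embed $\F_q^{n-1}$ into $\F_q^n$ by zero-padding.

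The next step is to show $\mathbf s^{(i)}=0$ using the normalization $0,x_1,\ldots,x_n\in\cF$. The coset $U_i$ itself contains $0$ and every $x_j$ with $j\neq i$. Its slice $\cF\cap U_i$ is a star $\{u\in U_i:u(\mathbf s^{(i)})=b\}$. From $0\in\cF\cap U_i$ we get $b=0$. From $x_j\in\cF\cap U_i$ we get $s^{(i)}_j=0$ for all $j\neq i$. Together with $s^{(i)}_i=0$, this gives $\mathbf s^{(i)}=0$. The "moreover" part of Lemma~\ref{lem:star-slice-synchronization} now applies and yields
\[
\varphi(f+u)=\varphi(f)\qquad\text{for all } f\in V_0,\ u\in U_i\cap V_0,\ 1\leq i\leq n.
\]

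Now take $f,g\in V_0$ whose coefficients agree on every monomial involving all $n$ variables. Then $f-g$ is a sum of monomials, each missing at least one variable and each with zero constant term. Assign each such monomial to a variable it misses, and write $f-g=u_1+\cdots+u_n$ with $u_i\in U_i\cap V_0$. Applying the invariance above $n$ times in succession gives $\varphi(f)=\varphi(g)$. This proves the main claim.

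If $d<n$, no monomial of degree at most $d$ involves all $n$ variables. Hence $\varphi$ is constant on $V_0$, and it equals $\varphi(0)=0$. So $\cF=V_0$, the star centered at $(0,0)$; undoing the normalization of Lemma~\ref{lem:normalization} shows the original family is a star. The only delicate point is the in-center bookkeeping: we must check that the common in-center really is the origin for each $U_i$, and this is exactly what the normalization $0,x_j\in\cF$ provides. The remaining steps are direct applications of the earlier lemmas.
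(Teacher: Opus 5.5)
Your proposal is correct and follows essentially the same route as the paper: for each $i$ you apply Lemma~\ref{lem:star-slice-synchronization} with $V=\cP_{n,d}$ and $U=U_i$, use $0,x_j\in\cF\cap U_i$ to force the common in-center to be the origin, and then remove, one at a time, the monomials that omit some variable. Your explicit decomposition $f-g=u_1+\cdots+u_n$ with $u_i\in U_i\cap V_0$ spells out a step that the paper states in a single line.
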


\begin{proof}
For $i\in[n]$, let $U_i$ be the space of polynomials that do not
involve $x_i$. Then $U_i\cong\cP_{n-1,d}$, so every maximum
intersecting family in $U_i$ is a star. Since $\cP_{n,d}$ is spanned
by nonvanishing polynomials,
Lemma~\ref{lem:star-slice-synchronization} applies with $V=\cP_{n,d}$ and $U=U_i$. The intersection $\cF\cap U_i$ contains $0$ and every $x_j$ with
$j\neq i$, so its in-center is the origin. Hence
\[
  \varphi(f+u)=\varphi(f), \qquad \forall f\in V_0,\ u\in U_i\cap V_0.
\]

Thus, changing the coefficient of any nonconstant monomial that omits
at least one variable does not change $\varphi$. Therefore
$\varphi(f)$ depends only on the coefficients of the monomials
involving all $n$ variables. Finally, if $d<n$, there are no such nonconstant monomials of degree at most $d$ and thus $\cF=\{f\in\cP_{n,d}:f(0)=0\}$.
\end{proof}

\subsection{Odd characteristic}\label{subsec:odd}
The following theorem, due to Adriaensen~\cite{A22}, settles the quadratic case in one variable; for sufficiently large $q$, it
also follows from the stability theorem of Aguglia, Csajb\'{o}k, and Weiner~\cite{ACW24}. 

\begin{thm}[\cite{A22}]\label{thm:univariate}
Let $q\geq3$. Every maximum intersecting family in $\cP_{1,2}$ is a star. Equivalently, suppose that
\[
\cF=\{sz^2+tz+\eta(s,t):s,t\in \F_q\}
\]
is intersecting. Then there exist $r,c\in \F_q$ such that
\[
\eta(s,t)=c-sr^2-tr
\]
for all $s,t\in \F_q$; in particular, $\cF$ is the star with center $(r,c)$.
\end{thm}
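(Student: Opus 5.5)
The plan is to prove the theorem spectrally, via the Hoffman (ratio) bound on the Cayley graph $G=\operatorname{Cay}(\cP_{1,2},S)$ from the introduction, where $S$ is the set of nonvanishing polynomials in $\cP_{1,2}$. The norm-form machinery of Section~\ref{sec:span} is not useful here, because $\cP_{1,1}$ has many non-star maximum families (Proposition~\ref{prop:degree-one}). Identify $\cP_{1,2}$ with $\F_q^3$ via $sz^2+tz+c\mapsto(s,t,c)$. The characters are $\chi_{\ell}(f)=\psi(\ell(f))$, where $\psi$ is a fixed nontrivial additive character and $\ell$ runs over the linear functionals on $\cP_{1,2}$. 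The eigenvalue attached to $\ell$ is $\lambda_\ell=\sum_{f\in S}\psi(\ell(f))$.

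\textbf{Degree and the evaluation eigenvalue.} The set $S$ consists of the $q-1$ nonzero constants together with the polynomials $sz^2+tz+c$ with $s\neq0$ that have no root. There are $q(q-1)/2$ such rootless polynomials for each $s\neq0$, so
\[
k=|S|=(q-1)\bigl(q(q-1)/2+1\bigr).
\]
Now take $\ell=b\operatorname{ev}_a$ with $b\neq0$. Since $S$ is closed under $\F_q^\times$-scaling, the value $f(a)$ is equidistributed on $\F_q^\times$ as $f$ runs over $S$. Hence $\lambda_\ell=-k/(q-1)$.

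\textbf{Equality in the Hoffman bound.} With $\lambda_{\min}=-k/(q-1)$, the Hoffman bound reads
\[
\alpha\le \frac{q^3(-\lambda_{\min})}{k-\lambda_{\min}}=\frac{q^3}{q}=q^2,
\]
which is tight. So the next step is to show $\lambda_\ell>-k/(q-1)$ for every other nontrivial $\ell$, that is, for every $\ell$ not proportional to an evaluation. Every functional is a nonzero multiple of either some $\operatorname{ev}_a$ or of $\ell=\mu_2[z^2]+\mu_1[z]+\mu_0[1]$ with $\mu_2\neq\mu_1^2/(4\mu_0)$ in the appropriate sense. These include the functional "at infinity" $[z^2]$, for which a direct count gives
\[
\lambda=-q(q-1)/2+(q-1)>\lambda_{\min}.
\]
For the remaining $\ell$, I will write the indicator of ``no root'' using quadratic characters in odd characteristic, and the trace of $c s/t^2$ in even characteristic, as in the $q$ even analogue. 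The eigenvalue then becomes a combination of Gauss and Kloosterman-type sums, bounded by $O(q^{3/2})$ in absolute value, whereas $|\lambda_{\min}|\sim q^2/2$. Small $q$ (such as $q=3,4,5$) must be checked by hand or by exact evaluation. This eigenvalue computation, in particular a clean exact formula rather than an asymptotic bound so that all $q\geq3$ are covered, is where I expect the main obstacle to lie.

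\textbf{From the eigenspace to a star.} Given the strict separation, equality in Hoffman's bound forces $1_\cF-q^{-1}\mathbf 1$ into the $\lambda_{\min}$-eigenspace. That eigenspace is spanned by the characters $\psi(b f(a))$ with $a\in\F_q$ and $b\neq0$. Hence
\[
1_\cF(f)=q^{-1}+\sum_{a\in\F_q}g_a(f(a))
\]
for functions $g_a\colon\F_q\to\mathbb C$ of mean zero. The final step is to show that only one $g_a$ is nonzero. I would restrict to the $q^2$ cosets of a $1$-dimensional space chosen so that the evaluations at two fixed points $a\neq a'$ vary independently: for instance, use Lemma~\ref{lem:difference-of-stars} to move along $(z-a)(z-a'')$-directions, which fix $f(a)$ and $f(a'')$ while changing $f(a')$. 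A $0/1$-valued function of the form $\text{const}+g_{a}(x)+g_{a'}(y)+\cdots$ on such a grid must have all but one summand constant. Once $1_\cF$ depends only on $f(a)$ for a single $a$, the condition $|\cF|=q^2$ together with the transversal property of Lemma~\ref{lem:transversal} forces $\cF=\{f:f(a)=c\}$. This is exactly the stated parametrization $\eta(s,t)=c-sr^2-tr$ with $r=a$.
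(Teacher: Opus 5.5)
The paper does not prove this statement; it quotes it from Adriaensen~\cite{A22}. Your argument therefore has to carry the whole proof, and it breaks at the last step.

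After the Hoffman step you know $1_\cF=q^{-1}+\sum_a g_a(f(a))$. Writing $n_a(v)=|\{g\in\cF:g(a)=v\}|$, one computes $g_a(v)=(n_a(v)-q)/q^2$. So what you have is exactly the identity $\sum_{g\in\cF}|\{x\in\F_q:f(x)=g(x)\}|=q^2-q+q^2\,1_\cF(f)$ for every $f\in\cP_{1,2}$. Getting from this to ``only one $g_a$ is nonzero'' is the real content of the theorem, and your grid argument does not do it:
\begin{enumerate}[(a)]
\item A coset of a one-dimensional subspace is a line, so only one parameter varies on it.
\item Moving along $h=(z-a)(z-a'')$ freezes $f(a)$ and $f(a'')$, but it also changes $f(x)$ at every $x\notin\{a,a''\}$, since $h(x)\neq0$. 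On such a line, or on any plane, the restriction of $\sum_x g_x(f(x))$ keeps about $q-2$ moving summands. It is never of the separated form $u(x)+v(y)$, which is what the lemma ``a $\{0,1\}$-valued $u(x)+v(y)$ on $\F_q^2$ has $u$ or $v$ constant'' needs.
\item For $q\geq4$ no such separated grid exists in $\cP_{1,2}$. It would require a two-dimensional space of polynomials of degree at most $2$ vanishing at $q-2\geq2$ given points, and such spaces have dimension at most $1$.
\end{enumerate}
You also never use the intersecting property after Hoffman's bound. Nothing in your sketch explains why a $\{0,1\}$-valued function in the span of the evaluation characters with $q^2$ ones must be a star. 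That classification needs a genuinely new argument, and it is missing.

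The step you flag as the main obstacle is actually routine, but the estimate you propose for it is wrong. For $q$ even and $\ell=[z]$, one has $\lambda_\ell=(q-1)-q(q-1)/2$, which is of order $q^2$, not $O(q^{3/2})$. The reason is that in characteristic $2$ every rootless $sz^2+tz+c$ has $t\neq0$, and for fixed $s,t\neq0$ exactly $q/2$ values of $c$ work.

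Exact values come from elementary counting:
\begin{enumerate}[(a)]
\item Homogenize, and use $\sum_F\psi(\ell(F))=0$ over all binary quadratic forms.
\item Subtract the contributions of $0$, of the forms $cL^2$, and of the split forms $cL_1L_2$.
\item The last two contributions are governed by the symmetric bilinear form $(L_1,L_2)\mapsto\ell(L_1L_2)$. It is nondegenerate exactly when $\mu_0\mu_2\neq\mu_1^2$; that is the correct description of the non-evaluations, not $\mu_2\neq\mu_1^2/(4\mu_0)$.
\end{enumerate}
The result is that every nontrivial $\ell$ not proportional to some $\operatorname{ev}_a$ with $a\in\F_q$ satisfies $\lambda_\ell\geq(q-1)-q(q-1)/2=\lambda_{\min}+q$. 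This includes $[z^2]$, and also $[z]$ when $q$ is even. So the separation holds for all $q\geq3$, with no small cases to check by hand. The spectral half of your plan is sound; the missing piece is the passage from the eigenspace to a star.
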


We now extend Theorem~\ref{thm:univariate} to multivariate quadratics over fields of odd order. The key is to use induction together with Proposition~\ref{prop:variable-lifting} to reduce the number of variables.

\begin{prop}\label{prop:quadratic}
Let $q$ be odd and $n\geq1$. Every maximum intersecting family in
$\cP_{n,2}$ is a star.
\end{prop}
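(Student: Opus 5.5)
We argue by induction on $n$, using Proposition~\ref{prop:variable-lifting} at each step. For $n=1$ the claim is Theorem~\ref{thm:univariate}. Suppose $n\geq2$ and that every maximum intersecting family in $\cP_{n-1,2}$ is a star. Since $q$ is odd, Theorem~\ref{thm:span}(3) gives $W_{n,2}=\cP_{n,2}$, so the hypotheses of Proposition~\ref{prop:variable-lifting} hold with $d=2$.

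If $n\geq3$, then $d=2<n$, and Proposition~\ref{prop:variable-lifting} immediately shows that $\cF$ is a star. So the only real work is the case $n=2$.

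For $n=2$, normalize as in Corollary~\ref{cor:standard-normalization}, so that $0,x_1,x_2\in\cF$ and $\cF=\{f+\varphi(f):f\in V_0\}$. The only nonconstant monomial of degree at most $2$ involving both variables is $x_1x_2$. Proposition~\ref{prop:variable-lifting} therefore gives
\[
\varphi(f)=\psi\bigl([x_1x_2]f\bigr)
\]
for some $\psi\colon\F_q\to\F_q$, and $\psi(0)=\varphi(0)=0$. It remains to show that $\psi$ is constant. Then $\varphi\equiv0$, so $\cF$ is the star $\{f:f(0)=0\}$.

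Suppose instead that $\psi(c)\neq\psi(c')$, and put $c''=c-c'$ and $e=\psi(c)-\psi(c')\neq0$. For any $f,g\in V_0$ with $[x_1x_2]f=c$ and $[x_1x_2]g=c'$, the difference of the corresponding members of $\cF$ is $(f-g)+e$. As $f$ and $g$ range over all such pairs, $f-g$ runs over every $h\in V_0$ with $[x_1x_2]h=c''$ (take $g=c'x_1x_2$). Hence it suffices to construct one such $h$ for which $h+e$ is nonvanishing; this contradicts the intersecting property of $\cF$.

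This construction is the main obstacle, and the plan is to use Lemma~\ref{lem:nonvanishing-completion}. Take $\alpha\in\F_{q^2}^\times$ and $\lambda\in\F_q$, and let
\[
H=\Nm(\alpha x_1+\lambda\alpha x_2)=\Nm(\alpha)(x_1+\lambda x_2)^2 .
\]
This is a proper norm form, since its parameters span the one-dimensional space $\F_q\alpha$. Its $x_1x_2$-coefficient is $2\lambda\Nm(\alpha)$. Because $q$ is odd and $\Nm$ is onto $\F_q^\times$, we can make this coefficient equal to any prescribed $c''$: if $c''=0$ take $\lambda=0$, and otherwise take $\Nm(\alpha)=1$ and $\lambda=c''/2$.

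Now apply Lemma~\ref{lem:nonvanishing-completion} with $\mathbf b=0$ and $\delta=e$. This is allowed because $-H(0)=0\neq e$. The lemma gives $R\in\cP_{2,1}$ with $R(0)=e$ and $H+R$ nonvanishing.

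Set $h=H+R-e$. Then $h(0)=0$, and $[x_1x_2]h=c''$ because $R$ is affine. Moreover $h+e=H+R$ is nonvanishing, which is the required contradiction.
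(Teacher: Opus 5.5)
Your proof is correct and follows the paper's argument essentially step for step: induction via Proposition~\ref{prop:variable-lifting} together with Theorem~\ref{thm:span}(3), reduction for $n=2$ to a function of the $x_1x_2$-coefficient, and a contradiction obtained by completing a proper norm form $(x_1+\lambda x_2)^2$ (up to a norm scalar) to a nonvanishing polynomial via Lemma~\ref{lem:nonvanishing-completion} at the origin, with $\delta$ equal to the jump in $\psi$. The only difference is cosmetic: the paper phrases the last step as the translation invariance $\Phi(z+a)=\Phi(z)$ and uses $H=(x_1+\frac a2x_2)^2$ directly.
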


\begin{proof}
We argue by induction on $n$. The case $n=1$ is
Theorem~\ref{thm:univariate}. Suppose that $n\geq2$, and let
$\cF\subseteq\cP_{n,2}$ be a maximum intersecting family.
By induction and Theorem~\ref{thm:span} (3),
Proposition~\ref{prop:variable-lifting} applies. If $n\geq 3$, then
Proposition~\ref{prop:variable-lifting} already implies that $\cF$ is a star.

It remains to consider $n=2$. By
Corollary~\ref{cor:standard-normalization}, we may write
\[
\cF=\{f+\varphi(f):f\in V_0\},
\qquad
V_0=\{f\in\cP_{2,2}:f(0)=0\},
\]
where $\varphi(0)=\varphi(x_1)=\varphi(x_2)=0$.
By Proposition~\ref{prop:variable-lifting}, $\varphi(f)$ depends only on the coefficient of $x_1x_2$. Thus, there is a function
$\Phi\colon\F_q\to\F_q$, with $\Phi(0)=0$, such that $\varphi(f)=\Phi([x_1x_2]f)$. 

Fix $a,z\in\F_q$. We claim that $\Phi(z+a)=\Phi(z)$. Otherwise, $\delta=\Phi(z+a)-\Phi(z)\in\F_q^\times$. The quadratic form $H=\left(x_1+\frac a2x_2\right)^2=\Nm_{\F_{q^2}/\F_q}\left(x_1+\frac a2x_2\right)$ is a proper norm form and satisfies $[x_1x_2]H=a$. Lemma~\ref{lem:nonvanishing-completion}, applied at the origin,
gives $R\in\cP_{2,1}$ such that $N=H+R$ is nonvanishing and $N(0)=R(0)=\delta$. Choose $f\in V_0$ with $[x_1x_2]f=z$. Then the two members
\[
\begin{aligned}
f+\varphi(f)
  &=f+\Phi(z),\\
f+N-\delta+\varphi(f+N-\delta)
  &=f+N-\delta+\Phi(z+a)
\end{aligned}
\]
of $\cF$ would differ by the nonvanishing polynomial $N$, a contradiction.

Therefore, $\Phi(z+a)=\Phi(z)$. Since $a$ and $z$ were arbitrary, $\Phi$ is constant. As
$\Phi(0)=0$, we have $\Phi=0$. It follows that $\cF=\{f\in\cP_{2,2}:f(0)=0\}$ is a star.
\end{proof}
\begin{thm}
\label{thm:odd-pairwise}
Let $q$ be odd, $n\geq1$, and $d\geq2$. Then every maximum intersecting family in $\cP_{n,d}$ is a star.
\end{thm}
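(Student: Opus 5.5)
The plan is an induction on $d$, with Proposition~\ref{prop:quadratic} as the base case and Proposition~\ref{prop:degree-lifting} as the inductive step.

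Fix $n\geq1$ and $q$ odd.

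\emph{Base case $d=2$.} Proposition~\ref{prop:quadratic} states exactly that every maximum intersecting family in $\cP_{n,2}$ is a star.

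\emph{Inductive step.} Let $d\geq3$ and assume every maximum intersecting family in $\cP_{n,d-1}$ is a star. Since $q$ is odd, the hypothesis ``$q$ is odd or $d>n$'' of Proposition~\ref{prop:degree-lifting} holds. That proposition then upgrades the star classification from $\cP_{n,d-1}$ to $\cP_{n,d}$. Its ingredients are already in place for odd $q$:
\begin{enumerate}[(i)]
\item Proposition~\ref{prop:proper-norm-generation} shows that the proper norm forms span $\cH_{n,d}$ whenever $q$ is odd, including when $d\leq n$.
\item Lemma~\ref{lem:nonvanishing-completion} completes each proper norm form to a nonvanishing polynomial, with a prescribed value of the lower-degree part at a chosen point.
\item Lemma~\ref{lem:star-slice-synchronization} uses these to synchronize the in-centers of the slices.
\end{enumerate}
Induction on $d$ therefore gives the statement for all $d\geq2$.

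There is no real obstacle left at this point; the difficulty sits in the two cited results. The $n=2$ quadratic case inside Proposition~\ref{prop:quadratic} needs Adriaensen's univariate theorem together with the norm-form completion. The spanning of $\cH_{n,d}$ by proper norm forms when $d\leq n$ uses $2\neq0$ to obtain the squarefree monomials $x_I$ with $|I|=d$. The only thing to verify is that each hypothesis of Proposition~\ref{prop:degree-lifting} is met at every step, which requires only $q$ odd and $d\geq2$.
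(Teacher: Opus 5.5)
Your proposal is correct and matches the paper's proof: the paper also takes Proposition~\ref{prop:quadratic} as the base case $d=2$ and applies Proposition~\ref{prop:degree-lifting} successively, which is valid because $q$ is odd. Your remarks on the ingredients of the degree-lifting step are accurate but not needed, since that proposition is already established.
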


\begin{proof}
The base case $d=2$ is Proposition~\ref{prop:quadratic}. Since $q$ is
odd, Proposition~\ref{prop:degree-lifting} applies successively at
every larger degree.
\end{proof}

\subsection{Even characteristic: \texorpdfstring{$d>n$}{d > n}}
\label{subsec:even-supercritical}

Assume that $q\geq 4$ is even. By
Proposition~\ref{prop:degree-lifting}, it suffices to prove the
base case $d=n+1$. We use induction on $n$ by considering polynomials that omit one variable. The key ingredients are norm forms and Moore determinants.

\begin{prop}[Base case for even $q$]\label{prop:critical-even}
Let $q\geq4$ be even and let $n\geq1$. Every maximum intersecting
family in $\cP_{n,n+1}$ is a star.
\end{prop}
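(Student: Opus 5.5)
We argue by induction on $n$, along the lines of the odd-characteristic argument but with norm forms in $\F_{q^{n+1}}$ taking the place of squares.

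\textbf{Base case and setup.} For $n=1$ the space is $\cP_{1,2}$, and Theorem~\ref{thm:univariate} (valid for all $q\geq3$) gives the claim. Now let $n\geq2$. By the induction hypothesis, every maximum intersecting family in $\cP_{n-1,n}$ is a star. Since $n+1>n-1$, applying Proposition~\ref{prop:degree-lifting} once (for $n-1$ variables) shows the same for $\cP_{n-1,n+1}$. By Lemma~\ref{lem:affine-basis-norm-span}, $\cP_{n,n+1}$ is spanned by nonvanishing polynomials. So Proposition~\ref{prop:variable-lifting} applies. After normalization,
\[
\cF=\{f+\varphi(f):f\in V_0\},\qquad \varphi(f)=\Phi(c_0(f),c_1(f),\ldots,c_n(f)),
\]
where $c_0(f)=[x_1\cdots x_n]f$ and $c_i(f)=[x_ix_1\cdots x_n]f$; these are the only monomials of degree at most $n+1$ involving all $n$ variables. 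We also have $\Phi(\mathbf 0)=0$. It remains to show $\Phi\equiv0$.

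\textbf{Shift criterion.} As in the proof of Proposition~\ref{prop:quadratic}, suppose $N$ is nonvanishing with coefficient vector $\mathbf a=(c_0(N),\ldots,c_n(N))$ and $N(0)=\delta\neq0$. Pick $f\in V_0$ with coefficient vector $\mathbf z$, and consider the members $f+\Phi(\mathbf z)$ and $(f+N-\delta)+\Phi(\mathbf z+\mathbf a)$ of $\cF$. If $\Phi(\mathbf z+\mathbf a)-\Phi(\mathbf z)=\delta$, these differ by $N$, a contradiction. Hence, once we know that for a given $\mathbf a$ such $N$ exist for \emph{every} $\delta\in\F_q^\times$, we get $\Phi(\mathbf z+\mathbf a)=\Phi(\mathbf z)$ for all $\mathbf z$. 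If these vectors $\mathbf a$ span $\F_q^{n+1}$ and the set of them is closed under $\F_q^\times$-scaling, Lemma~\ref{lem:spanning-invariance} gives that $\Phi$ is constant, hence $0$.

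\textbf{Constructing $N$.} Take $N=\Nm(\alpha_0+\alpha_1x_1+\cdots+\alpha_nx_n)$ with $\Nm=\Nm_{\F_{q^{n+1}}/\F_q}$. Then $N(0)=\Nm(\alpha_0)$. The top coefficients $c_1,\ldots,c_n$ depend only on $\alpha_1,\ldots,\alpha_n$. Expanding the product as in \eqref{eq:coefficient-polynomial}, $c_0$ is $\F_q$-linear in $\alpha_0$: we have $c_0=T(\alpha_0)=\Tr(\beta\alpha_0)$, where $\beta$ is given by a Moore-type determinant in $\alpha_1,\ldots,\alpha_n$. The key claim is that when $\alpha_1,\ldots,\alpha_n$ are linearly independent, $\beta\neq0$ and $T$ vanishes on $U=\Span_{\F_q}\{\alpha_1,\ldots,\alpha_n\}$. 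Indeed, $\Tr(\beta\,\cdot)$ should be, up to a scalar, the functional with kernel $U$, since the Moore determinant of $(\alpha_0,\alpha_1,\ldots,\alpha_n)$ vanishes exactly when $\alpha_0\in U$.

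Granting this, fix independent $\alpha_1,\ldots,\alpha_n$, any $\tau\in\F_q^\times$ and any $\delta\in\F_q^\times$. Lemma~\ref{lem:prescribed-norm-linear} (with $m=n+1\geq3$ and $q\geq4$) gives $\alpha_0$ with $\Nm(\alpha_0)=\delta$ and $T(\alpha_0)=\tau$. Then $\alpha_0\notin U$, so $(\alpha_0,\ldots,\alpha_n)$ is a basis and $N$ is nonvanishing. Thus every vector $(\tau,c_1,\ldots,c_n)$ with $\tau\neq0$ arising from independent $\alpha_1,\ldots,\alpha_n$ is an admissible shift.

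To reach shifts with $\tau=0$, compose two shifts $(\tau,\mathbf c)$ and $(-\tau,\mathbf c')$. This also shows that the admissible vectors span $\F_q^{n+1}$, provided the vectors $(c_1,\ldots,c_n)$ coming from independent $(\alpha_1,\ldots,\alpha_n)$ span $\F_q^n$. That spanning statement I would prove by a Lemma~\ref{lem:grid}/Schwartz--Zippel argument as in Lemma~\ref{lem:affine-basis-norm-span}: each $c_i$ is a nonzero polynomial in the $\alpha_j$, and the $c_i$ are linearly independent because they have disjoint monomial supports, as in Lemma~\ref{lem:norm-coefficient-separation}. Scaling all $\alpha_j$ by an element of norm $\lambda$ gives closure under $\F_q^\times$.

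\textbf{Main obstacle.} The hardest step is the Moore-determinant identity: that $c_0(N)=\Tr(\beta\alpha_0)$ with $\beta\neq0$ vanishing against $U$. I would first try to verify it by expanding $\Nm(\alpha_0+\sum\alpha_jx_j)$ and recognizing the coefficient of $x_1\cdots x_n$ as $\sum_k\alpha_0^{q^k}$ times the permanent of the Frobenius matrix with row $k$ deleted. In characteristic $2$ this permanent equals the determinant, and the resulting expression is exactly the cofactor expansion of the Moore determinant along the $\alpha_0$ column.
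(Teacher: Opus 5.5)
Your proposal is correct and follows the paper's proof essentially step for step. The shared steps are: induction with degree and variable lifting; reduction to a function $\Phi$ of the coefficients of $x_1\cdots x_n$ and $x_i^2\prod_{j\neq i}x_j$; the forbidden-shift criterion; norm polynomials whose $x_1\cdots x_n$-coefficient is the Moore determinant, linear in $\alpha_0$; Lemma~\ref{lem:prescribed-norm-linear}; and Lemma~\ref{lem:spanning-invariance}.

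There are two minor differences, and neither affects correctness. First, the paper gets spanning of the vectors $(c_1,\ldots,c_n)$ directly by projecting the spanning family of Lemma~\ref{lem:affine-basis-norm-span} onto those coefficients, so your separate Schwartz--Zippel count is valid but unnecessary. Second, the paper obtains $\F_q\times\{0\}$ via $(t_1,\mathbf c)+(t_2,\mathbf c)=(t_1+t_2,\mathbf 0)$, which is the cleaner way to phrase your step of composing $(\tau,\mathbf c)$ with $(-\tau,\mathbf c')$, since $-\tau=\tau$ in characteristic $2$.
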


\begin{proof}
We proceed by induction on $n$. When $n=1$, this is
Theorem~\ref{thm:univariate}. Suppose $n\geq2$. By induction, every
maximum intersecting family in $\cP_{n-1,n}$ is a star, and
Proposition~\ref{prop:degree-lifting} gives the same conclusion in
$\cP_{n-1,n+1}$.

Let $\cF\subseteq\cP_{n,n+1}$ be a maximum intersecting family. By
Corollary~\ref{cor:standard-normalization}, we may assume that $ 0,x_1,\ldots,x_n\in\cF$. 
Put
\[
  V_0=\{f\in\cP_{n,n+1}:f(0)=0\}.
\]
The same corollary gives a function $\varphi\colon V_0\to\F_q$ such that
\begin{equation}\label{eq:critical-selector}
  \cF=\{f+\varphi(f):f\in V_0\},
  \qquad \varphi(0)=\varphi(x_1)=\cdots=\varphi(x_n)=0.
\end{equation}

\smallskip
\noindent\emph{Step 1: Reduction to monomials involving all variables.} Since $\cP_{n,n+1}$ is spanned by nonvanishing polynomials by
Lemma~\ref{lem:affine-basis-norm-span},
Proposition~\ref{prop:variable-lifting} shows that $\varphi(f)$ depends only
on the coefficients of the monomials involving all $n$ variables. The only such monomials of degree at most $n+1$ are
\[
  m_0=x_1\cdots x_n,
  \qquad
  m_i=x_i^2\prod_{j\neq i}x_j\quad(1\leq i\leq n).
\]
Define the surjective linear map
\[
\rho:\cP_{n,n+1}\to\F_q^{n+1},
  \qquad
  f \mapsto ([m_0]f,[m_1]f,\ldots,[m_n]f).
\]
Its restriction to $V_0$ is also surjective.
Thus, there is a function $\Phi\colon\F_q^{n+1}\to\F_q$, with
$\Phi(0)=0$, such that
\begin{equation}\label{eq:critical-residual-function}
  \varphi(f)=\Phi(\rho(f)), \qquad \forall f\in V_0.
\end{equation}

For a nonvanishing polynomial $H\in\cP_{n,n+1}$, we claim that
\begin{equation}\label{eq:critical-forbidden-value}
  \Phi(\mathbf z+\rho(H))-\Phi(\mathbf z)\neq H(0)
\end{equation}
for all $\mathbf z\in\F_q^{n+1}$.  Indeed, suppose to the contrary that $\Phi(\mathbf z+\rho(H))-\Phi(\mathbf z)=H(0)$. Define $h=H-H(0)\in V_0$ and choose $f\in V_0$ with $\rho(f)=\mathbf{z}$. By
\eqref{eq:critical-residual-function}, the two members $f+\varphi(f)=f+\Phi(\mathbf{z})$ and $f+h+\varphi(f+h)=f+H-H(0)+\Phi(\mathbf{z}+\rho(H))$ of $\cF$ would differ by
\[
  H-H(0)+\Phi(\mathbf z+\rho(H))-\Phi(\mathbf z)=H,
\]
contradicting the nonvanishing assumption on $H$.

\smallskip

\noindent\emph{Step 2: Using norm forms to control $\Phi$.}
Choose linearly independent
$\theta_1,\ldots,\theta_n\in\F_{q^{n+1}}$, write
$\boldsymbol\theta=(\theta_1,\ldots,\theta_n)$, and for each $\gamma\in\F_{q^{n+1}}$ set
\[
  Q_{\gamma,\boldsymbol\theta}(\mathbf x)
  =
  \Nm_{\F_{q^{n+1}}/\F_q}
  (\gamma+\theta_1x_1+\cdots+\theta_nx_n).
\]
For $1\leq i\leq n$, the coefficient $B_i(\boldsymbol\theta) =[m_i]Q_{\gamma,\boldsymbol\theta}$ is independent of $\gamma$; write
\[\mathbf B(\boldsymbol\theta)
=(B_1(\boldsymbol\theta),\ldots,B_n(\boldsymbol\theta)).
\]
The coefficient of $m_0$ is
\begin{equation}\label{eq:critical-Moore}
  \Delta_{\boldsymbol\theta}(\gamma)
  =
  \det
  \begin{pmatrix}
   \gamma&\theta_1&\cdots&\theta_n\\
   \gamma^q&\theta_1^q&\cdots&\theta_n^q\\
   \vdots&\vdots&&\vdots\\
   \gamma^{q^n}&\theta_1^{q^n}&\cdots&\theta_n^{q^n}
  \end{pmatrix}.
\end{equation}
Indeed, the coefficient is the corresponding permanent, which equals
the determinant in characteristic $2$. Hence $\Delta_{\boldsymbol\theta}:\F_{q^{n+1}}\to\F_q$ is a nonzero
$\F_q$-linear map, and the Moore determinant criterion \cite{Moo96} gives
\begin{equation}\label{eq:critical-Moore-kernel}
  \ker\Delta_{\boldsymbol\theta}
  =\Span_{\F_q}\{\theta_1,\ldots,\theta_n\}.
\end{equation}

Fix $\mathbf z\in\F_q^{n+1}$ and  $t\in\F_q^\times$. Let $a\in\F_q^\times$ be arbitrary.  Lemma~\ref{lem:prescribed-norm-linear} gives $\gamma\in\F_{q^{n+1}}$ such that
\[
  \Nm_{\F_{q^{n+1}}/\F_q}(\gamma)=a,
  \qquad
  \Delta_{\boldsymbol\theta}(\gamma)=t.
\]
Since $t\neq 0$, equation \eqref{eq:critical-Moore-kernel} gives $\gamma\notin\Span_{\F_q}\{\theta_1,\ldots,\theta_n\}$, so
$Q_{\gamma,\boldsymbol\theta}$ is nonvanishing. Moreover,
\[
Q_{\gamma,\boldsymbol\theta}(0)=a, \qquad \rho(Q_{\gamma,\boldsymbol\theta})=(t,\mathbf B(\boldsymbol\theta)).
\]
Let $H=Q_{\gamma,\boldsymbol{\theta}}$. Observe that $\Phi\bigl(\mathbf z+(t,\mathbf B(\boldsymbol\theta))\bigr)-\Phi(\mathbf z)\neq H(0)=a$ by \eqref{eq:critical-forbidden-value}. Since $a\in\F_{q}^{\times}$ was arbitrary, we must have
\begin{equation}\label{eq:critical-Phi-invariance}\Phi\bigl(\mathbf z+(t,\mathbf B(\boldsymbol\theta))\bigr)=\Phi(\mathbf z).
\end{equation}

\smallskip
\noindent \emph{Step 3: Showing that $\Phi$ is constant.} Let $\cD$ be the set of all $(t,\mathbf B(\boldsymbol\theta))$ with $t\in\F_q^\times$ and
$\theta_1,\ldots,\theta_n$ linearly independent. Equation~\eqref{eq:critical-Phi-invariance} says that $\Phi$ is invariant under translation by every element of $\cD$. Note that the set $\cD$ is closed under multiplication by nonzero scalars: if $\lambda\in\F_q^\times$, choose $\xi\in\F_{q^{n+1}}^\times$ with $\Nm_{\F_{q^{n+1}}/\F_q}(\xi)=\lambda$ and replace each $\theta_i$ by $\xi\theta_i$.

Finally, we claim that $\cD$ spans $\F_q^{n+1}$. Fix $\boldsymbol\theta$ and $c\in\F_q^\times$. Choose
$t_1\in\F_q\setminus\{0,c\}$ and put $t_2=c+t_1$. Since $q\geq4$,
both $t_1$ and $t_2$ are nonzero, and we have
\[
(t_1,\mathbf B(\boldsymbol\theta))+(t_2,\mathbf B(\boldsymbol\theta))=(c,0).
\]
Thus, $\F_q\times\{0\}$ lies in the span of $\cD$, and hence so does $(0,\mathbf B(\boldsymbol\theta))$. Projecting the spanning family in Lemma~\ref{lem:affine-basis-norm-span}
onto the coefficients of $m_1,\ldots,m_n$ shows that the vectors $\mathbf B(\boldsymbol\theta)$ span $\F_q^n$. Therefore, $\cD$ spans $\F_q^{n+1}$. Lemma~\ref{lem:spanning-invariance} now implies that $\Phi$ is constant. Since $\Phi(0)=0$, we have $\Phi=0$. We conclude that $\cF = \{f\in\cP_{n,n+1}:f(0)=0\}$ is a star.
\end{proof}

\begin{thm}
\label{thm:even-supercritical}
Let $q\geq4$ be even, let $n\geq1$, and let $d>n$. Then every maximum intersecting family in $\cP_{n,d}$ is a star.
\end{thm}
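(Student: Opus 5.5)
The plan is induction on $d$, starting at $d=n+1$, with Proposition~\ref{prop:critical-even} as the base case. Since $q\geq4$ is even and every degree $d>n$ is in play, the hypothesis ``$q$ is odd or $d>n$'' of Proposition~\ref{prop:degree-lifting} holds at each degree $d\geq n+2$.

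For the inductive step, suppose every maximum intersecting family in $\cP_{n,d-1}$ is a star, where $d-1\geq n+1$. Then $d\geq 2$ and $d>n$, so Proposition~\ref{prop:degree-lifting} applies directly and shows that every maximum intersecting family in $\cP_{n,d}$ is a star. Iterating from $d=n+1$ covers all $d>n$.

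The only point to check is the case $n=1$. There the base case $d=2$ comes from Theorem~\ref{thm:univariate}, which is already used inside Proposition~\ref{prop:critical-even} and needs only $q\geq3$. No step here is a real obstacle; all the substantive work lives in Proposition~\ref{prop:critical-even} and in the degree-lifting argument (Lemma~\ref{lem:nonvanishing-completion} together with Proposition~\ref{prop:proper-norm-generation}, which for $d>n$ holds because every norm form is proper).
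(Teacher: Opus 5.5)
Your proposal is correct and is exactly the paper's argument. The base case $d=n+1$ is Proposition~\ref{prop:critical-even}, and every larger degree follows by iterating Proposition~\ref{prop:degree-lifting}, whose hypothesis $d>n$ holds at each step.
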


\begin{proof}
The base case $d=n+1$ is Proposition~\ref{prop:critical-even}; every larger degree follows successively from
Proposition~\ref{prop:degree-lifting}.
\end{proof}

\subsection{Even characteristic: \texorpdfstring{$d\leq n$}{d <= n}}
\label{sec:even-exceptional-classification}

We now classify all maximum intersecting families when $q\geq4$ is even and $2\leq d\leq n$. Put $\cM_{n,d}=\F_q^{\binom{n}{d}}$ and define the $\F_q$-linear map
\[  
\Omega_{n,d} \colon \cP_{n,d}\to\cM_{n,d},
  \qquad f \mapsto \bigl([x_I]f\bigr)_{I\in\binom{[n]}{d}}.
\]
Write
\[
\mathcal K_{n,d}=\ker\Omega_{n,d} \subseteq \cP_{n,d}.
\]

\begin{lem}
\label{lem:even-distinct-fibers}
Let $q\geq 4$ be even and let $2\leq d \leq n$. 
If $f,g\in\cP_{n,d}$ and $\Omega_{n,d}(f)\neq\Omega_{n,d}(g)$, then $f$ and
$g$ intersect.
\end{lem}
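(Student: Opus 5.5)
The plan is to reduce the statement to Lemma~\ref{lem:parity} applied to the difference polynomial. Set $h=f-g\in\cP_{n,d}$. Since $\Omega_{n,d}$ is linear, $\Omega_{n,d}(h)\neq0$. Hence there is a subset $I\in\binom{[n]}{d}$ with $[x_I]h\neq0$. The polynomials $f$ and $g$ intersect exactly when $h$ has a zero in $\F_q^n$, so it suffices to produce such a zero.

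To find it, restrict $h$ to the coordinate subspace on which $x_j=0$ for all $j\notin I$. This gives a polynomial $\tilde h$ in the $d$ variables $(x_i)_{i\in I}$ with $\deg\tilde h\leq\deg h\leq d$.

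The main point to check is that the coefficient of $x_I$ survives the restriction, that is, $[x_I]\tilde h=[x_I]h$. Setting variables outside $I$ to zero kills every monomial that involves such a variable. It leaves unchanged the coefficients of monomials supported inside $I$, and no two distinct monomials of $h$ merge. So $[x_I]\tilde h=[x_I]h\neq0$.

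With this in hand, Lemma~\ref{lem:parity} applies, with $q=2^r$ and $d$ variables. It gives $Z(\tilde h)\equiv([x_I]h)^{q-1}=1\pmod 2$, so $\tilde h$ has a zero $\mathbf c\in\F_q^I$. Extend $\mathbf c$ by zeros to a point $\mathbf a\in\F_q^n$. Then $h(\mathbf a)=\tilde h(\mathbf c)=0$, so $f(\mathbf a)=g(\mathbf a)$.

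This is exactly the argument already used for the inclusion $W_{n,d}\subseteq K$ in the proof of Theorem~\ref{thm:span}(2), so no real obstacle is expected. The only care needed is the bookkeeping of degrees and coefficients under the restriction, done above. Note that the hypothesis $q\geq4$ is not actually needed for this step.
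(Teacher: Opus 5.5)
Your proposal is correct and is essentially the paper's own proof: pick $I$ with $[x_I](f-g)\neq0$, set $x_j=0$ for $j\notin I$, and apply Lemma~\ref{lem:parity} to the restricted polynomial, then extend the resulting zero by zeros. Your side remark is also right: the parity lemma holds for every $q=2^r$, so the hypothesis $q\geq4$ is not needed for this particular step.
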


\begin{proof}
Choose $I\in\binom{[n]}{d}$ such that $[x_I](f-g)\neq0$, and set $x_j=0$ for all $j\notin I$. The resulting polynomial has
degree at most $d$ in the $d$ variables indexed by $I$, with nonzero $x_I$-coefficient. Lemma~\ref{lem:parity} gives a zero of this
restricted polynomial, which extends to a zero of $f-g$ in $\F_q^n$.
\end{proof}

\begin{prop}
\label{prop:even-fiber-rigidity}
Let $q\geq4$ be even and let $2\leq d\leq n$. Every maximum
intersecting family in $\mathcal K_{n,d}$ is a star.
\end{prop}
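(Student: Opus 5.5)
We prove that every maximum intersecting family in $\mathcal K_{n,d}$ is a star by induction on $d$, with base case $d=2$ treated separately. The idea follows Proposition~\ref{prop:degree-lifting}: slice along a smaller space, then glue the slices using nonvanishing polynomials.

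\emph{Setup.} Let $U=\cP_{n,d-1}\subseteq\mathcal K_{n,d}$. The space $\mathcal K_{n,d}$ is the span of $U$ together with the degree-$d$ monomials supported on at most $d-1$ variables. It contains the constants and the coordinates, and it is invariant under translation of the variables. Translation can create new squarefree degree-$d$ terms only from degree-$d$ monomials with full support, and these are absent, so $\mathcal K_{n,d}$ is preserved. By Theorem~\ref{thm:span}(2), $\mathcal K_{n,d}=W_{n,d}$, so $\mathcal K_{n,d}$ is spanned by nonvanishing polynomials.

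\emph{Inductive step ($d\geq 3$).} Assume every maximum intersecting family in $\cP_{n,d-1}$ is a star. Here $d-1\geq 2$: this is the induction hypothesis if $d-1\leq n$ via the present proposition, and the general classification in the other regimes. This assumption is delicate, because $\cP_{n,d-1}$ itself is \emph{not} strict EKR when $d-1\leq n$. So instead I would take $U=\mathcal K_{n,d-1}\oplus(\text{nothing})$; a cleaner choice may be to slice by variable subsets, as explained below.

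\emph{Preferred route: variable slicing.} For each $S\in\binom{[n]}{d-1}$, let $U_S$ be the space of polynomials in the variables indexed by $S$ of degree at most $d$. Then $U_S\cong\cP_{d-1,d}$, which has the strict EKR property by Theorem~\ref{thm:even-supercritical} (case $d>n$ with $n=d-1$). Moreover, $\mathcal K_{n,d}$ is spanned by the $U_S$ and is spanned by nonvanishing polynomials.

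Next, normalize using Corollary~\ref{cor:standard-normalization} so that $0,x_1,\dots,x_n\in\cF$. Apply Lemma~\ref{lem:star-slice-synchronization} with $V=\mathcal K_{n,d}$ and $U=U_S$. The slice $\cF\cap U_S$ contains $0$ and $x_j$ for $j\in S$, so its in-center is the origin. Hence $\varphi(f+u)=\varphi(f)$ for all $u\in U_S\cap V_0$.

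Every monomial of $\mathcal K_{n,d}$ involves at most $d-1$ variables and therefore lies in some $U_S$. Consequently $V_0=\sum_S(U_S\cap V_0)$, and $\varphi$ is constant. Since $\varphi(0)=0$, we get $\varphi=0$, so $\cF=\{f\in\mathcal K_{n,d}:f(0)=0\}$ is a star. This argument works for every $d\geq 2$, since $U_S\cong\cP_{d-1,d}$ with $d>d-1$. No induction on $d$ is needed after all, and the setup above suffices.

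\emph{Main obstacle.} The key step is verifying the hypotheses of Lemma~\ref{lem:star-slice-synchronization}. That lemma requires $V$ to be spanned by $U$ together with nonvanishing polynomials lying in $V$, which holds here by Theorem~\ref{thm:span}(2). It also identifies $U$ with some $\cP_{m,D}$ on a chosen set of variables, which holds with $m=d-1\geq1$ and $D=d$.

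One must also check that the normalization in Corollary~\ref{cor:standard-normalization} is legitimate inside $\mathcal K_{n,d}$; this is the translation invariance verified in the setup. Finally, the in-center of each slice is determined within $\F_q^{S}$, so the slices need not agree with one another. This does not matter: for each $S$ separately, the in-center is the origin because $0$ and $x_j$ ($j\in S$) lie in the slice, and each $U_S$ is treated individually.
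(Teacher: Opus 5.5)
Your final ``variable slicing'' argument is correct and is essentially the paper's proof. You normalize via Corollary~\ref{cor:standard-normalization}, apply Lemma~\ref{lem:star-slice-synchronization} with $U=U_S\cong\cP_{d-1,d}$ (strict EKR by Theorem~\ref{thm:even-supercritical}) inside $V=\mathcal K_{n,d}$ (spanned by nonvanishing polynomials by Theorem~\ref{thm:span}(2)), and then kill $\varphi$ monomial by monomial; the abandoned induction-on-$d$ paragraph is unnecessary and should simply be deleted.
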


\begin{proof}
Let $\cA$ be a maximum intersecting family in $\mathcal K_{n,d}$. Translation of the variables leaves the degree-$d$ homogeneous part unchanged, so $\mathcal K_{n,d}$ is translation invariant. By Corollary~\ref{cor:standard-normalization}, we may assume that $0,x_1,\ldots,x_n\in\cA$. Writing
\[
V_0=\{f\in\mathcal K_{n,d}:f(0)=0\},
\]
the same corollary gives a function
$\varphi\colon V_0\to\F_q$ such that
\[
\cA=\{f+\varphi(f):f\in V_0\},
  \qquad \varphi(0)=\varphi(x_1)=\cdots=\varphi(x_n)=0.
\]

For $S\in\binom{[n]}{d-1}$, let $U_S$ be the space of polynomials of degree at most $d$ involving only the variables $x_i$ with $i\in S$. Then $U_S\cong\cP_{d-1,d}$ and $U_S\subseteq\mathcal K_{n,d}$. By Theorem~\ref{thm:span} (2), $\mathcal K_{n,d}$ is spanned
by nonvanishing polynomials, while Theorem~\ref{thm:even-supercritical} shows that every maximum intersecting family in $U_S$ is a star. Hence Lemma~\ref{lem:star-slice-synchronization} applies. Since $\cA\cap U_S$ contains $0$ and every $x_i$ with $i\in S$, its in-center is the origin, and therefore
\[
\varphi(f+R)=\varphi(f)
  \qquad
  \forall f\in V_0,\ R\in U_S \ \text{with}\ R(0)=0.
\]
Every monomial term of $f\in V_0$ involves at most $d-1$ variables: a degree-$d$ monomial involving $d$ variables would be squarefree, and its coefficient vanishes by the definition of $\mathcal K_{n,d}$. Thus, every monomial term belongs to some $U_S$. Removing them one at a time gives 
$\varphi(f)=\varphi(0)=0$. Hence $\varphi=0$, and $\cA=\{f\in\mathcal K_{n,d}:f(0)=0\}=V_0$. 
\end{proof}

\begin{prop}
\label{prop:even-exceptional-classification}
Let $q\geq4$ be even and let $2\leq d\leq n$. For each
$\omega\in\cM_{n,d}$, choose a pair $(\mathbf s_\omega,c_\omega)\in \F_q^n\times \F_q$, and set $\mathbf{s}=(\mathbf{s}_{\omega})_{\omega\in \cM_{n,d}}$ and $c=(c_{\omega})_{\omega\in \cM_{n,d}}$. Then
\begin{equation}\label{eq:even-exceptional-family}
  \cF_{\mathbf s,c}
  =\bigcup_{\omega\in\cM_{n,d}}
    \{f\in\cP_{n,d}:\Omega_{n,d}(f)=\omega,
       \ f(\mathbf s_\omega)=c_\omega\}
\end{equation}
is a maximum intersecting family. Conversely, every maximum
intersecting family in $\cP_{n,d}$ has a unique representation of the
form \eqref{eq:even-exceptional-family}.
\end{prop}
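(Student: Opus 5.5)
The plan has two halves: showing that each $\cF_{\mathbf s,c}$ is a maximum intersecting family, and showing that every maximum intersecting family has this form, uniquely. In both halves we work fiber by fiber along $\Omega_{n,d}$. The fibers of $\Omega_{n,d}$ are the cosets $C_\omega=\Omega_{n,d}^{-1}(\omega)$ of $\mathcal K_{n,d}$. Since $\Omega_{n,d}$ is surjective, there are $|\cM_{n,d}|=q^{\binom nd}$ such cosets, each of size $|\mathcal K_{n,d}|$.

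\emph{$\cF_{\mathbf s,c}$ is a maximum intersecting family.} Take $f,g\in\cF_{\mathbf s,c}$.
\begin{enumerate}[(1)]
\item If they lie in the same fiber $C_\omega$, then $f(\mathbf s_\omega)=g(\mathbf s_\omega)=c_\omega$, so they intersect at $\mathbf s_\omega$.
\item If they lie in different fibers, then $\Omega_{n,d}(f)\neq\Omega_{n,d}(g)$, and Lemma~\ref{lem:even-distinct-fibers} shows that they intersect.
\end{enumerate}
For the size, each piece $\{f\in C_\omega: f(\mathbf s_\omega)=c_\omega\}$ is a star in the coset $C_\omega$. Since $C_\omega$ is a coset of a space containing the constants, this piece has $|\mathcal K_{n,d}|/q$ elements. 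Summing over the $q^{\binom nd}$ fibers gives $|\cP_{n,d}|/q$, which is the maximum possible size.

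\emph{Every maximum family has this form.} Let $\cF$ be a maximum intersecting family. Apply the final statement of Lemma~\ref{lem:transversal} with $V=\cP_{n,d}$ and $U=\mathcal K_{n,d}$. It gives $|\cF\cap C_\omega|=|\mathcal K_{n,d}|/q$ for each $\omega$, so each slice $\cF\cap C_\omega$ is a maximum intersecting family in $C_\omega$. Now pick any $g_\omega\in C_\omega$ and translate the slice: $(\cF\cap C_\omega)-g_\omega$ is a maximum intersecting family in $\mathcal K_{n,d}$. By Proposition~\ref{prop:even-fiber-rigidity} it is a star $\{u\in\mathcal K_{n,d}:u(\mathbf s_\omega)=b\}$. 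Translating back, the slice is $\{f\in C_\omega: f(\mathbf s_\omega)=b+g_\omega(\mathbf s_\omega)\}$. Setting $c_\omega=b+g_\omega(\mathbf s_\omega)$, the slice is a star with parameters $(\mathbf s_\omega,c_\omega)$. Taking the union over all $\omega$ gives exactly the representation \eqref{eq:even-exceptional-family}.

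\emph{Uniqueness.} The fibers partition $\cP_{n,d}$, so the slices $\cF\cap C_\omega$ are determined by $\cF$. It therefore suffices to show that a star in a coset $C_\omega$ determines its pair $(\mathbf s,c)$. After translating into $\mathcal K_{n,d}$, suppose two stars $\{u\in\mathcal K_{n,d}:u(\mathbf s)=c\}$ and $\{u\in\mathcal K_{n,d}:u(\mathbf s')=c'\}$ coincide.
\begin{enumerate}[(1)]
\item If $\mathbf s\neq\mathbf s'$, some coordinate satisfies $s_i\neq s'_i$, and $x_i\in\mathcal K_{n,d}$ because $d\geq 2$. Adding a suitable multiple of $x_i$ to a member of the star keeps its value at $\mathbf s'$ while changing its value at $\mathbf s$, so the stars differ. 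Hence $\mathbf s=\mathbf s'$.
\item Once $\mathbf s=\mathbf s'$, the constant $c$ is the common value at $\mathbf s$ of all members of the star, so $c=c'$.
\end{enumerate}

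The argument is essentially bookkeeping on top of the earlier results. The only real content, Proposition~\ref{prop:even-fiber-rigidity} and Lemma~\ref{lem:even-distinct-fibers}, is already established.
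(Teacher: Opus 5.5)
Your proposal is correct and follows the paper's proof essentially step for step. It uses the same fiberwise decomposition along $\Omega_{n,d}$: Lemma~\ref{lem:even-distinct-fibers} handles pairs in different fibers, Lemma~\ref{lem:transversal} together with Proposition~\ref{prop:even-fiber-rigidity} identifies each slice as a star, and uniqueness comes from separating two distinct in-centers with an affine-linear polynomial.

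There is one small slip in the uniqueness step. Adding $\lambda x_i$ shifts the value at $\mathbf s'$ by $\lambda s'_i$, so it leaves that value unchanged only when $s'_i=0$. Add $\lambda(x_i-s'_i)\in\mathcal K_{n,d}$ instead (recall $\mathcal K_{n,d}$ contains the constants). This keeps the value at $\mathbf s'$ fixed and shifts the value at $\mathbf s$ by $\lambda(s_i-s'_i)\neq0$, which gives the contradiction you want.
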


\begin{proof}
First consider a family of the form
\eqref{eq:even-exceptional-family}. Two members in the same
$\Omega_{n,d}$-fiber agree at the point chosen for that fiber, while two
members in different fibers intersect by
Lemma~\ref{lem:even-distinct-fibers}. Thus the family is intersecting.
The map $\Omega_{n,d}$ is surjective. Since $\mathcal K_{n,d}$ contains
the constants, the evaluation condition selects exactly
$|\mathcal K_{n,d}|/q$ polynomials from each fiber. Therefore
\[
  |\cF_{\mathbf s,c}|
  =|\cM_{n,d}|\frac{|\mathcal K_{n,d}|}{q}
  =\frac{|\cP_{n,d}|}{q},
\]
so the family is maximum.

Conversely, let $\cF\subseteq\cP_{n,d}$ be a maximum intersecting family, and
put
\[
  \cF(\omega)=\{f\in\cF:\Omega_{n,d}(f)=\omega\},  \qquad \forall \omega\in\cM_{n,d}.
\]
By Lemma~\ref{lem:transversal},
every $\cF(\omega)$ has size
$|\mathcal K_{n,d}|/q$. Choose
$g_\omega\in\Omega_{n,d}^{-1}(\omega)$. Applying
Proposition~\ref{prop:even-fiber-rigidity} to
$\cF(\omega)-g_\omega$ and translating back gives
$\mathbf s_\omega\in \F_q^n$ and $c_\omega\in \F_q$ such that
\[
  \cF(\omega)
  =\{f\in\cP_{n,d}:\Omega_{n,d}(f)=\omega,
       \ f(\mathbf s_\omega)=c_\omega\}.
\]
Taking the union over $\omega$ gives
\eqref{eq:even-exceptional-family}.

It remains to prove uniqueness of the representation. Fix a fiber $\Omega_{n,d}^{-1}(\omega)$ and suppose that
\[
  \{f\in \Omega_{n,d}^{-1}(\omega):f(\mathbf{a})=b\}
  =\{f\in \Omega_{n,d}^{-1}(\omega):f(\mathbf{a}')=b'\}.
\]
Suppose $\mathbf{a}\neq \mathbf{a}'$. Choose $h\in\cP_{n,d}$ with $\Omega_{n,d}(h)=\omega$. Consider the element $f=g+h$ where $g\in\cP_{n,1}$ such that $g(\mathbf{a})=b-h(\mathbf{a})$ and $g(\mathbf{a}') = b'-h(\mathbf{a}')+1$. Observe that $f\in \Omega_{n,d}^{-1}(\omega)$ satisfies $f(\mathbf{a})=b$ but $f(\mathbf{a}')=b'+1$, a contradiction. 

Thus, $\mathbf{a}=\mathbf{a}'$ and $b=b'$. This proves the uniqueness of the
pair $(\mathbf{a},b)$ on each fiber $\Omega_{n,d}^{-1}(\omega)$.
\end{proof}

\begin{cor}\label{cor:even-exceptional-stars}
A family in \eqref{eq:even-exceptional-family} is a star if and only if the map $\omega\mapsto(\mathbf s_\omega,c_\omega)$ is constant.
\end{cor}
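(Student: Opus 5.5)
The plan is to prove the two directions separately. For the easy direction, suppose $\omega\mapsto(\mathbf s_\omega,c_\omega)$ is constant, say equal to $(\mathbf s,c)$. Then \eqref{eq:even-exceptional-family} is the union over all fibers of $\{f:\Omega_{n,d}(f)=\omega,\ f(\mathbf s)=c\}$. Since the fibers of $\Omega_{n,d}$ partition $\cP_{n,d}$, this union is exactly $\{f\in\cP_{n,d}:f(\mathbf s)=c\}$, which is a star.

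For the converse, suppose $\cF_{\mathbf s,c}=\{f\in\cP_{n,d}:f(\mathbf a)=b\}$ for some $(\mathbf a,b)$. Intersecting both sides with a fixed fiber $\Omega_{n,d}^{-1}(\omega)$ gives
\[
\{f\in\Omega_{n,d}^{-1}(\omega):f(\mathbf s_\omega)=c_\omega\}=\{f\in\Omega_{n,d}^{-1}(\omega):f(\mathbf a)=b\}.
\]
The left side is the $\omega$-part of \eqref{eq:even-exceptional-family}, because the fibers are disjoint. Now invoke the uniqueness argument from the last paragraph of the proof of Proposition~\ref{prop:even-exceptional-classification}. That argument shows a star inside a single fiber determines its pair $(\mathbf a,b)$: if $\mathbf a\neq\mathbf a'$, one adds an affine-linear polynomial to a fixed element of the fiber to separate the two conditions, and this does not change $\Omega_{n,d}$ since $d\geq2$. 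Applying it gives $(\mathbf s_\omega,c_\omega)=(\mathbf a,b)$ for every $\omega$, so the map is constant.

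No real obstacle is expected. The one point to check is that the affine-linear correction used in the uniqueness argument stays in the fiber, which holds because $\Omega_{n,d}$ only reads coefficients of degree-$d$ squarefree monomials and $d\geq2$.
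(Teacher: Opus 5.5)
Your proposal is correct and follows essentially the same argument as the paper. A constant choice reassembles the fibers into a single star; conversely, a star restricts to every $\Omega_{n,d}$-fiber with the same pair, so the uniqueness part of Proposition~\ref{prop:even-exceptional-classification} forces $(\mathbf s_\omega,c_\omega)$ to be constant, and your check that the affine-linear correction stays in the fiber (since $d\geq2$) is exactly what that uniqueness step needs.
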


\begin{proof}
A constant choice gives a star. Conversely, if the family is the star
$\{f\in\cP_{n,d}:f(\mathbf s)=c\}$, then its intersection with every
$\Omega_{n,d}$-fiber is described by the same pair $(\mathbf s,c)$.
The uniqueness in Proposition~\ref{prop:even-exceptional-classification} forces
$(\mathbf s_\omega,c_\omega)=(\mathbf s,c)$ for every $\omega$.
\end{proof}

\begin{thm}
\label{thm:even-exceptional}
Let $q\geq4$ be even and let $2\leq d\leq n$. Then
$\cP_{n,d}$ does not have the strict EKR property.
\end{thm}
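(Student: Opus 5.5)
We will exhibit a maximum intersecting family in $\cP_{n,d}$ that is not a star, using Proposition~\ref{prop:even-exceptional-classification} together with Corollary~\ref{cor:even-exceptional-stars}.

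\textbf{Construction.} Since $2\leq d\leq n$, the index set $\binom{[n]}{d}$ is nonempty. Hence $\cM_{n,d}=\F_q^{\binom{n}{d}}$ has at least $q\geq 4$ elements. Fix two distinct elements $\omega_0\neq\omega_1$ of $\cM_{n,d}$, and fix two distinct points $\mathbf s\neq\mathbf s'$ of $\F_q^n$; these exist because $n\geq 2$. Set
\[
(\mathbf s_{\omega_1},c_{\omega_1})=(\mathbf s',0),
\qquad
(\mathbf s_\omega,c_\omega)=(\mathbf s,0)\quad\text{for all }\omega\neq\omega_1 .
\]

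\textbf{The family is maximum but not a star.} By the first part of Proposition~\ref{prop:even-exceptional-classification}, the family $\cF_{\mathbf s,c}$ of \eqref{eq:even-exceptional-family} is a maximum intersecting family. The map $\omega\mapsto(\mathbf s_\omega,c_\omega)$ is not constant, so Corollary~\ref{cor:even-exceptional-stars} shows that $\cF_{\mathbf s,c}$ is not a star. Therefore $\cP_{n,d}$ fails the strict EKR property.

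\textbf{Where the content lies.} There is essentially no obstacle left, since the intersecting property across different fibers (Lemma~\ref{lem:even-distinct-fibers}) and the uniqueness of the per-fiber star description were already established. The only point to check is that the corollary applies. It relies on the uniqueness statement in Proposition~\ref{prop:even-exceptional-classification}, and this holds because each fiber $\Omega_{n,d}^{-1}(\omega)$ contains all affine-linear perturbations of any of its members. A direct alternative is also available: pick $f$ in the fiber $\Omega_{n,d}^{-1}(\omega_1)$ vanishing at $\mathbf s'$ but not at $\mathbf s$, and $g$ in the fiber $\Omega_{n,d}^{-1}(\omega_0)$ vanishing at $\mathbf s$ but not at $\mathbf s'$. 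Both lie in $\cF_{\mathbf s,c}$, and no star $\{f(\mathbf a)=b\}$ can contain all members of both fibers.
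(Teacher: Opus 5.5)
Your proof is correct and is the paper's own argument: choose a nonconstant assignment $\omega\mapsto(\mathbf s_\omega,c_\omega)$, obtain maximality from Proposition~\ref{prop:even-exceptional-classification}, and obtain non-star-ness from Corollary~\ref{cor:even-exceptional-stars}. Two small quibbles. First, distinct points of $\F_q^n$ exist already for $n\geq1$, so $n\geq 2$ is not the reason. Second, your ``direct alternative'' is not self-contained as written: the two sample members $f,g$ could both lie in a star centered at some third point, so ruling out a star requires comparing the full fiber slices via the uniqueness argument, which is just the corollary again.
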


\begin{proof}
Since $|\cM_{n,d}|>1$, choose a nonconstant function 
$\omega\mapsto (\mathbf s_\omega,c_\omega)$. Proposition~\ref{prop:even-exceptional-classification}
gives a maximum intersecting family, and
Corollary~\ref{cor:even-exceptional-stars} shows that it is not a star.
\end{proof}

\begin{rem}\label{rem:even-exceptional-count}
By Proposition~\ref{prop:even-exceptional-classification}, there are $q^{(n+1)q^{\binom{n}{d}}}$ maximum intersecting families in this setting, compared to only $q^{n+1}$ stars.
\end{rem}

Now we complete the proof of Theorem~\ref{thm:main}(1).

\begin{proof}[Proof of Theorem~\ref{thm:main}(1)]
Proposition~\ref{prop:degree-one} settles
$d=1$. Next assume $d\geq2$. If $q=2$, the classification is
Theorem~\ref{thm:F2-pairwise}. If $q$ is odd, it is Theorem~\ref{thm:odd-pairwise}. Finally, if $q\geq4$ is even, Theorem~\ref{thm:even-supercritical} gives the star conclusion for $d>n$, while Theorem~\ref{thm:even-exceptional} gives maximum non-star families for $2\leq d\leq n$. 
\end{proof}

\section{Zero-detecting functionals}\label{sec:GKZ}

We finish the paper by establishing a GKZ-type theorem over finite fields.

\begin{proof}[Proof of Theorem~\ref{thm:zero-detecting}]
For the first implication, suppose that $\Psi$ is zero-detecting.

\emph{Part 1.} Assume that $q\geq 3$, and put $\cA=\ker\Psi$. Since $\Psi(1)=1$, the functional $\Psi$ is nonzero, so $|\cA|=\frac{|\cP_{n,d}|}{q}$. If $f,g\in\cA$, then $f-g\in\cA$ and $f-g$ has a zero by our hypothesis on $\Psi$. Thus, $\cA$ is a maximum intersecting family.

If $q$ is odd or $d>n$, Theorem~\ref{thm:main} shows that $\cA$ is a
star. Since $0\in\cA$, there is $\mathbf a\in\F_q^n$ such that
\[
  \cA=\{f\in\cP_{n,d}:f(\mathbf a)=0\}.
\]
The functionals $\Psi$ and $\operatorname{ev}_{\mathbf a}$ have the
same kernel. It follows that they are proportional, and their common value $1$ at the constant polynomial $1$ shows that $ \Psi=\operatorname{ev}_{\mathbf a}$.

\emph{Part 2.} Now suppose that $q\geq4$ is even and $d\leq n$, and put $\mathcal K_{n,d}=\ker\Omega_{n,d}$. Since $1\in\mathcal K_{n,d}$, the restriction $\Psi|_{\mathcal K_{n,d}}$ is nonzero. Its kernel is a maximum intersecting family in $\mathcal K_{n,d}$, so Proposition~\ref{prop:even-fiber-rigidity} gives $\mathbf a\in\F_q^n$ such that
\[
  \ker(\Psi|_{\mathcal K_{n,d}})
  =\{f\in\mathcal K_{n,d}:f(\mathbf a)=0\}.
\]
The two functionals $\Psi|_{\mathcal K_{n,d}}$ and  $\operatorname{ev}_{\mathbf a}|_{\mathcal K_{n,d}}$ have the same kernel and both take the value $1$ at the constant
polynomial $1$. Hence $\Psi|_{\mathcal K_{n,d}}=\operatorname{ev}_{\mathbf a}|_{\mathcal K_{n,d}}$. For every $f\in\cP_{n,d}$,
\[
f-\sum_{I\in\binom{[n]}d}([x_I]f)\,x_I
\in\ker\Omega_{n,d}.
\]
Since
$\Psi-\operatorname{ev}_{\mathbf a}$ vanishes on this kernel, setting
$\mu_I=\Psi(x_I)-x_I(\mathbf a)$ gives
\[
  \Psi(f)
  =f(\mathbf a)
   +\sum_{I\in\binom{[n]}{d}}\mu_I\cdot[x_I]f.
\]

\emph{Part 3.} Let $q=2$, and put $\cA=\ker\Psi$. Since $\Psi(1)=1$,
the family $\cA$ has size $|\cP_{n,d}|/2$, and the zero-detecting
property makes it intersecting, as in Part~1. Proposition~\ref{prop:F2-pairwise}
shows that either $\Lambda_{0}\subseteq \cA$ or $\Lambda_{1}\subseteq \cA$. Since $\ker(\Psi)\cap \Lambda_{1}=\emptyset$, the only choice is $\Lambda_{0}\subseteq \cA$.

If two polynomials $f$ and $g$ agree at every point of $\F_2^n$, then $f-g\in \Lambda_{0}$, so $\Psi(f)=\Psi(g)$. Consequently, $\Psi$ descends to a linear functional on the space of value vectors $\bigl(f(\mathbf a)\bigr)_{\mathbf a\in\F_2^n}$ arising from polynomials. Choose a basis of this space, extend it to a basis of all $\F_2$-valued vectors indexed by $\F_2^n$, and declare the functional to be zero on the added basis vectors. The resulting functional is a linear combination of the coordinate maps, so there are coefficients $c_{\mathbf a}\in\F_2$ such that
\[
\Psi(f)=\sum_{\mathbf a\in\F_2^n}c_{\mathbf a}f(\mathbf a).
\]
Taking $S=\{\mathbf a\in\F_2^n:c_{\mathbf a}=1\}$ gives $\Psi(f)=\sum_{\mathbf a\in S}f(\mathbf a)$.  Evaluating at the constant polynomial $1$ gives $1=\Psi(1)=|S|\pmod2$, so $|S|$ is odd.

\smallskip

For the converse, a point evaluation is plainly zero-detecting. Suppose that $\Psi$ has the form in~{\rm(2)} and that $\Psi(f)=0$. If $\Omega_{n,d}(f)=0$, then $f(\mathbf a)=0$. If $\Omega_{n,d}(f)\neq0$, Lemma~\ref{lem:even-distinct-fibers}, applied to
$f$ and the zero polynomial, gives a zero of $f$.

Suppose instead that $\Psi$ has the form in~{\rm(3)}, where $|S|$ is odd. If $f$ had no zero in $\F_2^n$, then $f(\mathbf a)=1$ for every $\mathbf a\in\F_2^n$, and hence $\Psi(f)=\sum_{\mathbf a\in S}1=1$. Thus, $\Psi(f)=0$ forces $f$ to have a zero.
\end{proof}

\section*{Acknowledgments}
The authors thank Santa Clara University and E\"otv\"os Lor\'and University for their hospitality; much of this work was carried out during visits to these institutions. We thank Karen Meagher for helpful discussions. The research of the third author was supported in
part by an NSERC fellowship. 

\section*{AI disclosure}

During the preparation of this paper, the authors used AI tools interactively as an aid in discussing, refining, and presenting parts of the manuscript. The mathematical ideas, underlying intuition, and overall direction are due to the authors. The authors are solely responsible for the final content of the paper.

\bibliographystyle{abbrv}
\bibliography{main}

\end{document}